\theoremstyle{plain}
\newtheorem{theorem}{Theorem}
\newtheorem{lemma}{Lemma}
\newtheorem{proposition}{Proposition}
\newtheorem{corollary}{Corollary}
\newtheorem*{maintheorem}{Main theorem}
\theoremstyle{definition}
\newtheorem{example}{Example}
\newtheorem{definition}{Definition}
\newtheorem{remark}{Remark}
\newcommand{\al}{\alpha}
\newcommand{\de}{\delta}
\newcommand{\De}{\Delta}
\newcommand{\la}{\lambda}
\newcommand{\te}{\theta}
\newcommand{\eps}{\varepsilon}
\newcommand{\F}{\mathbb F}
\newcommand{\R}{\mathbb R}
\newcommand{\FF}{\mathcal F}
\newcommand{\PP}{\mathcal P}
\newcommand{\Sp}{{\mathbb S}^{n-1}}	
\newcommand{\brR}[1]{\!\left(#1\right)}	
\newcommand{\brS}[1]{\left[#1\right]}	
\newcommand{\brA}[1]{\left\langle#1\right\rangle}	
\newcommand{\set}[1]{\left\lbrace#1\right\rbrace}	
\newcommand{\Set}[1]{\Big\lbrace#1\Big\rbrace}		
\newcommand{\abs}[1]{\left|#1\right|}			
\newcommand{\Abs}[1]{\big|#1\big|}		
\newcommand{\norm}[1]{\left\|#1\right\|}	
\newcommand{\conv}{{\rm conv}}	
\newcommand{\aff}{{\rm aff}}		
\newcommand{\ext}{{\rm ext}}	
\newcommand{\supp}{{\rm supp}}
\newcommand{\spn}{{\rm span}}	
\newcommand{\dH}{{\rm d_\mathcal{H}}}
\newcommand{\dBM}{{\rm d_\mathcal{BM}}}
\newcommand{\sub}[1]{{^{}_{\!#1}}}		
\newcommand{\hide}[1]{}
\begin{document}

\title
{
{\bf Minimal volume product near Hanner polytopes}
\footnotetext{2010 Mathematics Subject Classification 52A20, 52A40, 52B11.}
\footnotetext{Key words and phrases: convex bodies, Hanner polytopes, polar
bodies, volume product, Mahler's conjecture.}}
\author{Jaegil Kim\thanks{Supported in part by U.S.~National Science Foundation grant DMS-1101636.}
}

\maketitle
\date{}

\begin{abstract}
Mahler's conjecture asks whether the cube is a minimizer for the volume product of a body and its polar in the class of symmetric convex bodies in a fixed dimension. It is known that every Hanner polytope has the same volume product as the cube or the cross-polytope. In this paper we prove that every Hanner polytope is a strict local minimizer for the volume product in the class of symmetric convex bodies endowed with the Banach-Mazur distance.
\end{abstract}

\section{Introduction and Notation}

A \emph{body} is a compact set which is the closure of its interior and, in particular, a \emph{convex body} in $\R^n$ is a compact convex set with nonempty interior. Let $K$ be a convex body in $\R^n$ containing the origin. Then the {\it polar} of $K$ is defined by $$K^\circ = \set{y\in\R^n \big| \brA{x,y}\le 1 \mbox{\ for all\ } x\in K}$$ where $\brA{\cdot,\cdot}$ is the usual scalar product in $\R^n$. The \emph{volume product} of a convex body $K$ is defined by $\PP(K)=\min\limits_{z\in K}\abs{K}\abs{(K-z)^\circ}$ where $\abs{\,\cdot\,}$ denotes the volume. In particular, if a convex body $K$ is {\it symmetric}, that is, $K=-K$, then  the volume product of $K$ is given by  $$\PP(K)=\abs{K}\abs{K^\circ}$$ because the minimum in the definition of the volume product is attained at the origin in this case. It turns out that the volume product is invariant under any invertible affine transformation on $\R^n$, and also invariant under the polarity, that is, for any invertible affine transformation $T:\R^n\rightarrow\R^n$, 
\begin{equation}\label{eq:affine_invariance}
\PP(TK)=\PP(K),\quad\text{and}\quad\PP(K^\circ)=\PP(K).
\end{equation}
Moreover, the volume product of the $\ell_p$-sum of two symmetric convex bodies is the same as that of the $\ell_q$-sum for $p,q\in[1,\infty]$ with $1/p+1/q=1$ (see \cite{SR}, \cite{Ku2}), that is, 
\begin{equation}\label{lp_invariance}
\PP(K\oplus_p L)=\PP(K\oplus_q L).
\end{equation}
We provide the definition and other details for the $\ell_p$-sum in Section \ref{direct_sums}.

It follows from F. John's theorem \cite{Jo} and the continuity of the volume function in the Banach-Mazur compactum that the volume product attains its maximum and minimum. It turns out that the maximum of the volume product is attained at the Euclidean ball $B_2^n$ (more generally, the ellipsoids). The corresponding inequality is known as the Blaschke-Santal\'o inequality \cite{Sa,Pe,MP}: for every convex body $K$ in $\R^n$,  
\begin{equation*}\tag{Blaschke-Santal\'o inequality}
\PP(K) \le \PP(B^n_2)\quad
\end{equation*}
with equality if and only if $K$ is an ellipsoid. For the minimum of the volume product, it was conjectured by Mahler in \cite{Ma,Ma2} that $\PP(K)$ is minimized at the cube $B_\infty^n$ in the class of symmetric convex bodies in $\R^n$, and minimized at the simplex $\De^n$ in the class of general convex bodies in $\R^n$. In other words, the conjecture asks whether the following inequalities are true: for any symmetric convex body $K$ in $\R^n$, 
\begin{equation*}\tag{symmetric Mahler's conjecture}
\PP(K) \ge \PP(B_\infty^n)
\end{equation*}
and $\PP(K) \ge \PP(\Delta^n)$ for any convex body $K$ in $\R^n$. The case of dimension $2$ was proved by Mahler \cite{Ma}. The Mahler's conjecture is affirmative for several special cases, like, e.g., absolutely symmetric bodies \cite{SR,Me,Re2}, zonoids \cite{Re,GMR}, and bodies of revolution \cite{MR}. An isomorphic version of the conjecture was proved by Bourgain and Milman \cite{BM}: there is a universal constant $c>0$ such that $\PP(K) \ge c^n\PP(B^n_2)$; see also different proofs by Kuperberg \cite{Ku}, Nazarov \cite{Na}, and Giannopoulos, Paouris, Vritsiou \cite{GPV}. Functional versions of the Blaschke-Santal\'o inequality and the Mahler's conjecture in terms of log-concave functions were investigated by Ball \cite{Ba}, Artstein, Klartag, Milman \cite{AKM}, and Fradelizi, Meyer \cite{FM,FM2,FM3}. For more information, see an expository article \cite{Ta} by Tao.

A symmetric convex body $H$ is called a {\em Hanner polytope} if $H$ is one-dimensional, or it is the $\ell_1$ or $\ell_\infty$ sum of two (lower dimensional) Hanner polytopes. It follows from \eqref{eq:affine_invariance}, \eqref{lp_invariance} that the volume product of the cube in $\R^n$ is the same as that of all Hanner polytopes in $\R^n$. Thus every Hanner polytope is also a candidate for a minimizer of the volume product among symmetric convex bodies. In this paper, we prove that every Hanner polytope is a local minimizer of the volume product in the symmetric setting. For the local behavior of the volume product, it is natural to consider the {\it Banach-Mazur distance} between symmetric convex bodies, which is defined by $$\dBM(K,L) = \inf \Set{c\ge1 : L\subset TK\subset cL, T\in{\rm GL}(n)},$$ because the volume product is invariant under linear transformations and the polarity. Due to such invariance properties of the volume product, we may fix a position of a convex body by taking a linear transformation. In this situation, we consider the {\it Hausdorff distance} $\dH$ which is defined by $$\dH(K,L) = \max\Big(\max_{x\in K}\min_{y\in L}|x-y|,\,\,\max_{y\in L}\min_{x\in K}|x-y|\Big).$$ In this paper we prove the following result:

\begin{maintheorem}
Let $K\subset\R^n$ be a symmetric convex body close enough to one of Hanner polytopes in the sense that $$\de=\min\Set{\dBM(K,H)-1: \text{$H$ is a Hanner polytope in $\R^n$}}$$ is small enough. Then $$\PP(K)\ge\PP(B_\infty^n)+c(n)\de$$ where $c(n)>0$ is a constant depending on the dimension $n$ only.
\end{maintheorem}

The local minimality of the volume product was first investigated in \cite{NPRZ} by Nazarov, Petrov, Ryabogin, and Zvavitch. Namely, they proved that the cube is a strict local minimizer of the volume product in the class of symmetric convex bodies endowed with the Banach-Mazur distance. It turns out that the basic procedure of the proof used in \cite{NPRZ} can be applied for other polytopes such as the simplex and the Hanner polytopes. In case of the simplex, the technique of \cite{NPRZ} can be adapted to the non-symmetric setting by showing the stability of the Santal\'o point, which leads to the local minimality of the simplex in non-symmetric setting by Reisner and the author \cite{KR}.
  
In case of Hanner polytopes, however, it is not so simple to get the same conclusion as the cube because the structure of a Hanner polytope may be much more complicated than that of the cube. To illustrate the structure of Hanner polytopes, we notice that Hanner polytopes are in one-to-one correspondence with the (perfect) graphs which do not contain any induced path of edge length $3$ (see Section \ref{delta_gap}). In this correspondence, the $n$-dimensional cube is associated with the graph of $n$ vertices without any edges. Thus, a Hanner polytope may have more delicate combinatorial structure that the cube, especially in large dimension. To overcome such difficulties, we employ and analyze the combinatorial representation of Hanner polytopes in Section \ref{delta_gap}.

In Section \ref{general_construction}, we investigate the technique of \cite{NPRZ} developed for the cube case, and then restate or generalize the key steps to get sufficient conditions for the local minimality of the Hanner polytopes. We provide the definition and basic properties of the $\ell_p$-sum of two polytopes in Section \ref{direct_sums}, which help us to characterize the faces and the flags of the $\ell_1$ or $\ell_\infty$-sum in terms of those of summands.  The sufficient conditions obtained from Section \ref{general_construction} will be verified for the Hanner polytopes in Sections \ref{Hanner_polytopes}, \ref{differential_volume_function}, \ref{delta_gap}. In particular, the combinatorial representation of vertices of a Hanner polytope will be useful for not only choosing a special position of a given convex body which is contained in the cube and contains the cross-polytope, but also finding a section or a projection of the convex body which makes the local minimality problem reduced to a lower dimensional situation. 

\hide{From a specific construction for Hanner polytopes given in Section \ref{Hanner_polytopes}, we prove intermediate steps in Sections \ref{differential_volume_function}, \ref{delta_gap}. These intermediate steps are very essential for the proof of Main theorem which is given in the end of Section \ref{delta_gap}. }

\medskip
We finish the introduction with the list of the notation used in the paper.

\medskip
\noindent{\bf Notation.} \quad Let $A$ be a non-empty subset of $\R^n$. Denote by $\brA{\cdot,\cdot}$ the usual inner product.
\begin{enumerate}
\item $\conv(A)$ : the minimal convex set containing $A$ \hfill ({\it convex hull})\qquad
\item $\spn(A)$ : the minimal linear subspace containing $A$ \hfill ({\it linear span})
\item $\aff(A)$ : the minimal affine subspace containing $A$ \hfill ({\it affine hull})
\item ${\rm int} (A) = \set{a\in\R^n:(a+\eps B_2^n)\cap\aff(A)\subset A \text{ for some }\eps>0}$\hfill ({\it interior})
\item $A^\circ\, = \set{b\in\R^n:\brA{a,b}\le1, \forall a\in A}$\hfill ({\it polar})
\item $A^\perp = \set{b\in\R^n:\brA{a,b}=0, \forall a\in A}$\hfill ({\it orthogonal complement})
\item $A-A=\set{a-b\in\R^n: a,b\in A}$
\item $\dim (A)$ : the dimension of $\spn(A-A)$ \hfill ({\it dimension})
\item ${\rm diam} (A) = \sup\set{|a|:a\in A-A}$\hfill ({\it diameter})
\end{enumerate}
In addition, by $c$ we denote a constant depending on dimension only, which may change from line to line, and write $f(\eps)=O(\eps)$ if $\sup\limits_{0<\eps<\eps_0}\abs{f(\eps)\over\eps}<\infty$ for some $\eps_0>0$, and $f(\eps)=o(\eps)$ if $\lim\limits_{\eps\rightarrow0^+}\abs{f(\eps)\over\eps}=0$. Denote by $\partial K$ the boundary of a convex body $K$. The origin in the Euclidean space $\R^n$ of any dimension is always denoted by $0$. If $A$ is a measurable set of dimension $k$ in $\R^n$, we denote by $|A|$ the $k$-dimensional volume (Lebesgue measure) of $A$. There should be no confusion with the notation for the Euclidean norm of a vector $x\in\R^n$, which is $|x|=\sqrt{\brA{x,x}}$. In the paper we work in the Euclidean space $\R^n$ of a fixed dimension $n$ with the standard basis $\set{e_1,\cdots,e_n}$.\\

\noindent{\bf Acknowledgment}. It is with great pleasure that we thank Fedor Nazarov and Artem Zvavitch for their helpful advice during the preparation of this paper.

\section{General construction on Polytopes}\label{general_construction}

In this section we fix a convex polytope $P$ in $\R^n$ containing the origin in its interior, and we restate the key steps from \cite{NPRZ} for the polytope $P$. 

A subset $F$ of $P$ is called a {\it face} of $P$ if there exists a supporting hyperplane $A$ of $P$ such that $F=P\cap A$. In particular, a face $P$ is called a {\it vertex} if its dimension is $0$, and  a {\it facet} if its dimension is $n-1$. \hide{Notice that every face can have dimension from $0$ to $n-1$.} The empty set $\varnothing$ and the entire polytope $P$ are always considered as {\it improper faces} of $P$ of dimensions $-1$ and $n$, respectively. The {\it dual face} $F^*$ of a face $F$ is defined by $$F^*=\Set{y\in P^\circ : \brA{x,y}=1, \,\forall x\in F}.$$ Note that $F^*$ is a face of $P^\circ$ with $\dim F^*=n-1-\dim  F$ (see \cite[Ch.\ 3]{Gr}).  In case of improper faces, we get $P^*=\varnothing$ and $\varnothing^*=P^\circ$; hence the formula $\dim F^*=n-1-\dim F$ works even for improper faces $F$. 
An ordered set $\F=\set{F^0,\ldots,F^k,\cdots,F^{n-1}}$ consisting of $n$ faces of $P$ is called a \emph{flag} over $P$ if each $F^k$ is of dimension $k$ and $F^0 \subset F^1\subset\cdots\subset F^{n-1}$. 

\medskip
For each face $F$ of $P$ (respectively $P^\circ$), choose an affine subspace $A\sub{F}$ satisfying
\begin{enumerate}
\item[] {\bf (a)}\quad $A\sub{F}$ intersects $P$ (respectively $P^\circ$) at a single point, denoted by $c\sub{F}$, in the interior of $F$.
\item[] {\bf (b)}\quad $\dim  A\sub{F} + \dim  F =n-1$.
\end{enumerate} 
In addition, we assume that the affine subspaces $A\sub{F}$, $A\sub{F^*}$ for a face $F$ and its dual face $F^*$ satisfy
\begin{enumerate}
\setcounter{enumi}{2}
\item[] {\bf (c)}\quad $\brA{x,y}=1$ for every $x\in A\sub{F}$, $y\in A\sub{F^*}$.
\end{enumerate}
\begin{center}
\includegraphics[scale=.5,page=1]{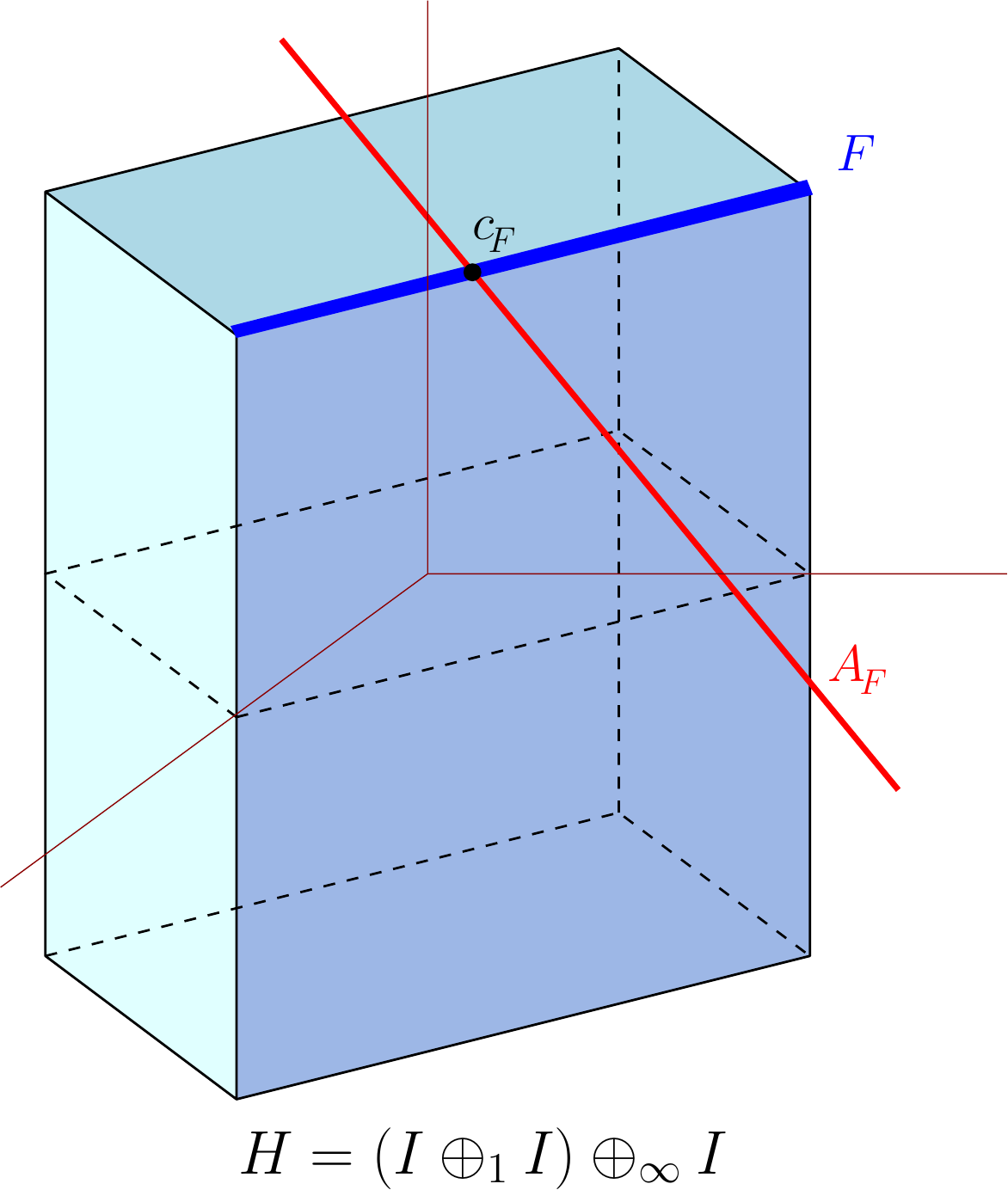}\qquad
\includegraphics[scale=.43,page=2]{fig_construction_3d_polar.pdf}
\end{center}

Let $K$ be a convex body in $\R^n$ containing the origin in its interior. In order to construct the points $x\sub{F}$, $y\sub{F}$ for each face $F$ of $P$, we move the affine subspace $A\sub{F}$ in the direction of $c\sub{F}$ (or equivalently, dilate $A\sub{F}$ from the origin) until it is tangent to $K$. On the affine subspace tangent to $K$ obtained by moving $A\sub{F}$, take the point $x\sub{F}$ on the boundary of $K$ that is nearest to $c\sub{F}$, and the point $y\sub{F}$ on the line containing the origin and $c\sub{F}$. In other words, the points $x\sub{F}$, $y\sub{F}$ are taken so that
\begin{equation}\label{def_points}
x\sub{F} \in (t\sub{F}A\sub{F})\cap\partial K\qquad\text{and}\qquad y\sub{F} = t\sub{F} c\sub{F},
\end{equation}
where $t=t\sub{F}>0$ is chosen so that the affine subspace $tA\sub{F}$ is tangent to $K$. 
For the dual case, we get the points $x\sub{F^*}$, $y\sub{F^*}$ by replacing $P$, $F$, $A\sub{F}$, $K$ with $P^\circ$, $F^*$, $A\sub{F^*}$ $K^\circ$, respectively. 

\begin{definition}\label{def:vol_function}
Let $P$ be a convex polytope in $\R^n$ containing the origin in its interior, $\FF$ the set of all faces of $P$, and $N$ the number of all faces of $P$.
\begin{enumerate}
\item The volume function $V:(\R^n)^N\rightarrow\R_+$ is defined by $$V(Z)=\sum_{\F\text{: flag over $P$}} \abs{Z_\F}\qquad\text{for } Z=\brR{z\sub{F}}\sub{F\in\FF} \in(\R^n)^N,$$
where $Z_\F$ denotes the simplex defined by 
\begin{equation}\label{def_simplex}
Z_\F=\conv\set{0,z\sub{F^0},\ldots,z\sub{F^{n-1}}}\quad\text{for }Z=(z\sub{F})\sub{F\in\FF},\, \F=\set{F^0,\cdots,F^{n-1}}.
\end{equation}
\item Let $K$ be a convex body in $\R^n$ containing the origin. Suppose that the affine subspaces $A\sub{F}$, $A\sub{F^*}$ for each face $F$ satisfy the conditions (a), (b), (c). Then we consider the points in $\R^{nN}$ defined by
\begin{align*}
&X\,\,\,=\brR{x\sub{F}}\sub{F\in\FF},&&Y\,\,\,=\brR{y\sub{F}}\sub{F\in\FF},&&C\,\,\,=\brR{c\sub{F}}\sub{F\in\FF},\\
&X^*=\brR{x\sub{F^*}}\sub{F\in\FF},&&Y^*=\brR{y\sub{F^*}}\sub{F\in\FF},&&C^*=\brR{c\sub{F^*}}\sub{F\in\FF}.
\end{align*}
where $c\sub{F}$ denotes the unique common point of $F$ and $A\sub{F}$, and $x\sub{F}$, $y\sub{F}$, $x\sub{F^*}$, $y\sub{F^*}$ are the points constructed in \eqref{def_points} from $K$, $K^\circ$. 
\end{enumerate}
\end{definition}
The point $C=(c\sub{F})$ depends on $P$ and $A\sub{F}$'s only, but $X=(x\sub{F})$, $Y=(y\sub{F})$ also depend on $K$. With the notation of \eqref{def_simplex}, each flag $\F$ gives the simplices $X_\F$, $Y_\F$, $C_\F$.  Then, the polytopes $P$, $P^\circ$ can be written as
\begin{equation}\label{eq:division_by_simplices}
P=\bigcup_\F C_\F\quad\text{and}\quad P^\circ=\bigcup_\F C^*_{\F^*}.
\end{equation}
Indeed, we can use the induction on $n$ to prove that every polytope $P$ of dimension $n$ containing the origin satisfies $P=\bigcup_\F C_\F$ whenever $c\sub{F}\in{\rm int}(F)$ for each face $F$ of $P$. Assume that it is true for any $(n-1)$-dimensional polytope containing the origin. Let $P$ be a polytope of dimension $n$ containing the origin. Then, every facet $G$ of $P$ is an $(n-1)$-dimensional polytope, and each flag over $G$ (as a polytope of dimension $n-1$) can be viewed as a flag over $P$ containing $G$. Thus, the inductive assumption gives $G=\bigcup_{\F\ni G}\conv\set{c\sub{F}:F\in\F}$ by viewing each facet $G$ as an $(n-1)$-dimensional polytope and the point $c^{}_{G}$ as the origin. The convex hull of the origin and $G$ is the same as that of the origin and $c\sub{F}$'s for any face $F\in\bigcup_{\F\ni G}\F$. Thus, $\conv(\set{0}\cup G)=\bigcup_{\F\ni G}\conv(\set{0}\cup\set{c\sub{F}:F\in\F})$ because the right hand side is already convex. Finally, 
\begin{align*}
P &=\bigcup_G \bigcup_{\F\ni G}\conv(\set{0}\cup\set{c\sub{F}:F\in\F})=\bigcup_G \bigcup\limits_{\F\ni G} C_\F=\bigcup\limits_\F C_\F.
\end{align*}
Moreover, the interiors of any two simplices $C_\F$, $C_{\F'}$ in the above union are disjoint. Indeed, if the facet in a flag $\F$ is different from that of another flag $\F'$, then the intersection of $\conv\set{c\sub{F}:F\in\F}$ and $\conv\set{c\sub{F}:F\in\F'}$ is at most $(n-2)$-dimensional. Thus, the intersection of $C_\F$ and $C_{\F'}$ is at most $(n-1)$-dimensional, which implies that the interiors of  $C_\F$ and $C_{\F'}$ are disjoint. On the other hand, if $\F$ and $\F'$ contain a common facet, then the facet can be viewed as an $(n-1)$-dimensional polytope and the $c\sub{F}$-point as the origin. Using the same (inductive) argument as above, we conclude that the interiors of any different simplices $C_\F$, $C_{\F'}$ are disjoint. Furthermore, if every $x\sub{F}$-point is close enough to the $c\sub{F}$-point, then the above argument can be applied to prove that the interiors of any two simplices $X_\F$, $X_{\F'}$ are also disjoint. In that case, $V(X)$ can be viewed as the volume of the (star-shaped, not necessary convex) polytope, defined by the union of simplices $X_\F$'s over all flags.

\begin{proposition}\label{prop:construction}
Let $P$ be a convex polytope, and $K$ a convex body in $\R^n$ such that $0\in{\rm int}(P\cap K)$. Let $A\sub{F}$, $A\sub{F^*}$ be affine subspaces satisfying the conditions {\rm (a), (b), (c)}, and let $x\sub{F}$, $y\sub{F}$, $c\sub{F}$, $x\sub{F^*}$, $y\sub{F^*}$, $c\sub{F^*}$ be the points obtained  from $K$ and $A\sub{F}$, $A\sub{F^*}$, as in \eqref{def_points}. Then $$\brA{x\sub{F},x\sub{F^*}}=\brA{y\sub{F},y\sub{F^*}}=1.$$ Moreover, if  $\de=\dH(K,P)$ is small enough, then $|x\sub{F}-c\sub{F}|\le c\de$ and $|y\sub{F}-c\sub{F}|\le c\de$ where $c>0$ does not depend on $K$, but may depend on $P$, $F$ and $A\sub{F}$.
\end{proposition}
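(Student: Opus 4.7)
The plan is to tackle the duality identity and the Hausdorff estimate separately. For the duality identity, I would write $x\sub{F}=t\sub{F}a$ for some $a\in A\sub{F}$ and $x\sub{F^*}=t\sub{F^*}b$ for some $b\in A\sub{F^*}$, as in \eqref{def_points}. Since both $(a,b)$ and $(c\sub{F},c\sub{F^*})$ are pairs in $A\sub{F}\times A\sub{F^*}$, condition {\bf (c)} gives $\brA{a,b}=\brA{c\sub{F},c\sub{F^*}}=1$, so
\[
\brA{x\sub{F},x\sub{F^*}}=t\sub{F}t\sub{F^*}=\brA{y\sub{F},y\sub{F^*}},
\]
and the identity reduces to the scalar equation $t\sub{F}t\sub{F^*}=1$. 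To prove this, I would use polar duality: by maximality of $t\sub{F}$, $(t\sub{F}+\eps)A\sub{F}\cap K=\varnothing$ for small $\eps>0$, so a separation/limit argument produces a supporting hyperplane $H=\brC{z:\brA{z,w}=1}$ of $K$ with $t\sub{F}A\sub{F}\subset H$ and $w\in\partial K^\circ$. Using {\bf (c)} together with the dimension constraint in {\bf (b)}, the direction space of $A\sub{F}$ is exactly $\spn(A\sub{F^*})^\perp$, so $w\in\spn(A\sub{F^*})$; combined with $\brA{c\sub{F},w}=1/t\sub{F}$ (obtained from $\brA{t\sub{F}c\sub{F},w}=1$ since $t\sub{F}c\sub{F}\in H$), a short computation pins $w$ inside $(1/t\sub{F})A\sub{F^*}$. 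Hence $(1/t\sub{F})A\sub{F^*}$ meets $K^\circ$. Conversely, for any $t>1/t\sub{F}$ and $y\in tA\sub{F^*}$, condition {\bf (c)} gives $\brA{x\sub{F},y}=tt\sub{F}>1$, so $y\notin K^\circ$. Therefore $t\sub{F^*}=1/t\sub{F}$.

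For the Hausdorff estimate, at $K=P$ the construction collapses to $t\sub{F}=1$ and $x\sub{F}=y\sub{F}=c\sub{F}$, since $A\sub{F}\cap P=\set{c\sub{F}}$ by {\bf (a)}. Because $0\in{\rm int}(P)$, small Hausdorff distance $\dH(K,P)\le\de$ upgrades (via Minkowski sum estimates) to a linear sandwich $(1-c\de)P\subset K\subset(1+c\de)P$ with $c$ depending only on $P$. Monotonicity and the scaling $t\sub{F}^{\la K}=\la t\sub{F}^K$ under inclusions then pinch $|t\sub{F}-1|\le c\de$, giving $|y\sub{F}-c\sub{F}|=|t\sub{F}-1|\,|c\sub{F}|=O(\de)$. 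For $x\sub{F}$, the inclusion $x\sub{F}\in t\sub{F}A\sub{F}\cap K\subset(1+c\de)\brR{sA\sub{F}\cap P}$ with $s=t\sub{F}/(1+c\de)=1-O(\de)$ reduces everything to bounding both the diameter of the slice $sA\sub{F}\cap P$ and its distance to $c\sub{F}$ by $O(1-s)=O(\de)$.

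The main obstacle is precisely this \emph{linear} (rather than square-root) bound on the slice diameter. For a generic convex body the near-tangent section scales as $\sqrt{1-s}$ (think of spherical caps), so the polytopal nature of $P$ is essential. The linear bound should follow from the piecewise-linear structure of $\partial P$ near $c\sub{F}\in{\rm int}(F)$: locally $\partial P$ consists of finitely many flat pieces, and the transversality $A\sub{F}\cap P=\set{c\sub{F}}$ in {\bf (a)} forces each face of $P$ passing near $c\sub{F}$ to meet $A\sub{F}$ at a strictly positive angle, bounded below in terms of $P$, $F$, and $A\sub{F}$. This yields the linear slice bound and hence $|x\sub{F}-c\sub{F}|=O(\de)$.
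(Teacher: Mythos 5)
Your argument is essentially the paper's: the identity $\brA{x\sub{F},x\sub{F^*}}=\brA{y\sub{F},y\sub{F^*}}=t\sub{F}t\sub{F^*}$ via (c), followed by a supporting-hyperplane/dual-point argument showing $t\sub{F}t\sub{F^*}=1$, and then the linear sandwich $(1-c\de)P\subset K\subset(1+c\de)P$ plus a linear bound on the slice diameter. The one step you flag as the ``main obstacle'' (the linear, rather than square-root, shrinkage of the slice $sA\sub{F}\cap P$) is handled cleanly in the paper by observing that $P\cap sA\sub{F}=P_1\cap sA\sub{F}$ with $P_1:=P\cap\spn(A\sub{F})$, and that $c\sub{F}$ is a \emph{vertex} of the polytope $P_1$ because $P_1\cap A\sub{F}=\set{c\sub{F}}$; near a vertex of a polytope, the cross-sections by parallel translates of a supporting hyperplane shrink linearly, which gives the needed bound directly rather than via your ``positive angle'' heuristic.
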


\begin{proof}
Note first that 
\begin{equation}\label{eq:AF_AFstar}
A\sub{F^*} = \Set{y\in\R^n : \brA{x,y}=1 \,\forall x\in A\sub{F}}.
\end{equation}
Indeed, if we denote the set on the right hand side by $B$, then the condition (c) implies $A\sub{F^*}\subset B$. Moreover, since $\brA{c\sub{F},y}=1$ and $\brA{x-c\sub{F},y}=0$ for each $x\in A\sub{F}$, $y\in B$, we have $B\subset(c\sub{F^*}+c\sub{F}^\perp)\cap(A\sub{F}-c\sub{F})^\perp$. So, the dimension of $B$ is at most $n-1-\dim  A\sub{F}$ which is equal to  $n-1-(n-1-\dim  F)=\dim  F=n-1-\dim  F^*=\dim A\sub{F^*}$ by the condition (b) and the formula $\dim  F^*=n-1-\dim  F$. Thus $A\sub{F^*}=B=\set{y\in\R^n : \brA{x,y}=1 \,\forall x\in A\sub{F}}$.

Let $t=t\sub{F}$ be such that $tA\sub{F}$ is tangent to $K$ at $x\sub{F}$. Consider a hyperplane $A$ containing $tA\sub{F}$ that is tangent to $K$ at $x\sub{F}$. Let $z$ be the dual point of $A$, i.e., $\brA{y,z}=1$ for every $y\in A$. Then $z\in K^\circ$, and $\brA{ty,z}=1$ for all $y\in A\sub{F}$, which implies $tz\in A\sub{F^*}$ by \eqref{eq:AF_AFstar}. Moreover, $\frac1t A\sub{F^*}$ is tangent to $K^\circ$ at $z$. Indeed, every $p\in\frac1t A\sub{F^*}$ satisfies $\brA{tp,q}=1$ for each $q\in A\sub{F}$ and hence $\brA{p,x\sub{F}}=1$, so every $p\in\frac1t A\sub{F^*}$ is not in the interior of $K^\circ$. It implies that $\frac1t A\sub{F^*}$ is tangent to $K^\circ$ at $z$. Therefore, we get $t\sub{F^*}=\frac1t$, which gives $1=t\sub{F}t\sub{F^*}=\brA{y\sub{F},y\sub{F^*}}=\brA{x\sub{F},x\sub{F^*}}$.

For the second part, replacing $\de$ by $c\de$ if necessary, we may assume that $(1-\de)P\subset K\subset(1+\de)P$. Then $x\sub{F}\in tA\sub{F}\cap\partial K$ and $y\sub{F}=tc\sub{F}$ for some $t$ with $1-\de\le t\le1+\de$. So, $|y\sub{F}-c\sub{F}|=|t-1|\cdot|c\sub{F}|\le c_1\de$. Consider $P_1=P\,\cap\,\spn (A\sub{F})$. Then $c\sub{F}$ is a vertex of $P_1$ because $c\sub{F}$ is the unique common point of $P_1$ and $A\sub{F}$. It implies that there exists a constant $c_2$, depending on $P$, $F$, $A\sub{F}$, such that for every $\de>0$, $${\rm diam}\brR{P\cap(1-\de)A\sub{F}}={\rm diam}\brR{P_1\cap(1-\de)A\sub{F}}\le c_2\de.$$ Finally, since $x\sub{F}$, $y\sub{F}$ are contained in $(1+\de)P$ and also lie on $tA\sub{F}$,
\begin{align*}
|x\sub{F}-y\sub{F}|&\le{\rm diam}\Big((1+\de)P\cap tA\sub{F}\Big)\le(1+\de)\cdot {\rm diam}\brR{P\cap \frac{t}{1+\de}A\sub{F}}\\
&\le c_2(1+\de)\brR{1-\frac{t}{1+\de}}\le 2c_2\de,
\end{align*}
which implies $|x\sub{F}-c\sub{F}|\le (c_1+2c_2)\de$.
\end{proof}

\begin{proposition}\label{prop:split_equal_volumes}
Suppose that $P$, $P^\circ$ are divided into simplices $C_\F$, $C^*_\F$ of equal volume, i.e., 
\begin{equation}\label{equal_volumes}
\abs{C_\F}=\abs{C_{\F'}}\quad\text{and}\quad\abs{C^*_\F}=\abs{C^*_{\F'}}\quad\text{for any two flags $\F$, $\F'$}.
\end{equation}
Then 
$V(Y)\cdot V(Y^*) \ge V(C)\cdot V(C^*)=\abs{P}\cdot\abs{P^\circ}$.
\end{proposition}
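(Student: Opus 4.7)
The plan is to reduce the inequality to a single application of Cauchy-Schwarz on flag-indexed scaling factors. The central step is a pointwise identity between simplex volumes: for each flag $\F=\set{F^0,\ldots,F^{n-1}}$ over $P$, the relations $y_{F^k}=t_{F^k}c_{F^k}$ together with multilinearity of the determinant give $\abs{Y_\F}=\prod_k t_{F^k}\cdot\abs{C_\F}$, and dually $\abs{Y^*_\F}=\prod_k t_{F^{k*}}\cdot\abs{C^*_\F}$. Now condition (c), applied at $c_F\in A\sub{F}$ and $c_{F^*}\in A\sub{F^*}$, gives $\brA{c_F,c_{F^*}}=1$, while Proposition \ref{prop:construction} gives $\brA{y_F,y_{F^*}}=1$; combining these yields the reciprocal duality $t_F\cdot t_{F^*}=1$. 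Writing $\tau_\F=\prod_{k=0}^{n-1}t_{F^k}$, I therefore obtain the clean identity
$$\abs{Y_\F}=\tau_\F\cdot\abs{C_\F},\qquad\abs{Y^*_\F}=\tau_\F^{-1}\cdot\abs{C^*_\F}.$$

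Next I would invoke the equal-volume hypothesis. Let $N$ be the number of flags over $P$; the partition \eqref{eq:division_by_simplices} combined with \eqref{equal_volumes} gives $\abs{C_\F}=\abs{P}/N$ and $\abs{C^*_\F}=\abs{P^\circ}/N$ for every flag, and in particular $V(C)=\abs{P}$ and $V(C^*)=\abs{P^\circ}$. Consequently,
$$V(Y)\cdot V(Y^*)=\frac{\abs{P}\cdot\abs{P^\circ}}{N^2}\brR{\sum_\F\tau_\F}\brR{\sum_\F\tau_\F^{-1}}\ge\abs{P}\cdot\abs{P^\circ},$$
where the last inequality is Cauchy-Schwarz applied to the vectors $(\sqrt{\tau_\F})_\F$ and $(1/\sqrt{\tau_\F})_\F$, which yields $\brR{\sum_\F\tau_\F}\brR{\sum_\F\tau_\F^{-1}}\ge N^2$. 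This is exactly the desired inequality $V(Y)\cdot V(Y^*)\ge V(C)\cdot V(C^*)$.

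I do not anticipate any serious obstacle: once the local identity and the reciprocal duality $t_F\cdot t_{F^*}=1$ are in hand, the rest is pure Cauchy-Schwarz. The only bookkeeping point worth checking is that the sum over flags of $P$ defining $V(Y^*)$ matches, term by term, the partition of $P^\circ$ via the dual-flag bijection $\F\mapsto\F^*$, which is transparent from the definitions of $C^*$ and $Y^*$. It is also worth noting that the equal-volume hypothesis is essential precisely because it allows the factors $\abs{C_\F}$ and $\abs{C^*_\F}$ to be pulled outside the Cauchy-Schwarz step as a single multiplicative constant.
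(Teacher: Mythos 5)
Your proof is correct and is essentially the paper's own argument: both hinge on the reciprocal duality $t_F\,t_{F^*}=1$ (hence $\abs{Y_\F}\abs{Y^*_\F}=\abs{C_\F}\abs{C^*_\F}$) followed by a single application of Cauchy--Schwarz, with the equal-volume hypothesis used to identify the resulting lower bound with $\abs{P}\abs{P^\circ}$. The only cosmetic difference is that you factor out the constant volumes $\abs{P}/N$, $\abs{P^\circ}/N$ before applying Cauchy--Schwarz to the scalars $\tau_\F$, whereas the paper applies Cauchy--Schwarz directly to $\abs{Y_\F}^{1/2}\abs{Y^*_\F}^{1/2}$ and invokes the hypothesis at the last step.
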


\begin{proof}
For each flag $\F=\set{F^0,\ldots,F^{n-1}}$ over $P$, 
\begin{align*}
\abs{Y_\F}\abs{Y^*_{\F}} &=\frac1{n!^2}\,\Abs{\det\big(y\sub{F^0},\ldots,y\sub{F^{n-1}}\big)}\cdot \Abs{\det\big(y\sub{(F^0)^*},\ldots,y\sub{(F^{n-1})^*}\big)} \\
&=\frac1{n!^2}\,\Abs{\det\big(t\sub{F^0}c\sub{F^0},\ldots,t\sub{F^{n-1}}c\sub{F^{n-1}}\big)}\cdot \Abs{\det\big(t\sub{(F^0)^*}c\sub{(F^0)^*},\ldots,t\sub{(F^{n-1})^*}c\sub{(F^{n-1})^*}\big)} \\
&=\frac1{n!^2}\Big(\prod_{k=0}^{n-1}t\sub{F^k} t\sub{(F^k)^*}\Big) \Abs{\det\big(c\sub{F^0},\ldots,c\sub{F^{n-1}}\big)}\cdot \Abs{\det\big(c\sub{(F^0)^*},\ldots,c\sub{(F^{n-1})^*}\big)},
\end{align*}
which is equal to $\abs{C_\F}\abs{C^*_{\F}}$ because $t\sub{F}t\sub{F^*}=\brA{y\sub{F},y\sub{F^*}}=1$ for any $F$ by Proposition \ref{prop:construction}.
From Cauchy-Schwarz inequality and the assumption \eqref{equal_volumes}, $$V(Y)V(Y^*)= \brR{\sum_\F \abs{Y_\F}} \brR{\sum_\F\abs{Y^*_{\F}}} \ge \brS{\sum_\F \abs{Y_\F}^{\frac12} \abs{Y^*_{\F}}^{\frac12}}^2 =  \brS{\sum_\F \abs{C_\F}^{\frac12} \abs{C^*_{\F}}^{\frac12}}^2.$$ Since $\abs{C_\F}\abs{C^*_\F}$ is constant for each flag $\F$, we have $$V(Y)V(Y^*)\ge \sum_\F \abs{C_\F} \sum_\F\abs{C^*_\F} = \abs{P}\abs{P^\circ}.$$
\end{proof}

The directional derivative of the function $V$ along a vector $Z=(z\sub{F})$ at a point $A=(a\sub{F})$, which we denote by $\brA{V'(A),Z}$, is defined as $$\brA{V'(A),Z}=\lim_{t\rightarrow0}\frac{V(A+tZ)-V(A)}{t}.$$

\begin{proposition}\label{prop:vol_difference}
Let $P$ be a convex polytope in $\R^n$ containing the origin in its interior. For each face $F$ of $P$, choose an affine subspace $A\sub{F}$ satisfying the conditions {\rm (a), (b), (c)}, and consider the point $C=(c\sub{F})$ given in Definition \ref{def:vol_function}. Suppose that 
\begin{equation}\label{differential_zero}
\brA{V'(C),Z}=0 \quad\text{for every }Z=\big(z\sub{F}\big)\text{ with all } z\sub{F}\in A\sub{F}-A\sub{F}.
\end{equation}
Then $|V(X)-V(Y)|\le c\de^2$, provided that $\de=\dH(K,P)$ is small enough, where $c>0$ is a constant depending on $P$ and $A\sub{F}$'s.
\end{proposition}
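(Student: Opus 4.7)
The plan is to Taylor-expand $V$ at the point $C$ and exploit the hypothesis \eqref{differential_zero} applied to the displacement $X-Y$, whose coordinates naturally lie in the direction subspaces $A\sub{F}-A\sub{F}$.

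First I would argue that on a small neighborhood of $C$ the function $V$ is (the restriction of) a polynomial of degree $n$ in the $nN$ coordinates. Indeed, $\abs{Z_\F}=\frac1{n!}\bigl|\det(z\sub{F^0},\ldots,z\sub{F^{n-1}})\bigr|$, and as long as $|C\sub{\F}|\ne 0$ for every flag $\F$ (which is automatic from the decomposition \eqref{eq:division_by_simplices} into full-dimensional simplices, since each $c\sub{F^k}$ lies in the relative interior of the $k$-dimensional face $F^k$), each such determinant has a definite sign throughout a small neighborhood of $C$; on that neighborhood $V$ equals a fixed signed sum of multilinear determinants, hence is smooth with derivatives of all orders bounded by constants depending only on $P$ and the $A\sub{F}$'s.

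Next, by Proposition \ref{prop:construction} we have $|x\sub{F}-c\sub{F}|\le c\de$ and $|y\sub{F}-c\sub{F}|\le c\de$ for every face $F$, so both $X$ and $Y$ lie within $c\de$ of $C$ in the product norm. Taylor's theorem then gives
\begin{equation*}
V(X)=V(C)+\brA{V'(C),X-C}+O(\de^2),\qquad V(Y)=V(C)+\brA{V'(C),Y-C}+O(\de^2),
\end{equation*}
so that
\begin{equation*}
V(X)-V(Y)=\brA{V'(C),X-Y}+O(\de^2).
\end{equation*}
The remainder $O(\de^2)$ is uniform in $K$ since it involves only bounds on second (and higher) derivatives of $V$ in a fixed neighborhood of $C$.

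Finally, I would observe that the displacement $X-Y$ satisfies the structural hypothesis of \eqref{differential_zero}. By construction, $x\sub{F}\in t\sub{F}A\sub{F}$ and $y\sub{F}=t\sub{F}c\sub{F}$, where $c\sub{F}\in A\sub{F}$; hence both $x\sub{F}$ and $y\sub{F}$ lie in the same affine subspace $t\sub{F}A\sub{F}$, so
\begin{equation*}
x\sub{F}-y\sub{F}\in t\sub{F}A\sub{F}-t\sub{F}A\sub{F}=t\sub{F}(A\sub{F}-A\sub{F})=A\sub{F}-A\sub{F},
\end{equation*}
the last equality because $A\sub{F}-A\sub{F}$ is a linear subspace (the direction space of $A\sub{F}$) and $t\sub{F}>0$. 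Setting $Z=X-Y$, the assumption \eqref{differential_zero} gives $\brA{V'(C),X-Y}=0$, and therefore $|V(X)-V(Y)|\le c\de^2$, as desired.

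The only delicate ingredient is the local-polynomial (hence $C^\infty$) representation of $V$ near $C$, needed so that the second-order Taylor remainder can be controlled uniformly; everything else is a straightforward first-order computation combined with the geometric observation that $x\sub{F}$ and $y\sub{F}$ lie on the same translate of $A\sub{F}$.
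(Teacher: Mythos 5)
Your proposal is correct and follows essentially the same route as the paper: Taylor-expand $V$ at $C$, use Proposition \ref{prop:construction} to bound $|X-C|$ and $|Y-C|$ by $c\de$, and kill the first-order term by applying \eqref{differential_zero} to $Z=X-Y$. You additionally spell out two points the paper leaves implicit — that $V$ is smooth near $C$ because every $|C_\F|$ is nonzero, and that $x\sub{F}-y\sub{F}\in A\sub{F}-A\sub{F}$ since both points lie on $t\sub{F}A\sub{F}$ — both of which are accurate and worth making explicit.
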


\begin{proof}
The Talyor series expansion of $V$ around the point $C=(c\sub{F})$ gives
$$V(X) = V(C) + \brA{V'(C),X-C} + O(|X-C|^2)$$
and
$$V(Y) = V(C) + \brA{V'(C),Y-C} + O(|Y-C|^2)$$
By subtracting the above two equations, the assumption \eqref{differential_zero} gives
\begin{align*}
V(X) - V(Y) &= \brA{V'(C),X-Y} + O(|X-C|^2) + O(|Y-C|^2) \\
&= O(|X-C|^2) + O(|Y-C|^2),
\end{align*}
which is $O(\de^2)$ by Proposition \ref{prop:construction}.
\end{proof}

\begin{remark}[{\it Plan for the proof of Main theorem }\!\!]
Note that the conditions \eqref{equal_volumes}, \eqref{differential_zero} in Propositions \ref{prop:split_equal_volumes}, \ref{prop:vol_difference} depend on the polytope $P$ only. If a polytope $P$ satisfies these two conditions, then Propositions \ref{prop:construction}, \ref{prop:split_equal_volumes}, \ref{prop:vol_difference} give $$V(X)V(X^*)\ge |P||P^\circ|-c\de^2,$$
where $\de=\dH(K,P)$ is small enough and $c>0$ is a constant depending on $P$ and $A\sub{F}$'s only.
In Section \ref{Hanner_polytopes}, we fix a Hanner polytope $H$, and define affine subspaces $A\sub{F}$'s associated with $H$ that satisfy the conditions (a), (b), (c). The special choice of $A\sub{F}$ is not essential to prove the condition \eqref{equal_volumes}, but is necessary for the proof of the condition \eqref{differential_zero} (see Section \ref{differential_volume_function}). In fact, the condition \eqref{equal_volumes} is always true whenever the centroid of each face $F$ of $H$ is chosen as the unique common point $c\sub{F}$ of $A\sub{F}$ and $H$. \hide{Due to condition \eqref{differential_zero}, however,  we need a very particular choice of the affine subspace $A\sub{F}$, which is given in Section \ref{differential_volume_function}.}
To complete the proof, we prove in Section \ref{delta_gap} that there exists a symmetric convex body $\tilde{K}$ with $\dBM(\tilde{K},K)=1+o(\de)$ such that $$|\tilde{K}||\tilde{K}^\circ|\ge V(\tilde{X})V(\tilde{X}^*)+c'\de,$$ where $\tilde{X}$, $\tilde{X}^*$ are the sets of the $x\sub{F}$-points constructed from $\tilde{K}$, $\tilde{K}^\circ$, as in Definition \ref{def:vol_function}.
\end{remark}

\section{Direct sums of Polytopes}\label{direct_sums}

In this section we investigate basic properties of the direct sum (mostly the $\ell_1$ or $\ell_\infty$ sum) of two polytopes to get the general form of a face and of its centroid. In the end of this section we provide a description of the flags over the $\ell_1$ or $\ell_\infty$ sum of two polytopes in terms of the summands.

For $1\le p<\infty$, the \emph{$\ell_p$-sum} $A+_p B$ of two subsets $A$, $B$ of $\R^n$ is defined as the set of all points of the form $x$, $y$, or $\la^{1/p}x+(1-\la)^{1/p}y$ for $x\in A$, $y\in B$ and $\la\in(0,1)$. In particular, if $A$, $B$ are convex, then their $\ell_1$-sum is given by $$A+_1B=\conv(A\cup B).$$ The \emph{$\ell_\infty$-sum} of $A$ and $B$ is defined as the set of all points of the form $x+y$ for $x\in A$, $y\in B$ (the Minkowski sum), i.e., $$A+_\infty B=A+B.$$ We write $A\oplus_p B$ instead of $A+_p B$ if $\spn(A)\cap\spn(B)=\set{0}$. In addition, in case of $p=\infty$, we mostly use the notation $A+B$, $A\oplus B$ without subscript.

\begin{lemma} \label{lem:dim_sum} Let $A$, $B$ be non-empty convex subsets of $\R^{n_1}$, $\R^{n_2}$ respectively, and let $\R^n=\R^{n_1}\oplus\R^{n_2}$. Then $$\dim (A\oplus B)=\dim A + \dim B.$$ If $\aff(A)\cap\aff(B)=\varnothing$ (i.e., at least one of $\aff(A)$ and $\aff(B)$ does not contain the origin), then $$\dim (A\oplus_1 B)=\dim A + \dim B+1.$$ In addition, the second formula holds even if one of the summands is the empty set under the convention $\dim(\varnothing)=-1$. 
\end{lemma}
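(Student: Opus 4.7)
The plan is to reduce both identities to the basic fact that for any non-empty set $S$, $\dim S = \dim\spn(S-S)$, and then to exploit the direct-sum hypothesis $\R^n=\R^{n_1}\oplus\R^{n_2}$, which guarantees that $\spn(A-A)\subset\R^{n_1}$ and $\spn(B-B)\subset\R^{n_2}$ meet only at $0$.

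For the Minkowski-sum identity, I would compute $(A\oplus B)-(A\oplus B)=(A-A)+(B-B)$ directly from the definition and pass to spans. Since $\spn(A-A)\cap\spn(B-B)=\{0\}$, these spans are in direct-sum position and the dimensions add, giving $\dim(A\oplus B)=\dim A+\dim B$.

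For the $\ell_1$-sum identity, the key observation is that $\dim\conv(A\cup B)=\dim\aff(A\cup B)$. Fix basepoints $a_0\in A$ and $b_0\in B$ and translate by $-a_0$, so that $\aff(A\cup B)-a_0=\spn\bigl((A-a_0)\cup(B-a_0)\bigr)$. Writing $B-a_0=(b_0-a_0)+(B-b_0)$, this span decomposes as
\[
V_A+\R(b_0-a_0)+V_B,\qquad V_A:=\spn(A-A)\subset\R^{n_1},\ V_B:=\spn(B-B)\subset\R^{n_2}.
\]
The first two summands together have dimension $\dim A+\dim B$ by the direct-sum argument above. The whole proof then hinges on the claim that $b_0-a_0\notin V_A+V_B$, which (after splitting into $\R^{n_1}$ and $\R^{n_2}$ components) is equivalent to saying that $-a_0\in V_A$ and $b_0\in V_B$ both fail simultaneously. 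But $-a_0\in V_A$ is equivalent to $0\in a_0+\spn(A-A)=\aff(A)$, and likewise for $B$; so simultaneous membership would force $0\in\aff(A)\cap\aff(B)$, contradicting the hypothesis. The main (mild) obstacle is just being careful with this last equivalence, since it is exactly where the condition $\aff(A)\cap\aff(B)=\varnothing$ enters. Once established, $\dim(A\oplus_1 B)=\dim A+\dim B+1$ follows.

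Finally, to extend to the convention $\dim(\varnothing)=-1$, I would handle the degenerate cases separately: if $A=\varnothing$ then $A\oplus_1 B=B$ and $\dim A+\dim B+1=-1+\dim B+1=\dim B$; if both are empty then both sides equal $-1$; and the condition $\aff(A)\cap\aff(B)=\varnothing$ holds vacuously since $\aff(\varnothing)=\varnothing$, so no hypothesis is lost.
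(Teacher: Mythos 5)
Your proof is correct, but it takes a genuinely different route from the paper's. The paper argues by exhibiting explicit candidate spanning sets: it picks affinely independent points $\set{a_0,\ldots,a_{k_1}}\subset A$, $\set{b_0,\ldots,b_{k_2}}\subset B$, forms $S_1=\set{a_0,\ldots,a_{k_1},b_0,\ldots,b_{k_2}}$ (for $\oplus_1$) and $S_\infty=\set{a_0+b_0,\ldots,a_0+b_{k_2},a_1+b_0,\ldots,a_{k_1}+b_0}$ (for $\oplus$), checks that their affine hulls contain the respective sums, and then verifies affine independence of each $S_1$, $S_\infty$ by a direct linear-algebra computation (assume a vanishing affine combination, project onto $\R^{n_1}$ and $\R^{n_2}$, and conclude all coefficients vanish); the hypothesis $\aff(A)\cap\aff(B)=\varnothing$ enters in the $\ell_1$-case when showing $\sum\la_j=0$. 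You instead work at the level of subspaces: you reduce everything to the identity $\dim S=\dim\spn(S-S)$, compute $(A\oplus B)-(A\oplus B)=(A-A)+(B-B)$, and for the $\ell_1$-case translate by $a_0$ to write $\aff(A\cup B)-a_0=V_A+\R(b_0-a_0)+V_B$, with the hypothesis entering as the observation that $b_0-a_0\notin V_A+V_B$ (since the $\R^{n_1}$- and $\R^{n_2}$-components $-a_0$ and $b_0$ cannot both lie in $V_A$ and $V_B$ respectively without forcing $0\in\aff(A)\cap\aff(B)$). The two proofs use the same underlying geometric fact, namely the direct-sum position of $\R^{n_1}$ and $\R^{n_2}$, but the paper's proof is more concrete and coordinate-level while yours is more structural, computing dimensions of sums of subspaces instead of verifying independence of chosen point sets; yours is arguably shorter for the $\ell_\infty$-case, while the paper's is entirely self-contained without passing through the identity $\aff(A)=a_0+\spn(A-A)$. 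Both handle the empty-summand convention in the same trivial way.
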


\begin{proof}
If one of the summands is the empty set, say $B=\varnothing$, then $\dim(A\oplus_1\varnothing)=\dim A=\dim A+\dim\varnothing+1$.
If the dimensions of non-empty sets $A$, $B$ are $k_1$, $k_2$ respectively, then there exist affinely independent sets $\set{a_0,a_1,\ldots,a_{k_1}}\subset A$ and $\set{b_0,b_1,\ldots,b_{k_2}}\subset B$. Consider the sets $$S_1=\Set{a_0,a_1,\ldots,a_{k_1},b_0,b_1,\ldots,b_{k_2}}\subset A\oplus_1 B$$ and $$S_\infty=\Set{a_0+b_0, a_0+b_1,\ldots,a_0+b_{k_2},a_1+b_0,\ldots,a_{k_1}+b_0}\subset A\oplus_\infty B.$$ Then, $\aff(S_1)$ contains $A$, $B$, and also their convex hull, so $\aff(S_1)\supset A\oplus_1B$. In addition, $\aff(S_\infty)$ contains $a_0+\aff(B)$, $\aff(A)+b_0$ and their convex hull, so $\aff(S_\infty)\supset A\oplus_\infty B$. So, it remains to prove that each of the sets $S_1$ and $S_\infty$ is affinely independent. To verify the independence of $S_1$, assume that $0\not\in\aff(A)$ and $$\sum_{j=0}^{k_1}\la_j a_j - \sum_{j=0}^{k_2}\mu_j b_j=0$$ where $\la_j$'s and $\mu_j$'s satisfy $\sum_{j=0}^{k_1}\la_j=\sum_{j=0}^{k_2}\mu_j$. It implies $\sum_{j=0}^{k_1}\la_j a_j = \sum_{j=0}^{k_2}\mu_j b_j=0$ because the origin is the only common point of $\spn(A)$ and $\spn(B)$. Since $\aff(A)$ does not contain the origin, then $\sum_{j=0}^{k_1}\la_j=0$; otherwise, $0=\frac{\sum_{j=0}^{k_1}\la_j a_j}{\sum_{j=0}^{k_1}\la_j}\in\aff(A)$. Thus $\sum_j\la_j=0$ and $\sum_j\mu_j=0$. The affine independence of each of $\set{a_0,\ldots,a_{k_1}}$ and $\set{b_0,\ldots,b_{k_2}}$ gives $\la_0=\cdots=\la_{k_1}=0$ and $\mu_0=\cdots=\mu_{k_2}=0$. 
We now prove the affine independence of $S_\infty$. Assume that 
\begin{equation}\label{eq:ell_infty_affine_indep}
\al(a_0+b_0)+\sum_{j=1}^{k_1}\la_j(a_j+b_0) + \sum_{j=1}^{k_2}\mu_j(a_0+b_j)=0
\end{equation}
where $\al$, $\la_j$'s and $\mu$'s satisfy $\al+\sum_{j=1}^{k_1}\la_j+\sum_{j=1}^{k_2}\mu_j=0$. Projecting the equality \eqref{eq:ell_infty_affine_indep} onto $\R^{n_1}$ and $\R^{n_2}$ respectively, we get $$\brR{\al+\sum_{j=1}^{k_2}\mu_j} a_0 + \sum_{j=1}^{k_1}\la_j a_j=0\quad\text{and}\quad\brR{\al+\sum_{j=1}^{k_1}\la_j} b_0 + \sum_{j=1}^{k_2}\mu_j b_j=0.$$ Finally, the affine independence of each of $\set{a_0,\ldots,a_{k_1}}$ and $\set{b_0,\ldots,b_{k_2}}$ gives $\la_1=\cdots=\la_{k_1}=0$, $\mu_1=\cdots=\mu_{k_2}=0$, and $\al=0$. 
\end{proof}

\begin{lemma} \label{centroid_formula}
Let $A$, $B$ be non-empty convex subsets in $\R^{n_1}$, $\R^{n_2}$ respectively, and let $\R^n=\R^{n_1}\oplus\R^{n_2}$. Then the centroid of the $\ell_\infty$-sum of $A$, $B$ is given by $$c(A\oplus B)= c(A) + c(B),$$ where $c(A)$ denotes the centroid of $A$. If $\aff(A)\cap\aff(B)=\varnothing$ (i.e., at least one of $\aff(A)$ and $\aff(B)$ does not contain the origin), then the centroid of the $\ell_1$-sum of $A$, $B$ is $$c(A\oplus_1 B)=\frac{\dim A +1}{\dim A+\dim B +2}\,c(A) + \frac{\dim B +1}{\dim A+\dim B +2}\,c(B).$$ In addition, the second formula holds even if one of the summands is the empty set $\varnothing$ under the conventions $\dim (\varnothing)=-1$ and $c(\varnothing)=0$.
\end{lemma}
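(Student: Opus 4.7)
The plan is to handle the $\ell_\infty$ case first by a direct application of Fubini, and then reduce the $\ell_1$ case to it by slicing $\conv(A\cup B)$ with the level sets of the convex-combination parameter.

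For $A\oplus B$, since $\R^n=\R^{n_1}\oplus\R^{n_2}$ is an orthogonal decomposition and $A\oplus B$ is the product-type set $\set{a+b:a\in A,b\in B}$, Fubini gives $|A\oplus B|=|A||B|$ and
\[
\int_{A\oplus B} p\,dp=\int_A\int_B(a+b)\,db\,da=|B|\int_A a\,da+|A|\int_B b\,db,
\]
from which $c(A\oplus B)=c(A)+c(B)$ is immediate.

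For $A\oplus_1 B=\conv(A\cup B)$, I parametrize by $\phi(\lambda,a,b)=\lambda a+(1-\lambda)b$ on $[0,1]\times A\times B$ and slice by fixing $\lambda$. Set $k_1=\dim A$, $k_2=\dim B$. The slice $S_\lambda=\lambda A+(1-\lambda)B$ is the $\ell_\infty$-sum $(\lambda A)\oplus((1-\lambda)B)$ inside $\R^{n_1}\oplus\R^{n_2}$, so by the just-proved $\ell_\infty$ centroid formula it has $(k_1+k_2)$-volume $\lambda^{k_1}(1-\lambda)^{k_2}|A||B|$ and centroid $\lambda c(A)+(1-\lambda)c(B)$. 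Two facts make the slicing clean: (i) $\phi$ is injective on $(0,1)\times A\times B$, because if $\lambda a+(1-\lambda)b=\lambda' a'+(1-\lambda')b'$, then projecting onto the summands $\R^{n_1}$ and $\R^{n_2}$ gives $\lambda a=\lambda' a'$ and $(1-\lambda)b=(1-\lambda')b'$, and then (assuming $0\notin\aff(A)$) a ray from the origin meets $\aff(A)$ in at most one point, which forces $\lambda=\lambda'$, $a=a'$, and also $b=b'$; (ii) the slicing speed $|h|$, the norm of the component of $\partial_\lambda\phi=a-b$ perpendicular to the common tangent space $\spn(A-A)+\spn(B-B)$ of all the slices, is a nonzero constant, since writing $a-b=(c(A)-c(B))+u-v$ with $u\in\spn(A-A)$, $v\in\spn(B-B)$ shows that the perpendicular component of $a-b$ equals the perpendicular component of $c(A)-c(B)$, which is nonzero exactly because the hypothesis $\aff(A)\cap\aff(B)=\varnothing$ prevents $c(A)-c(B)\in\spn(A-A)+\spn(B-B)$.

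With these two facts the coarea/slicing formula yields
\[
c(\conv(A\cup B))=\frac{\int_0^1 c(S_\lambda)\,|S_\lambda|\,d\lambda}{\int_0^1 |S_\lambda|\,d\lambda}=\frac{c(A)\int_0^1\lambda^{k_1+1}(1-\lambda)^{k_2}d\lambda+c(B)\int_0^1\lambda^{k_1}(1-\lambda)^{k_2+1}d\lambda}{\int_0^1\lambda^{k_1}(1-\lambda)^{k_2}d\lambda},
\]
where the common factor $|h||A||B|$ cancels top and bottom. The Beta-function identity $B(p+1,q)/B(p,q)=p/(p+q)$ then produces the stated weights $(k_1+1)/(k_1+k_2+2)$ and $(k_2+1)/(k_1+k_2+2)$. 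The main obstacle is precisely the justification of the constancy and nonvanishing of $|h|$, which is where the hypothesis $\aff(A)\cap\aff(B)=\varnothing$ (strictly stronger than $\spn(A)\cap\spn(B)=\set{0}$) is essential. The degenerate case $B=\varnothing$ is immediate: $A\oplus_1\varnothing=A$ has centroid $c(A)$, and the conventions $\dim\varnothing=-1$, $c(\varnothing)=0$ collapse the formula to $\tfrac{k_1+1}{k_1+1}c(A)+0=c(A)$.
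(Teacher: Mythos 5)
Your proposal is correct and follows essentially the same route as the paper: Fubini for the $\ell_\infty$-sum, then slicing $\conv(A\cup B)$ by the convex-combination parameter into parallel slices $(1-t)A+tB$, with the constant transversal speed (your $|h|$, the paper's $|\langle\theta,\varphi\rangle|$ with $\theta=c(B)-c(A)$ and $\varphi$ orthogonal to $E=\aff(A)+\aff(B)-c(B)$) guaranteed by $\aff(A)\cap\aff(B)=\varnothing$, and the Beta integrals producing the weights. Your explicit verification of injectivity and of the constancy of the transversal component is a slightly more careful packaging of the same coarea computation.
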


\begin{proof}
Since $c(TA)=Tc(A)$ for any invertible linear transformation $T$ on $\R^n$, we may assume that $\R^n=\R^{n_1}\oplus\R^{n_2}$ is the orthogonal sum of $\R^{n_1}$ and $\R^{n_2}$. Let $k_1=\dim A$, $k_2=\dim B$. First, consider the $\ell_\infty$-sum case. If, say, $k_1=0$, then $c(A\oplus B)=c(c(A)+B)=c(A)+c(B)$. So, we may assume $k_1,k_2\ge1$. The convex sets $A$, $B$ can be viewed as subsets of $\R^{k_1}$, $\R^{k_2}$ respectively. Then
\begin{align*}
c(A\oplus B) &= \frac1{|A\oplus B|}\int_A\int_B (x+y)\,dxdy = \frac1{|A||B|}\int_A\int_B (x+y)\,dxdy \\
&= \frac1{|A|}\int_A x\,dx + \frac1{|B|}\int_B y\,dy = c(A)+c(B),
\end{align*}
where $dx$, $dy$ are the Lebesgue measures on $\R^{k_1}$, $\R^{k_2}$.

For the $\ell_1$-sum case, let $E=\aff(A)+\aff(B)-c(B)$, and $\theta=c(B)-c(A)$. Then, $\te\not\in E-E$ because $E-E-\te=\aff(A)+\aff(B)-c(A)-c(B)-(c(B)-c(A))=\aff(A)-\aff(B)$ does not contain the origin, and 
\begin{align*}
t\theta+E &=t(c(B)-c(A))+\aff(A)+\aff(B)-c(B)\\
&=(1-t)\,\aff(A)+t\,\aff(B).
\end{align*} 
Choose $\varphi\in\Sp\cap\spn(E\cup\set{\te})$ which is orthogonal to $E$.
Thus, from $A\oplus_1B\subset\bigcup\limits_{0\le t\le1}(t\te+E)$, we get $$|A\oplus_1B|=\int_0^1\Abs{(A\oplus_1B)\cap(t\theta+E)}\,\abs{\brA{\theta,\varphi}}\,dt.$$
Since 
\begin{align*}
(A\oplus_1B)\cap(t\theta+E)&=\bigcup_{0\le s\le1}((1-s)A+sB)\cap((1-t)\aff(A)+t\aff(B))\\
&=(1-t)A+tB,
\end{align*}
we get
\begin{align*}
|A\oplus_1B|&=\int_0^1\Abs{(1-t)A+tB}\,\abs{\brA{\theta,\varphi}}\,dt = \abs{\brA{\theta,\varphi}}\int_0^1 \Abs{(1-t)A}\Abs{tB}\,dt\\
&=\abs{\brA{\theta,\varphi}}|A||B|\int_0^1 (1-t)^{k_1}t^{k_2}\,dt = \frac{k_1!k_2!\abs{\brA{\theta,\varphi}}|A||B|}{(k_1+k_2+1)!}.
\end{align*}
Similarly, $$\int_{A\oplus_1B}z\,dz =\int_0^1\brR{\int_{(1-t)A+tB}z\,dz}\abs{\brA{\theta,\varphi}}dt.$$
Note that $\abs{(1-t)A+tB}^{-1}\int_{(1-t)A+tB}x\,dx$ is the centroid of $(1-t)A+tB$, which is equal to $(1-t)c(A)+tc(B)$ by the $\ell_\infty$-case. Thus
\begin{align*}
\int_{A\oplus_1B}x\,dx &=\abs{\brA{\theta,\varphi}}\int_0^1 \Big((1-t)c(A)+tc(B)\Big)\Abs{(1-t)A+tB}\,dt\\
&=\abs{\brA{\theta,\varphi}}|A||B|\brR{c(A)\int_0^1 (1-t)^{k_1+1}t^{k_2}\,dt + c(B)\int_0^1 (1-t)^{k_1}t^{k_2+1}\,dt}\\
& = \frac{k_1!k_2!\abs{\brA{\theta,\varphi}}|A||B|}{(k_1+k_2+1)!}\brR{\frac{k_1+1}{k_1+k_2+2}c(A)+\frac{k_2+1}{k_1+k_2+2}c(B)}
\end{align*}
Dividing the above by $|A\oplus_1B|$, we get the formula for $c(A\oplus_1B)$. The case that a summand contains the empty set is clear.
\end{proof}

The description of extreme points of the $\ell_1$ or $\ell_\infty$ sum of two convex sets is well known (e.g., \cite{We}). By $\ext(A)$ we denote the set of extreme points of a convex set $A$. Then the extreme points of the $\ell_1$ or $\ell_\infty$ sum of two convex sets are described in the following way: if $A\subset\R^{n_1}$, $B\subset\R^{n_2}$ are convex sets and $\R^n=\R^{n_1}\oplus\R^{n_2}$, then 
\begin{equation}\label{eq:extremepoints}
\ext(A\oplus_1B)=\ext(A)\cup\ext(B)\quad\text{and}\quad\ext(A\oplus B)=\ext(A)+\ext(B).
\end{equation}
In particular, if $A$, $B$ are polytopes, then it gives a characterization of the vertices (zero-dimensional faces) of the sum $A\oplus_1B$ or $A\oplus B$. Moreover, it can be generalized to the faces of any dimension as follows.

\begin{lemma} \label{lem:sum_faces}
Let $P_1\subset\R^{n_1}$, $P_2\subset\R^{n_2}$ be convex polytopes containing the origin, and let $\R^n=\R^{n_1}\oplus\R^{n_2}$. Then, $F$ is a face of $P_1\oplus_1P_2$ if and only if $F$ is of the form $$F_1\oplus_1\varnothing,\quad\varnothing\oplus_1F_2,\quad\text{or}\quad F_1\oplus_1 F_2,$$
where $F_1$, $F_2$ are faces of $P_1$, $P_2$. In case of the $\ell_\infty$-sum, $F$ is a face of $P_1\oplus_\infty P_2$ if and only if $F$ is of the form $$F_1\oplus P_2,\quad P_1\oplus F_2,\quad\text{or}\quad F_1\oplus F_2,$$
where $F_1$, $F_2$ are faces of $P_1$, $P_2$. 
\end{lemma}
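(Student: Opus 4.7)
The plan is to use the exposing-functional characterization of faces: a set $F\subset P$ is a face of a polytope $P$ if and only if $F=\{x\in P:\ell(x)=\max_P\ell\}$ for some linear functional $\ell$ on $\R^n$, with the improper face $F=P$ corresponding to $\ell\equiv 0$. The direct-sum decomposition $\R^n=\R^{n_1}\oplus\R^{n_2}$ lets me write every such $\ell$ uniquely as $\ell=\ell_1+\ell_2$, where $\ell_i$ vanishes on $\R^{n_{3-i}}$. For each of the two sums I will compute the maximum of $\ell$ and identify the set of maximizers in terms of the faces of $P_1$ and $P_2$ individually exposed by $\ell_1$ and $\ell_2$.

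For the $\ell_\infty$-sum case, the key identity is the additive separation
$$\max_{P_1+P_2}(\ell_1+\ell_2)=\max_{P_1}\ell_1+\max_{P_2}\ell_2,$$
from which the face of maximizers equals $F_1+F_2=F_1\oplus F_2$, where $F_i\subset P_i$ is the (possibly improper) face of $P_i$ exposed by $\ell_i$. The three listed forms then correspond to $\ell_1\equiv 0$ (giving $P_1\oplus F_2$), $\ell_2\equiv 0$ (giving $F_1\oplus P_2$), or both $\ell_i\not\equiv 0$ (giving $F_1\oplus F_2$ with both $F_i$ proper); conversely any such pair $(F_1,F_2)$ is realized by choosing $\ell_i$ to expose $F_i$ on $P_i$ separately and combining. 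For the $\ell_1$-sum case, since $P_1\oplus_1 P_2=\conv(P_1\cup P_2)$, one has instead
$$\max_{\conv(P_1\cup P_2)}\ell=\max\bigl(\max_{P_1}\ell_1,\;\max_{P_2}\ell_2\bigr),$$
and a short convexity argument on combinations $\lambda p_1+(1-\lambda)p_2$ shows the face of maximizers is $F_1$ alone when $\max_{P_1}\ell_1>\max_{P_2}\ell_2$ (giving $F_1\oplus_1\varnothing$), is $F_2$ alone in the reverse strict case, and is $\conv(F_1\cup F_2)=F_1\oplus_1 F_2$ when the two maxima coincide. For the converse one rescales each $\ell_i$ to achieve the desired strict inequality or equality between $\max_{P_1}\ell_1$ and $\max_{P_2}\ell_2$.

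I expect the main technical point to be the bookkeeping around faces passing through the origin, since then both $\max_{P_i}\ell_i$ are forced to vanish and the $\ell_1$-trichotomy collapses; the hypothesis $0\in P_1\cap P_2$ plays a role here through the non-negativity of $\max_{P_i}\ell_i$, and one needs to argue that each listed form is genuinely realized without overcounting or conflating forms. The $\ell_\infty$-sum is comparatively transparent because the maximum separates additively, and any face of the sum projects canonically onto faces of the two summand subspaces.
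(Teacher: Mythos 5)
Your argument is correct, and for the $\ell_\infty$-sum it is essentially the same as the paper's: the paper also decomposes the normal $\theta$ of the supporting hyperplane as a pair of functionals on $\R^{n_1}$ and $\R^{n_2}$ and uses the additive separation of the maximum to write $F=(P_1\cap(x_1+\theta^\perp))+(P_2\cap(x_2+\theta^\perp))$. For the $\ell_1$-sum you take a genuinely different route. The paper instead invokes the extreme-point identity $\ext(P_1\oplus_1 P_2)=\ext(P_1)\cup\ext(P_2)$ to write $F=F_1\oplus_1 F_2$ directly, and then recovers $F_j$ as $(A\cap\R^{n_j})\cap P_j$, which is a face since $A\cap\R^{n_j}$ is a hyperplane in $\R^{n_j}$; your argument skips the extreme-point identity entirely and instead runs the trichotomy on $\max_{P_1}\ell_1$ versus $\max_{P_2}\ell_2$. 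Your approach has the advantage of treating the two sums uniformly and of delivering both implications of the equivalence at once (the paper's proof only makes the ``face $\Rightarrow$ listed form'' direction explicit). The price is exactly the bookkeeping you flag: the trichotomy is clean only when $\ell_j\not\equiv 0$ forces $\max_{P_j}\ell_j>0$, i.e.\ when each $P_j$ contains the origin in its interior (so that no proper face passes through $0$). The paper's wording here says only ``containing the origin,'' but its argument has the same hidden dependence, since it needs $A\cap\R^{n_j}$ to be a proper hyperplane of $\R^{n_j}$, which again fails only if a supporting hyperplane contains a whole summand. So neither of you is losing anything by working with interior origin, and in the paper's intended application ($P$ a Hanner polytope with $0$ in the interior) both versions are fine. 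In short: same approach for $\ell_\infty$, a different but valid and slightly more symmetric approach for $\ell_1$, with the same interior-origin caveat as the original.
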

\begin{proof}
First, consider the $\ell_1$-sum case. Let $F$ be a face of $P_1\oplus_1P_2$. Then there exists a supporting hyperplane $A$ of $P_1\oplus_1P_2$ such that $F=A\cap(P_1\oplus_1P_2)$. By \eqref{eq:extremepoints}, every face $F$ of $P_1\oplus_1P_2$ can be expressed as $F=F_1\oplus_1F_2$ where $F_j$ is the convex hull of some of extreme points of $P_j$ for $j=1,2$. In addition, for each $j=1,2$,
\begin{align*}
(A\cap\R^{n_j})\cap P_j &= A\cap(\R^{n_j}\cap(P_1\oplus_1P_2))=\R^{n_j}\cap (A\cap(P_1\oplus_1P_2))=\R^{n_j}\cap F\\
&=\R^{n_j}\cap (F_1\oplus_1F_2) = F_j.
\end{align*}
If $F_j\neq\varnothing$, then $A\cap\R^{n_j}$ is a hyperplane in $\R^{n_1}$ with $(A\cap\R^{n_j})\cap P_j=F_j$, which means that $F_j$ is a face of $P_j$. 

For the $\ell_\infty$-sum case, let $F$ be a face of $P_1\oplus_\infty P_2$. Then there exists a hyperplane $A$ of $P_1\oplus_\infty P_2$ with $F=A\cap(P_1\oplus_\infty P_2)$. Write $A=x_1+x_2+\te^\perp$ for $x_1\in P_1$, $x_2\in P_2$ with $x_1+x_2\in F$. Note that $\brA{y_1+x_2,\te}\le\brA{x_1+x_2,\te}$ and $\brA{x_1+y_2,\te}\le\brA{x_1+x_2,\te}$ for each $y_1\in P_1$, $y_2\in P_2$. It implies $\brA{y_1,\te}\le\brA{x_1,\te}$ and $\brA{y_2,\te}\le\brA{x_2,\te}$ for each $y_1\in P_1$, $y_2\in P_2$. Thus, $\brA{y_1+y_2,\te}=\brA{x_1+x_2,\te}$ if and only if $\brA{y_1,\te}=\brA{x_1,\te}$ and $\brA{y_1,\te}=\brA{x_1,\te}$. In other words,
\begin{align*}
F &= (P_1\oplus_\infty P_2)\cap A = (P_1+P_2)\cap(x_1+x_2+\te^\perp)\\
&= P_1\cap(x_1+\te^\perp) + P_2\cap(x_2+\te^\perp).
\end{align*}
Let $F_j=P_j\cap(x_j+\te^\perp)$ for each $j=1,2$. If $\te\in\R^{n_j}$ for $j=1$ or $2$, then $F_j$ is a face of $P_j$ and the other summand is an improper face. Otherwise, each $F_j$ is a face of $P_j$ for $j=1,2$. 
\end{proof}

\begin{lemma} \label{lem:sum_dual_face} Let $P_1\subset\R^{n_1}$, $P_2\subset\R^{n_2}$ be convex polytopes containing the origin in their interiors, and let $\R^n=\R^{n_1}\oplus\R^{n_2}$. Then, the dual face of a face of the $\ell_1$ or $\ell_\infty$ sum of $P_1$ and $P_2$ is given by
\begin{align*}
&(F_1\oplus_1 \varnothing)^*=F_1^* \oplus P_2^\circ,&&(\varnothing\oplus_1 F_2)^*=P_1^\circ \oplus F_2^*,&&(F_1\oplus_1 F_2)^*=F_1^* \oplus F_2^*,\\
&(F_1\oplus P_2)^*=F_1^* \oplus_1 \varnothing,&&(P_1\oplus F_2)^*=\varnothing\oplus_1 F_2^*,&&(F_1\oplus F_2)^*=F_1^* \oplus_1 F_2^*, 
\end{align*}
where $F_1$, $F_2$ are faces of polytopes $P_1$, $P_2$ respectively, and $F^*$ denotes the dual face of a face $F$.
\end{lemma}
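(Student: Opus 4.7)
The plan is to combine two standard polarity identities---namely $(P_1\oplus_1 P_2)^\circ=P_1^\circ\oplus P_2^\circ$ and $(P_1\oplus P_2)^\circ=P_1^\circ\oplus_1 P_2^\circ$---with straightforward case analysis built on the decomposition $\R^n=\R^{n_1}\oplus\R^{n_2}$, which we may take to be orthogonal exactly as in the reduction at the start of the proof of Lemma \ref{centroid_formula}. Under orthogonality, $\brA{x_1,y_2}=0$ whenever $x_1\in\R^{n_1}$ and $y_2\in\R^{n_2}$, and this is essentially the only algebraic fact we will need.

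For the three $\ell_1$ cases, each $y\in(P_1\oplus_1 P_2)^\circ=P_1^\circ\oplus P_2^\circ$ decomposes uniquely as $y=y_1+y_2$ with $y_j\in P_j^\circ$. Since $F_1\oplus_1 F_2=\conv(F_1\cup F_2)$, the defining condition $\brA{x,y}=1$ for all $x\in F_1\oplus_1 F_2$ is equivalent to $\brA{x_j,y}=1$ for all $x_j\in F_j$ and each nonempty $F_j$. Orthogonality reduces this to $\brA{x_j,y_j}=1$ on $F_j$, i.e.\ $y_j\in F_j^*$; if $F_j=\varnothing$, the corresponding summand is unconstrained and ranges over all of $P_j^\circ$. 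This simultaneously yields the three formulas $(F_1\oplus_1\varnothing)^*=F_1^*\oplus P_2^\circ$, $(\varnothing\oplus_1 F_2)^*=P_1^\circ\oplus F_2^*$, and $(F_1\oplus_1 F_2)^*=F_1^*\oplus F_2^*$.

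For the three $\ell_\infty$ cases, each $y\in(P_1\oplus P_2)^\circ=P_1^\circ\oplus_1 P_2^\circ$ has a representation $y=(1-\la)y_1+\la y_2$ with $y_j\in P_j^\circ$ and $\la\in[0,1]$. If $F=F_1\oplus F_2$ with both $F_j$ proper, then for $x=x_1+x_2\in F$ orthogonality gives $\brA{x,y}=(1-\la)\brA{x_1,y_1}+\la\brA{x_2,y_2}$; since each inner product is at most $1$, the identity $\brA{x,y}=1$ for all $(x_1,x_2)\in F_1\times F_2$ forces $\brA{x_1,y_1}=1$ on $F_1$ when $\la<1$ and $\brA{x_2,y_2}=1$ on $F_2$ when $\la>0$, giving $y\in F_1^*\oplus_1 F_2^*$; the reverse inclusion follows from the same identity. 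For the improper case $F=F_1\oplus P_2$, varying $x_2$ freely over $P_2$ in $\brA{x_1+x_2,y}=1$ forces $\la\brA{x_2,y_2}$ to be constant in $x_2$, and since $0\in{\rm int}(P_2)$ this constant must be $0$, so $\la y_2=0$ and $y$ collapses to a point of $P_1^\circ$ with $\brA{x_1,y}=1$ on $F_1$, namely $y\in F_1^*=F_1^*\oplus_1\varnothing$ (and the case $P_1\oplus F_2$ is symmetric).

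The main obstacle is nothing deep---it is purely bookkeeping, making sure that all degenerate cases (improper summands $\varnothing$ or $P_j$, and endpoint values $\la\in\{0,1\}$) are consistent with the uniform formulas stated. As a sanity check, the dimension identity $\dim F^*=n-1-\dim F$ combined with Lemma \ref{lem:dim_sum} confirms that every pair of sides in the six formulas has matching dimension; for example, $\dim(F_1^*\oplus_1 F_2^*)=\dim F_1^*+\dim F_2^*+1=n-1-\dim(F_1\oplus F_2)$.
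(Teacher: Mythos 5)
Your proof is correct, and it takes a genuinely different route from the paper's. The paper argues only the $\ell_1$ case directly, and does so by dimension counting rather than by computing the polar pointwise: it notes that $F_1^*\oplus F_2^*$ is a face of $P_1^\circ\oplus P_2^\circ$ by Lemma \ref{lem:sum_faces}, that the inclusion $F_1^*\oplus F_2^*\subset(F_1\oplus_1F_2)^*$ is immediate, and that the two faces have equal dimension by Lemma \ref{lem:dim_sum} combined with $\dim F^*=n-1-\dim F$; equality then follows because a face of a polytope is recovered as the intersection of the polytope with the face's affine hull. The three $\ell_\infty$ formulas are then deduced for free by applying the $\ell_1$ formulas to $G_j=F_j^*$ and dualizing, so no separate computation is needed there. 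You instead decompose an arbitrary $y\in(P_1\oplus_p P_2)^\circ=P_1^\circ\oplus_q P_2^\circ$ and analyze the condition $\brA{x,y}=1$ termwise via orthogonality; this is more elementary and self-contained (you never invoke Lemma \ref{lem:dim_sum} or the dual-face dimension formula, and you obtain both inclusions explicitly), at the cost of the case bookkeeping over improper summands and the endpoints $\la\in\{0,1\}$, all of which you handle correctly. One minor caveat: the reduction to an orthogonal decomposition is not literally ``as in Lemma \ref{centroid_formula}'', since polars and dual faces transform under a linear map $T$ by $(T^{-1})^t$ rather than by $T$ itself; but the paper's own proof tacitly adopts the same orthogonal-sum convention when it writes $(P_1\oplus_1P_2)^\circ=P_1^\circ\oplus P_2^\circ$, so this is a presentational point rather than a gap. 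Your closing dimension check is, incidentally, exactly the identity the paper uses as its main engine.
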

\begin{proof}
Let $F=F_1\oplus_1F_2$ where each $F_j$ is $\varnothing$ or a face of $P_j$ for $j=1,2$. Then, by Lemma \ref{lem:sum_faces}, $F_1^*\oplus F_2^*$ is a face of $P_1^\circ\oplus P_2^\circ$, and clearly $F_1^*\oplus F_2^*\subset F^*$. Moreover, it follows from Lemma \ref{lem:dim_sum} and the formula $\dim F^*=n-1-\dim F$ that the dimension of $F_1^* \oplus F_2^*$  is the same as that of $F^*$. So, $\aff(F_1^*\oplus F_2^*)=\aff(F^*)$. It implies
\begin{align*}
(F_1\oplus_1F_2)^* &= F^*=\aff(F^*)\cap(P_1\oplus_1P_2)^\circ =\aff(F_1^*\oplus F_2^*)\cap(P_1^\circ\oplus P_2^\circ) \\
&=(\aff(F_1^*)\cap P_1^\circ)\oplus(\aff(F_2^*)\cap P_2^\circ) =F_1^*\oplus F_2^*.
\end{align*}
whenever each $F_j$ is $\varnothing$ or a face $P_j$ for $j=1,2$. In addition, replacing $F_1^*$, $F_2^*$ with $G_1$, $G_2$, we get $(G_1^*\oplus G_2^*)^*=G_1\oplus_1 G_2$. It implies $(G_1\oplus_1 G_2)^*=G_1^*\oplus G_2^*$ whenever each $G_j$ is $P_j$ or a face $P_j$.
\end{proof}

\begin{remark}[{{\it More on the faces}}]\label{rmk:face_form} 
We describe the faces of the sum of two polytopes. Lemma \ref{lem:dim_sum} and Lemma \ref{lem:sum_faces} make it possible to describe the faces over the sum with the dimensions of summands. Let $P_1\subset\R^{n_1}$, $P_2\subset\R^{n_2}$ be convex polytopes, and let $\R^n=\R^{n_1}\oplus\R^{n_2}$. 
For each $k=1,\ldots,n$, every face $F^{k-1}$ of dimension $k-1$ of $P_1\oplus_1 P_2$ is of the form
\begin{equation}\label{face_form1}
F^{k-1}=F_1^{k_1-1}\oplus_1 F_2^{k_2-1},
\end{equation}
and every face $F^{n-k}$ of dimension $n-k$ of $P_1\oplus_\infty P_2$ is of the form
\begin{equation}\label{face_form2}
F^{n-k}=F_1^{n_1-k_1}\oplus F_2^{n_2-k_2},
\end{equation}
where $0\le k_1\le n_1$, $0\le k_2\le n_2$, $k_1+k_2=k$, and each $F_j^d$ denotes a face or an improper face of $P_j$ of dimension $d$ for $j=1,2$.
\end{remark}

If a face $F=F_1\oplus_1 F_2$ (respectively $F_1\oplus F_2$) is contained in another face $G=G_1\oplus_1 G_2$ (respectively $G_1\oplus G_2$), then it follows from \eqref{eq:extremepoints} that $F_1\subset G_1$ and $F_2\subset G_2$. Furthermore, if $F\subset G$ with $\dim G=\dim F+1$, then we have, by Lemma \ref{lem:dim_sum}, either $$F_1=G_1,\, F_2\subsetneq G_2,\, \dim G_2=\dim F_2+1,\quad\text{or}\quad F_2=G_2,\, F_1\subsetneq G_1,\, \dim G_1=\dim F_1+1.$$ 
We introduce the following notion to describe a flag over the sum of two polytopes.

\begin{definition}\label{def:type}
Let $P_1$, $P_2$ be convex polytopes in $\R^{n_1}$, $\R^{n_2}$ respectively, and $\R^n=\R^{n_1}\oplus\R^{n_2}$.
A flag $\F=\set{F^0,\ldots,F^{n-1}}$ over $P_1\oplus_1P_2$ is of {\it type $(j_1,\ldots,j_n)\in\set{1,2}^n$} if each element of $\F$ is obtained in the following way:
\begin{enumerate}
\item The face $F^0$ of dimension $0$ is the $\ell_1$-sum of a zero-dimensional face of $P_{j_1}$ and $\varnothing$.
\item For $2\le k\le n$, the face $F^{k-1}$ of dimension $k-1$ is obtained from the $\ell_1$-decomposition of $F^{k-2}$ by increasing the dimension of the $j_k$-th summand (a face of $P_{j_k}$), i.e., if $F^{k-2}=F_1\oplus_1F_2$, then 
\begin{equation*}
F^{k-1}=\begin{cases}
\widetilde{F}_1\oplus_1F_2,&\quad\text{if }j_k=1\\
F_1\oplus_1\widetilde{F}_2,&\quad\text{if }j_k=2
\end{cases}
\end{equation*}
where each $\widetilde{F}_j$ is a face of $P_j$ of dimension $1+\dim F_j$ containing $F_j$ for $j=1,2$.
\end{enumerate}
In case of the $\ell_\infty$ sum, we say that a flag $\F=\set{F^0,\ldots,F^{n-1}}$ over $P_1\oplus_\infty P_2$ is of \emph{type $(j_1,\ldots,j_n)$} if its dual flag $\F^*=\set{(F^{n-1})^*,\ldots,(F^0)^*}$ is of type $(j_1,\ldots,j_n)$, as a flag over $P_1^\circ\oplus_1P_2^\circ$.
\end{definition}

\begin{remark}[{{\it Description of flags}}]\label{rmk:flag_form} 
Note that every flag over the $\ell_1$ or $\ell_\infty$ sum in $\R^n=\R^{n_1}\oplus\R^{n_2}$ of $P_1\subset\R^{n_1}$ and $P_2\subset\R^{n_2}$ has a type as an $n$-tuple consisting of $n_1$ copies of $1$ and $n_2$ copies of $2$, that is, an element of
\begin{equation}\label{flag_tuples}
J=\set{(j_1,\ldots,j_n)\in\set{1,2}^n\Big|\,\,\substack{\#\set{k:j_k=1}=n_1\\ \#\set{k:j_k=2}=n_2}\,}.
\end{equation}
In addition, for $p=1$ or $\infty$, every flag $\F$ over the $\ell_p$-sum of $P_1$ and $P_2$ gives two flags $\F_1$, $\F_2$ in lower dimensions, defined by
\begin{align}\label{lower_flags}
\begin{split}
\F_1&=\set{F_1 \,\Big|\, F_1\oplus_p F_2\in\F\text{ for some $F_2$},\,\,0\le\dim F_1<n_1}\\
\F_2&=\set{F_2 \,\Big|\, F_1\oplus_p F_2\in\F\text{ for some $F_1$},\,\,0\le\dim F_2<n_2} .
\end{split}
\end{align}
Every flag $\F$ is uniquely determined by lower dimensional flags $\F_1$, $\F_2$ and a type $\sigma\in J$. Moreover, two flags $\F_1=\{F_1^0,\ldots,F_1^{n_1-1}\}$, $\F_2=\{F_2^0,\ldots,F_2^{n_2-1}\}$ in lower dimensions and a type $\sigma=(j_1,\ldots,j_n)\in J$ can be associated with the flag $\F=\{F^0,\ldots,F^{n-1}\}$ defined by, for $1\le k\le n$,
\begin{align}
F^{k-1}&=F_1^{\sigma_1(k)-1}\oplus_1 F_2^{\sigma_2(k)-1} \tag{$\ell_1$ -sum case} \\
F^{n-k}&=F_1^{n_1-\sigma_1(k)}\oplus F_2^{n_2-\sigma_2(k)} \tag{$\ell_\infty$-sum case}
\end{align}
where $\sigma_j(k)$ denotes the number of $j$'s among the first $k$ entries $j_1,\ldots,j_k$ of $\sigma=(j_1,\ldots,j_n)$ for each $j=1,2$. We can also see that this flag constructed from $\F_1$, $\F_2$ and $\sigma$ is of type $\sigma$ and has the lower dimensional flags $\F_1$, $\F_2$. Consequently, every flag $\F$ over the $\ell_1$ or $\ell_\infty$ sum of $P_1$ and $P_2$ can be considered as an triple of a type $\sigma\in J$ and two lower dimensional flags $\F_1$, $\F_2$ over $P_1$, $P_2$.
\end{remark}

\begin{example}
Let $\F_1=\set{F_1^0,F_1^1,F_1^2,F_1^3}$ and $\F_2=\set{F_2^0,F_2^1,F_2^2}$ be flags over a polytope $P_1\subset\R^4$ and a polytope $P_2\subset\R^3$ respectively. Then $$\F=\Set{ F_1^0\oplus_1\varnothing, F_1^1\oplus_1\varnothing, F_1^1\oplus_1 F_2^0,  F_1^2\oplus_1 F_2^0,  F_1^2\oplus_1 F_2^1,  F_1^2\oplus_1 F_2^2,  F_1^3\oplus_1 F_2^2 }$$ is a flag over the $\ell_1$-sum of type $(1,1,2,1,2,2,1)$. For the $\ell_\infty$-sum case, consider the flag $$\F'=\Set{F_1^0\oplus F_2^0,  F_1^1\oplus F_2^0,  F_1^1\oplus F_2^1,  F_1^1\oplus F_2^2, F_1^2\oplus F_2^2, F_1^2\oplus P_2, F_1^3\oplus P_2}.$$ By taking the dual flag, we can see that it has the same type as $\F'$, which is $(1,1,2,1,2,2,1)$.
\end{example}

\section{Construction on Hanner polytopes}\label{Hanner_polytopes}

A symmetric convex body $H$ is called a {\it Hanner polytope} if $H$ is one-dimensional, or it is the $\ell_1$ or $\ell_\infty$ sum of two (lower dimensional) Hanner polytopes. 
Thus every Hanner polytope in $\R^n$ can be obtained from $n$ symmetric intervals (one-dimensional Hanner polytopes) by taking the $\ell_1$ or $\ell_\infty$ sums $(n-1)$ times. For example, $H=((I_1\oplus_1 I_2)\oplus_\infty I_3)\oplus_1(I_4\oplus_\infty I_5)$ is a Hanner polytope in $\R^5$ obtained from symmetric intervals $I_1,\ldots,I_5\subset\R^5$. Note that every summand in the representation of $H$ in terms of symmetric intervals is also a (lower dimensional) Hanner polytope. 

For each Hanner polytope, we now define very particular affine subspaces $A\sub{F}$ satisfying (a), (b), (c) in Section \ref{general_construction} to guarantee the conditions \eqref{equal_volumes}, \eqref{differential_zero} in Propositions \ref{prop:split_equal_volumes}, \ref{prop:vol_difference}.  
When a representation of a Hanner polytope $H$ in terms of symmetric intervals is given, we define the affine subspaces $A\sub{F}$'s inductively for all lower dimensional Hanner polytopes of $H$ which appear as a summand in the representation of $H$.

\begin{definition}\label{def:AF}
Define the affine subspaces $A\sub{F}$'s inductively for any Hanner polytope $H$ in the following way: 
\begin{enumerate}
\item If $H$ is one-dimensional, $H$ has the only two faces (of dimension zero). For each face $F$ of a one-dimensional Hanner polytope $H$, define the affine subspace $A\sub{F}$ by $A\sub{F}=F.$
\item Let $H$ be a Hanner polytope obtained from two lower dimensional Hanner polytopes $H_1$ and $H_2$ by taking the $\ell_1$ or $\ell_\infty$-sum. Then, for the $\ell_1$ sum case, the affine subspace $A\sub{F}$ for each face of the form $F_1\oplus_1\varnothing$, $\varnothing\oplus_1F_2$, or $F_1\oplus_1F_2$ is defined by 
\begin{align}\label{def_AF1}
&A\sub{F_1\oplus_1\,\varnothing\,} = A\sub{F_1}+\spn(H_2), \qquad A\sub{\,\varnothing\,\oplus_1F_2} = \spn(H_1)+A\sub{F_2},\notag \\
&A\sub{F_1\oplus_1F_2} = \frac{\dim F_1+1}{\dim F_1+\dim F_2+2}\,A\sub{F_1} + \frac{\dim F_2+1}{\dim F_1+\dim F_2+2}\,A\sub{F_2}.
\end{align}
For the $\ell_\infty$ sum case, it is defined by
\begin{align}\label{def_AF2}
&A\sub{F_1\oplus H_2} = A\sub{F_1},\qquad A\sub{H_1\oplus F_2} = A\sub{F_2} \notag \\
&A\sub{F_1\oplus F_2} = A\sub{F_1} + A\sub{F_2} + \spn\Set{\frac{c\sub{F_1}}{\dim H_1\!-\!\dim F_1}-\frac{c\sub{F_2}}{\dim H_2\!-\!\dim F_2}},
\end{align}
where each $c\sub{F_j}$ is the centroid of a face $F_j$ of $H_j$ for $j=1,2$.
\end{enumerate}
\end{definition}

\begin{lemma}
Let $H$ be any Hanner polytope in $\R^n$, and $F$ a face of $H$. Consider the affine subspace $A\sub{F}$ defined as above. Then $A\sub{F}$ satisfies the conditions {\rm(a), (b), (c)} from Section \ref{general_construction}. Moreover, the unique common point $c\sub{F}$ of $H$ and  $A\sub{F}$ is the centroid of $F$.
\end{lemma}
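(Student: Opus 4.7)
I proceed by induction on the dimension $n$ of the Hanner polytope $H$, verifying all four assertions (conditions (a), (b), (c), and the centroid identity) simultaneously. The base case $n=1$ is immediate: for $H=[-a,a]$ and $F=\set{a}$, one has $A_F=F=\set{a}$, the centroid of $F$ is $a$, the dimension identity $0+0=0$ holds, and for the dual face $F^*=\set{1/a}$ the pairing is $\brA{a,1/a}=1$.

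For the inductive step, write $H=H_1\oplus_p H_2$ with $p\in\set{1,\infty}$ and enumerate the face types via Lemma \ref{lem:sum_faces}. The centroid identity $c_F=c(F)$ in every case follows from the inductive hypothesis and Lemma \ref{centroid_formula}; the coefficients $(\dim F_j+1)/(\dim F+1)$ chosen in Definition \ref{def:AF} are precisely those predicted by that lemma for the centroid of an $\ell_1$-sum. The dimension identity (b) reduces to bookkeeping from $\dim A_{F_j}=n_j-1-\dim F_j$ combined with Lemma \ref{lem:dim_sum}; the one subtle point is the $\ell_\infty$-sum case $F=F_1\oplus F_2$, where I must check that the added direction $v_0=c_{F_1}/(n_1-\dim F_1)-c_{F_2}/(n_2-\dim F_2)$ lies outside the linear part of $A_{F_1}+A_{F_2}$. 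This follows from $0\notin A_{F_j}$: otherwise $0\in A_{F_j}\cap H_j=\set{c_{F_j}}$, contradicting that $c_{F_j}$ lies on $\partial H_j$ while $0\in{\rm int}(H_j)$.

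Condition (c) unfolds by bilinearity, using the inductive identity $\brA{a_j,c_{F_j^*}}=1$ for $a_j\in A_{F_j}$ (applied with $c_{F_j^*}\in A_{F_j^*}$) together with the complementarity $\brA{\R^{n_1},\R^{n_2}}=0$. In the representative case $F=F_1\oplus_1 F_2$, the dual is $F^*=F_1^*\oplus F_2^*$ (Lemma \ref{lem:sum_dual_face}), so a generic element $b\in A_{F^*}$ has the form $b=b_1+b_2+sv_0^*$ with $v_0^*=c_{F_1^*}/(\dim F_1+1)-c_{F_2^*}/(\dim F_2+1)$. Expanding $\brA{\lambda a_1+\mu a_2,\,b_1+b_2+sv_0^*}$ yields $\lambda+\mu+s\brS{\lambda/(\dim F_1+1)-\mu/(\dim F_2+1)}$, and the bracketed quantity vanishes because the weights $\lambda=(\dim F_1+1)/(\dim F+1)$, $\mu=(\dim F_2+1)/(\dim F+1)$ were designed precisely to equalize the two fractions. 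The remaining cases are analogous or simpler.

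The main obstacle is property (a) for the mixed-summand cases. For $F=F_1\oplus_1 F_2$, the plan is to use the unique $\ell_1$-decomposition $p=\lambda a_1+\mu a_2$ (coming from complementarity of $\R^{n_1},\R^{n_2}$) together with the convex-hull representation $p=(1-t)x+ty$ with $x\in H_1$, $y\in H_2$, $t\in[0,1]$ available for any $p\in H$. Matching $\R^{n_j}$-components gives $\lambda a_1=(1-t)x$ and $\mu a_2=ty$. The endpoints $t\in\set{0,1}$ are ruled out by $0\notin A_{F_j}$. For $t\in(0,1)$, the induction on (c) tells us that $A_{F_j}$ lies in the supporting hyperplane $\set{z:\brA{z,c_{F_j^*}}=1}$ of $H_j$, and applying the bound $\brA{z,c_{F_j^*}}\le1$ on $H_j$ to $x=\frac{\lambda}{1-t}a_1$ and $y=\frac{\mu}{t}a_2$ forces $\lambda/(1-t)\le1$ and $\mu/t\le1$; together these pin down $t=\mu$, and then $a_j\in A_{F_j}\cap H_j=\set{c_{F_j}}$, so $p=c_F$. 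The $\ell_\infty$-sum case $F=F_1\oplus F_2$ runs in parallel: decomposing $p=a_1+a_2+sv_0\in H_1+H_2$ and projecting, the same hyperplane bound applied on the $H_1$ side and the $H_2$ side pins the scalar $s$ between $0$ and $0$, forcing $s=0$ and hence $p=c_F$. The assertion $c_F\in{\rm int}(F)$ follows at once from the inductive $c_{F_j}\in{\rm int}(F_j)$ and the description of the relative interior of an $\ell_1$- or $\ell_\infty$-sum.
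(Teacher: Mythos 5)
Your proof is correct and follows the same inductive architecture as the paper's: induction on the dimension, the case split of Lemma \ref{lem:sum_faces}, Lemma \ref{centroid_formula} for the centroid identity, Lemma \ref{lem:dim_sum} for the dimension count in (b), and the same bilinear expansion for (c). The one place where you genuinely diverge is the uniqueness step in (a) for the mixed faces $F_1\oplus_1F_2$ and $F_1\oplus F_2$. The paper rules out the bad scaling factors by a dilation argument: if $r>1$ and $rA\sub{F_1}$ met $H_1$, then $A\sub{F_1}$ would contain an interior point of $H_1$ and so could not meet $H_1$ in a single boundary point, contradicting the inductive (a). You instead invoke the inductive condition (c) to place $A\sub{F_j}$ inside the supporting hyperplane $\set{z:\brA{z,c\sub{F_j^*}}=1}$ of $H_j$ and then squeeze the convex-combination parameter (resp.\ the scalar $s$ in the $\ell_\infty$ case) from both sides, pinning $\lambda=1-t$, $\mu=t$ (resp.\ $s=0$). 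Both arguments are sound; yours is slightly more quantitative but uses the inductive (c) and the membership $c\sub{F_j^*}\in H_j^\circ$ inside the proof of (a), whereas the paper's uses only the inductive (a). You also make explicit a point the paper leaves implicit in (b), namely that the extra direction $c\sub{F_1}/(n_1-\dim F_1)-c\sub{F_2}/(n_2-\dim F_2)$ lies outside the direction space of $A\sub{F_1}+A\sub{F_2}$ because $0\notin A\sub{F_j}$; that is a worthwhile addition.
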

\begin{proof}
We use the induction on the dimension $n$ of $H$. The statement is trivial when $\dim H=1$.
Let $H\subset\R^n$ be the $\ell_1$ or $\ell_\infty$ sum of lower dimensional Hanner polytopes $H_1\subset\R^{n_1}$, $H_2\subset\R^{n_2}$. Assume that the affine subspaces $A\sub{F_j}$ for any face $F_j$ of $H_j$ satisfy the conditions (a), (b), (c) in Section \ref{general_construction}, and $H\cap A\sub{F_j}=\{c\sub{F_j}\}$ for $j=1,2$. First consider the case that a face $F$ of $H$ contains an improper face as a summand. If, say, $F=F_1\oplus_1\varnothing$ or $F_1\oplus H_2$, then the affine subspace $A\sub{F}$ is given by $A\sub{F_1\oplus_1\varnothing}=A\sub{F_1}+\R^{n_2}$ or $A\sub{F_1\oplus H_2}=A\sub{F_1}$. We can see that all conditions (a), (b), (c) hold under the induction hypothesis. For (a), note that $H\cap A\sub{F_1\oplus_1\varnothing}=(H_1\oplus_1H_2)\cap(A\sub{F_1}+\R^{n_2})=H_1\cap A\sub{F_1}$ and $H\cap A\sub{F_1\oplus H_2}=(H_1\oplus_\infty H_2)\cap A\sub{F_1}=H_1\cap A\sub{F_1}$, both of which have the unique element $c\sub{F_1}$. For (b), note that $\dim A\sub{F_1\oplus_1\varnothing}=\dim A\sub{F_1}+n_2=(n_1-1-\dim F_1)+n_2=n-1-\dim F_1\oplus_1\varnothing$ and $\dim A\sub{F_1\oplus H_2}=\dim A\sub{F_1}=n_1-1-\dim F_1=n-1-\dim (F_1\oplus H_2)$. For (c), we have $\brA{x_1+z_2,y_1}=\brA{x_1,y_1}=1$ for every $x_1+z_2\in A\sub{F_1\oplus_1\varnothing}=A\sub{F_1}+\R^{n_2}$ and $y_1\in A\sub{F_1^*\oplus H_2^\circ}=A\sub{F_1^*}$.
Now, it remains to consider the cases: $F=F_1\oplus_1F_2$ or $F_1\oplus F_2$ where each $F_j$ is a face of dimension $k_j$ of $H_j$.

For the condition (a), we can see from Lemma \ref{centroid_formula} that the centroid of any face $F$ belongs to $A\sub{F}$. So, we need to show the uniqueness of the common point of $A\sub{F}$ and $H$. For the $\ell_1$-sum case, let $\frac{k_1+1}{k_1+k_2+2}\,p + \frac{k_2+1}{k_1+k_2+2}\,q$ for $p\in A\sub{F_1}$, $q\in A\sub{F_2}$ be a common point of $H$ and $A\sub{F}$. Then, $$\frac{1}{\la}\cdot\frac{k_1+1}{k_1+k_2+2}\,\,p\in H_1\quad\text{and}\quad\frac{1}{1-\la}\cdot\frac{k_2+1}{k_1+k_2+2}\,\,q\in H_2$$ for some $\la\in(0,1)$ because every point in $H$ is a convex combination of two points in $H_1$, $H_2$. Then $\frac{1}{\la}\frac{k_1+1}{k_1+k_2+2}=1=\frac{1}{1-\la}\frac{k_2+1}{k_1+k_2+2}$. Otherwise, either $\frac{1}{\la}\frac{k_1+1}{k_1+k_2+2}>1$ or $\frac{1}{1-\la}\frac{k_2+1}{k_1+k_2+2}>1$. If $r:=\frac{1}{\la}\frac{k_1+1}{k_1+k_2+2}>1$, then $rA\sub{F_1}\cap H_1$ must be empty because $A\sub{F_1}$ intersects $H$ at a single point by the induction hypothesis. However, it is impossible because $rp\in rA\sub{F_1}\cap H_1$. Thus, $r=1$ gives $p\in H_1\cap A\sub{F_1}$ and $q\in H_1\cap A\sub{F_2}$. By the induction hypothesis, we get $p=c\sub{F_1}$ and $q=c\sub{F_2}$, which give the uniqueness of the common point of $A\sub{F}$ and $H$. 
Now, for the $\ell_\infty$-sum case, suppose that $p+q+t\big(\frac1{n_1-k_1}c\sub{F_1}-\frac1{n_2-k_2}c\sub{F_2}\big)$ for $p\in A\sub{F_1}$, $q\in A\sub{F_2}$ is a common point of $H\cap A\sub{F}$. Then $p+\frac{t}{n_1-k_1}c\sub{F_1}\in H_1$ and $q-\frac{t}{n_2-k_2}c\sub{F_2}\in H_2$ because $H=H_1\oplus_\infty H_2$. It implies that $\big(1+\frac{t}{n_1-k_1}\big)A\sub{F_1}$ intersects $H_1$, and $\big(1-\frac{t}{n_2-k_2}\big)A\sub{F_2}$ intersects $H_2$. But one of them is impossible unless $t=0$ by the same reason as in the $\ell_1$-sum case. So, $t=0$ gives $p\in H_1\cap A\sub{F_1}$, $q\in H_2\cap A\sub{F_2}$, i.e., $p=c\sub{F_1}$, $q=c\sub{F_2}$.

To verify the condition (b), we use \eqref{def_AF1}, \eqref{def_AF2}, and Lemma \ref{lem:dim_sum}. Then
\begin{align*}
\dim A\sub{F} &=
\begin{cases}
\dim A\sub{F_1} + \dim A\sub{F_2}=(n_1\!-\!1\!-\!k_1) + (n_2\!-\!1\!-\!k_2), &\text{if }F=F_1\oplus_1 F_2 \\
\dim A\sub{F_1} + \dim A\sub{F_2} +1=(n_1\!-\!1\!-\!k_1) + (n_2\!-\!1\!-\!k_2) +1, &\text{if }F=F_1\oplus F_2
\end{cases}\\
&=n-1-\dim F
\end{align*}
To verify the condition (c), it suffices to show $\brA{x,y}=1$ for $x\in A\sub{F_1\oplus_1F_2}$, $y\in A\sub{F_1^*\oplus F_2^*}$. Write 
\begin{align*}
x&=\frac{k_1+1}{k_1+k_2+2}\,p + \frac{k_2+1}{k_1+k_2+2}\,q\,\,\,\in A\sub{F_1\oplus_1F_2},\\
y&=p^*+q^*+\frac1{k_1+1}\,c\sub{F_1^*}-\frac1{k_2+1}\,c\sub{F_2^*}\,\,\in A\sub{F_1^*\oplus F_2^*},
\end{align*}
where $p\in A\sub{F_1}$, $q\in A\sub{F_2}$ and $p^*\in A\sub{F_1^*}$, $q^*\in A\sub{F_2^*}$. Thus,
\begin{align*}
\brA{x,y} &=\brA{ \frac{k_1+1}{k_1+k_2+2}p + \frac{k_2+1}{k_1+k_2+2}q, p^*+q^*+\frac1{k_1+1}c\sub{F_1^*}-\frac1{k_2+1}c\sub{F_2^*}}\\
&=\frac{k_1+1}{k_1+k_2+2}\brA{p,p^*}+\frac{k_2+1}{k_1+k_2+2}\brA{q,q^*} + \frac1{k_1+k_2+2}\Big(\langle p,c\sub{F_1^*}\rangle - \langle q,c\sub{F_2^*}\rangle\Big) \\
&=\frac{k_1+1}{k_1+k_2+2}+\frac{k_2+1}{k_1+k_2+2}+\frac1{k_1+k_2+2}(1-1)=1
\end{align*}
because  $\brA{p,p^*}=\brA{q,q^*}=1$ by the induction hypothesis.
\end{proof}

To compute the volume of the simplex $C_\F$ or $X_\F$ defined in \eqref{def_simplex} from $C=(c\sub{F})$, $X=(x\sub{F})$, and a flag $\F$, we need the following lemma.
\begin{lemma}\label{lem:gauss_elimination}
Let $\sigma=(j_1,\ldots,j_n)$ be an $n$-tuple consisting of $n_1$ copies of $1$ and $n_2$ copies of $2$ for $n=n_1+n_2$. For $j=1,2$, denote by $\sigma_j(k)$ the number of $j$'s among the first $k$ entries $j_1,\ldots,j_k$ of $\sigma$. Consider the $n\times n$ matrix $M$ whose rows consist of $$p_{\sigma_1(k)}+q_{\sigma_2(k)}+\xi_kz \quad\text{for }k=1,\ldots,n$$ where $\xi_1,\ldots,\xi_n\in\R$, $p_0=q_0=0,p_1,\ldots,p_{n_1},q_1,\ldots,q_{n_2},z\in\R^n$. Then the absolute value of the determinant of $M$ is the same as that of the matrix $M'$ whose rows consist of
\begin{align*}
p_{k_1}+\phi_1^\sigma(k_1)z &\quad\text{for }k_1=1,\ldots,n_1, \\
q_{k_2}+\phi_2^\sigma(k_2)z &\quad\text{for }k_2=1,\ldots,n_2,
\end{align*}
where $\phi_1^\sigma$, $\phi_2^\sigma$ are functions determined by $\xi_1,\ldots,\xi_n\in\R$ and $\sigma$.
More precisely, for $j=1,2$, the function $\phi_j$ is defined by $\phi_j(0)=0$ and
\begin{equation}\label{eq:def_phi_j}
\phi_j^\sigma(\ell)=(\Phi_j\circ\sigma_j^{-1})(\ell),\quad\text{for }\ell=1,\ldots,n_j,
\end{equation}
where
\begin{align*}
\sigma_j^{\!-\!1}(\ell)&=\min\Set{k:\sigma_j(k)=\ell\,}\quad\text{for }\ell=1,\ldots,n_j,\\
\Phi_j(k)\,&=\xi_k+\!\!\!\!\!\sum_{\ell<k\,:\,j_\ell\neq j_{\ell\!+\!1}}\!\!\!\!\!(-\!1)^{j+j_\ell}\,\xi_\ell \quad\text{for }k=1,\ldots,n.
\end{align*}
\end{lemma}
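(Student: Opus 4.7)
The plan is to use elementary row operations that preserve $|\det M|$ in order to decouple the $p$'s from the $q$'s and reduce $M$ to the block form described for $M'$. I will perform three stages: (i) consecutive-row differences that reduce every row to a single $p$-difference or a single $q$-difference plus a multiple of $z$; (ii) a permutation of rows to group the $p$-rows at the top and the $q$-rows at the bottom; (iii) a cumulative summation within each block to telescope those differences into genuine $p_\ell$'s and $q_\ell$'s.

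Concretely, first I set $r_0 = 0$ (consistent with $p_0 = q_0 = 0$ and the harmless convention $\xi_0 = 0$) and replace each row $r_k = p_{\sigma_1(k)} + q_{\sigma_2(k)} + \xi_k z$ by $r_k - r_{k-1}$, for $k = n, n-1, \ldots, 1$ in that order; this is a sequence of determinant-preserving operations. Since exactly one of $\sigma_1, \sigma_2$ increments from $k-1$ to $k$, namely the one indexed by $j_k$, each new row takes the form $p_{\sigma_1(k)} - p_{\sigma_1(k)-1} + (\xi_k - \xi_{k-1}) z$ if $j_k = 1$, and $q_{\sigma_2(k)} - q_{\sigma_2(k)-1} + (\xi_k - \xi_{k-1}) z$ if $j_k = 2$. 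Next I permute the rows so that the $n_1$ rows with $j_k = 1$ (in order of increasing $\sigma_1(k)$) come first, followed by the $n_2$ rows with $j_k = 2$ (in order of increasing $\sigma_2(k)$); this permutation may flip the sign but obviously preserves $|\det|$.

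Inside the top $p$-block I then replace row $\ell$ by the sum of the first $\ell$ rows of that block, for $\ell = n_1, n_1-1, \ldots, 1$; the $p$-differences telescope and row $\ell$ becomes $p_\ell + \bigl(\sum_{m=1}^{\ell} (\xi_{\sigma_1^{-1}(m)} - \xi_{\sigma_1^{-1}(m)-1})\bigr) z$. An identical reduction is applied to the bottom $q$-block. This brings $M$ to $M'$ up to sign, provided the coefficients of $z$ are identified with $\phi_1^\sigma(\ell)$ and $\phi_2^\sigma(\ell)$ respectively.

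The only real point requiring care, and the main obstacle in what is otherwise a routine calculation, is this last identification with the formulas in \eqref{eq:def_phi_j}. For the $p$-case, setting $K = \sigma_1^{-1}(\ell)$, an Abel-style summation (using $\xi_0 = 0$ and $[j_K = 1] = 1$) rewrites the coefficient of $z$ as $\xi_K + \sum_{k=1}^{K-1}\bigl([j_k = 1] - [j_{k+1} = 1]\bigr)\xi_k$. The indicator difference vanishes unless $j_k \neq j_{k+1}$, and at a transition it equals $+1$ when $j_k = 1$ and $-1$ when $j_k = 2$, so it is precisely $(-1)^{1+j_k}$; this matches $\Phi_1(K) = \phi_1^\sigma(\ell)$. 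The symmetric computation for the $q$-block produces $(-1)^{2+j_k}$ at transitions and yields $\phi_2^\sigma(\ell)$, completing the reduction.
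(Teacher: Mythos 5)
Your proof is correct, and it takes a genuinely different route from the one in the paper. The paper starts from $M'$ and works toward $M$: for each $k$ it adds the complementary row of $M'$ (the $q$-row $\sigma_2(k)$ to the $p$-row $\sigma_1(k)$ when $j_k=1$, and vice versa when $j_k=2$), and then the whole argument rests on verifying the single identity $\phi_1^\sigma(\sigma_1(k))+\phi_2^\sigma(\sigma_2(k))=\xi_k$ by a direct case analysis on $j_k$ and the transition sets $I^\sigma_{12}$, $I^\sigma_{21}$. You instead work forward from $M$ by an explicit three-stage reduction (consecutive differences to decouple $p$ from $q$, a permutation, cumulative sums to telescope), and then recover the formulas for $\phi_1^\sigma$, $\phi_2^\sigma$ via Abel summation; in your version the $\phi$'s emerge from the computation rather than being posited and checked. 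Both arguments are valid Gauss-elimination proofs. One thing worth making explicit in your write-up (and one that the paper also leaves implicit) is that the row operations must be applied in decreasing order of the index so that the row being subtracted or added has not yet been overwritten; you do state the decreasing order in stages (i) and (iii), which is exactly what makes the operations legitimate. Your identification of the indicator difference $[j_k=1]-[j_{k+1}=1]$ with $(-1)^{1+j_k}$ on transitions, and the analogous one for the $q$-block, is the right bookkeeping and matches $\Phi_1$, $\Phi_2$ exactly.
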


\begin{proof}
It is enough to show that the matrix $M$ can be obtained from $M'$ through the Gauss elimination. For each $k=1,\ldots,n$, replace one of the rows 
\begin{align*}
p_{\sigma_1(k)}+\phi_1^\sigma(\sigma_1(k))z,\quad &(\text{if }j_k=1) \\
q_{\sigma_2(k)}+\phi_2^\sigma(\sigma_2(k))z,\quad &(\text{if }j_k=2)
\end{align*}
by the row 
\begin{equation*}
p_{\sigma_1(k)}+q_{\sigma_2(k)}+\brS{\phi_1^\sigma(\sigma_1(k))+\phi_2^\sigma(\sigma_2(k))}z.
\end{equation*}
We claim that $\phi_1^\sigma(\sigma_1(k))+\phi_2^\sigma(\sigma_2(k))=\xi_k$ for each $k=1,\ldots,n$; if it is true, then every row of $M$ can be obtained by the above replacement of rows in $M'$. First, consider the case that either $\sigma_1(k)$ or $\sigma_2(k)$ is zero; assume $\sigma_2(k)=0$. This case implies $j_1=\cdots=j_k=1$ and $\sigma_1^{-\!1}(k)=k$. So, $$\phi_1^\sigma(\sigma_1(k))+\phi_2^\sigma(\sigma_2(k))=\phi_1^\sigma(k)+\phi_2^\sigma(0)=\Phi_1(k)=\xi_k.$$
Assume that both $\sigma_1(k)$ and $\sigma_2(k)$ are positive integers. Then, for $j=1,2$,
\begin{align*}
\phi_j^\sigma(\sigma_j(k))&=\Phi_j(\kappa\!_j) =\xi_{\kappa\!_j}+\!\!\!\!\!\sum_{\ell<\kappa\!_j\,:\,j_\ell\neq j_{\ell\!+\!1}}\!\!\!\!\!(-\!1)^{j+j_\ell}\,\xi_\ell, \quad\text{for }\kappa\!_j=\sigma_j^{\!-\!1}(k) \\
&= \xi_{\kappa\!_j} +(-1)^j\Big(\!\sum_{I_{\!12}^\sigma\cap[1,\kappa\!_j)}\xi_\ell - \sum_{I_{\!21}^\sigma\cap[1,\kappa\!_j)}\xi_\ell\Big),
\end{align*}
where $I_{\!12}^\sigma=\set{\ell:\, j_\ell=1,\, j_{\ell+1}=2}$ and $I_{\!21}^\sigma=\set{\ell:\, j_\ell=2,\, j_{\ell+1}=1}$.
If $j_k=1$, then $\kappa_1=\sigma_1^{-\!1}(k)=k$ and $j_{\kappa_2}=2$, $j_{\kappa_2+1}=\cdots=j_k=1$ for $\kappa_2=\sigma_2^{-\!1}(k)$. Thus, $I_{\!12}^\sigma\cap[1,\kappa_2)=I_{\!12}^\sigma\cap[1,k)$ and $\set{\kappa_2}\cup I_{\!21}^\sigma\cap[1,\kappa_2)=I_{\!21}^\sigma\cap[1,k)$. In case of $j_k=1$, we have
\begin{align}
\phi_1^\sigma(\sigma_1(k)) &=\xi_k +\sum_{I_{\!12}^\sigma\cap[1,k)}\xi_\ell - \sum_{I_{\!21}^\sigma\cap[1,k)}\xi_\ell, \label{eq:phi_sigma_jk1_1}\\
\phi_2^\sigma(\sigma_2(k)) &=\sum_{I_{\!21}^\sigma\cap[1,k)}\xi_\ell - \sum_{I_{\!12}^\sigma\cap[1,k)}\xi_\ell.  \label{eq:phi_sigma_jk1_2}
\end{align}
In case of $j_k=2$, we get $\kappa_2=\sigma_2^{-\!1}(k)=k$ and $j_{\kappa_1}=1$, $j_{\kappa_1+1}=\cdots=j_k=2$ for $\kappa_1=\sigma_1^{-\!1}(k)$. Thus, 
\begin{align}
\phi_1^\sigma(\sigma_1(k)) &=\sum_{I_{\!12}^\sigma\cap[1,k)}\xi_\ell - \sum_{I_{\!21}^\sigma\cap[1,k)}\xi_\ell, \label{eq:phi_sigma_jk2_1}\\
\phi_2^\sigma(\sigma_2(k)) &=\xi_k +\sum_{I_{\!21}^\sigma\cap[1,k)}\xi_\ell - \sum_{I_{\!12}^\sigma\cap[1,k)}\xi_\ell.  \label{eq:phi_sigma_jk2_2}
\end{align}
In both cases, we have $\phi_1^\sigma(\sigma_1(k))+\phi_2^\sigma(\sigma_2(k))=\xi_k$, which completes the proof.
\end{proof}

\begin{proposition}\label{prop:equal_volumes}
Let $H$ be a Hanner polytope in $\R^n$. Consider the set $C=(c\sub{F})$ of the centroids of faces of $H$. Then, the simplex $C_\F$, defined in \eqref{def_simplex}, has the same volume for every flag $\F$ of $H$.
\end{proposition}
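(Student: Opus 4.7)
The plan is to induct on $n=\dim H$. The base case $n=1$ is trivial: a $1$-dimensional Hanner polytope has exactly two flags (each consisting of one vertex), and by symmetry the two corresponding $1$-simplices $C_\F$ have equal volume. For the inductive step, write $H=H_1\oplus_p H_2$ with $p\in\{1,\infty\}$ and $H_j\subset\R^{n_j}$, and use the description from Remark \ref{rmk:flag_form}: every flag $\F$ over $H$ is uniquely encoded by a type $\sigma=(j_1,\ldots,j_n)\in J$ together with lower-dimensional flags $\F_1$ over $H_1$ and $\F_2$ over $H_2$. Let $p_\ell$ denote the centroid of the $(\ell-1)$-dimensional face of $H_1$ in $\F_1$ (so $p_\ell=c\sub{F_1^{\ell-1}}$) and similarly $q_\ell$ for $H_2$, with the conventions $p_0=q_0=0$ in the $\ell_1$-case, and $p_0=c\sub{H_1}=0$, $q_0=c\sub{H_2}=0$ in the $\ell_\infty$-case (using that Hanner polytopes are symmetric).

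Next I would compute the centroids of the faces in $\F$ via Lemma \ref{centroid_formula}. In the $\ell_1$-case one gets, for each $k=1,\dots,n$,
\[
c\sub{F^{k-1}}\;=\;\frac{\sigma_1(k)}{k}\,p\sub{\sigma_1(k)}+\frac{\sigma_2(k)}{k}\,q\sub{\sigma_2(k)}\;=\;\frac1k\brR{u\sub{\sigma_1(k)}+v\sub{\sigma_2(k)}},
\]
where $u_\ell=\ell p_\ell$ and $v_\ell=\ell q_\ell$. In the $\ell_\infty$-case, relabeling $p_\beta=c\sub{F_1^{n_1-\beta}}$, $q_\beta=c\sub{F_2^{n_2-\beta}}$, one gets $c\sub{F^{n-k}}=p\sub{\sigma_1(k)}+q\sub{\sigma_2(k)}$. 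In both cases, after factoring out the row-scalars $1/k$ (which together give $1/n!$) and/or reordering rows, the matrix of centroids takes the form appearing in Lemma \ref{lem:gauss_elimination} with $\xi_1=\cdots=\xi_n=0$. The lemma then collapses the determinant, independently of $\sigma$, into $\lvert\det(u_1,\ldots,u_{n_1},v_1,\ldots,v_{n_2})\rvert$ (resp.\ $\lvert\det(p_1,\ldots,p_{n_1},q_1,\ldots,q_{n_2})\rvert$), which by the block structure $\R^n=\R^{n_1}\oplus\R^{n_2}$ factors as a product of an $n_1\times n_1$ and an $n_2\times n_2$ determinant.

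Putting the constants together, I expect an identity of the shape
\[
|C_\F|\;=\;\gamma(n_1,n_2)\,|C\sub{1,\F_1}|\,|C\sub{2,\F_2}|,
\]
where $\gamma(n_1,n_2)$ equals $\binom{n}{n_1}^{-2}$ in the $\ell_1$-case and $\binom{n}{n_1}^{-1}$ in the $\ell_\infty$-case, and $|C\sub{j,\F_j}|$ is the corresponding simplex volume for the flag $\F_j$ over $H_j$. Crucially, the right-hand side does not depend on the type $\sigma$, and by the inductive hypothesis applied to $H_1$ and $H_2$ it does not depend on $\F_1,\F_2$ either. Hence $|C_\F|$ is the same for every flag $\F$ over $H$, completing the induction.

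The main obstacle is purely bookkeeping: matching the $\ell_1$- and $\ell_\infty$-centroid formulas with the exact form required by Lemma \ref{lem:gauss_elimination} (vanishing $\xi_k$, correct placement of the zero-dimensional conventions $p_0=q_0=0$, and, in the $\ell_\infty$-case, the reduction of a face $F^j=F_1\oplus H_2$ to the single summand $c\sub{F_1}$ which is enabled by the symmetry $c\sub{H_j}=0$). Once those identifications are made, the cancellation of the $\sigma$-dependence is immediate from the lemma.
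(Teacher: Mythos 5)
Your proposal is correct and follows essentially the same route as the paper's proof: induction on dimension, encoding each flag by a type $\sigma$ together with two lower-dimensional flags, computing the centroids via Lemma \ref{centroid_formula}, and applying Lemma \ref{lem:gauss_elimination} with $\xi_1=\cdots=\xi_n=0$ to obtain the factorization $|C_\F|=\gamma(n_1,n_2)\,|(C_1)_{\F_1}|\,|(C_2)_{\F_2}|$ with exactly the constants you predict. The only detail worth making explicit is the preliminary reduction (via a linear map, which rescales every $|C_\F|$ by the same factor $|\det T|$) to the case where $\R^{n_1}$ and $\R^{n_2}$ are orthogonal, so that the determinant genuinely splits into an $n_1\times n_1$ and an $n_2\times n_2$ block.
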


\hide{ 

\begin{proof}
Let $H$ be the $\ell_1$ or $\ell_\infty$-sum in $\R^n=\R^{n_1}\oplus\R^{n_2}$ of two Hanner polytopes $H_1$ in $\R^{n_1}$ and $H_2$ in $\R^{n_2}$. Let $\F=\set{F^0,\ldots,F^{n-1}}$ be any flag over $H$, and consider the lower dimensional flags $\F_1=\{F_1^0,\ldots,F_1^{n_1-1}\}$, $\F_2=\{F_2^0,\ldots,F_2^{n_2-1}\}$ defined in \eqref{lower_flags}.\\

\noindent{\bf CASE ($H=H_1\oplus_1H_2$)}.\quad  First, consider the $\ell_1$-sum case. It follows from Remark \ref{rmk:face_form} that each element of $\F$ can be expressed by
\begin{equation*}
F^{k-1}=F_1^{k_1-1}\oplus_1 F_2^{k_2-1} \quad\text{for }k=1,\ldots,n
\end{equation*}
where $k_1+k_2=k$, $0\le k_j\le n_j$, and $F_1^{-1}=F_2^{-1}=\varnothing$. By Lemma \ref{centroid_formula}, the centroid of $F^{k-1}=F_1^{k_1-1} \oplus_1 F_2^{k_2-1}$ is given by
\begin{align*}
c\sub{F^{k-1}}&=\frac{k_1}{k_1+k_2}c\sub{F_1^{k_1-1}} +\frac{k_2}{k_1+k_2}c\sub{F_2^{k_2-1}} \\
&=\frac1k\,(k_1\,c\sub{F_1^{k_1-1}} +k_2\,c\sub{F_2^{k_2-1}}).
\end{align*}
Thus, the volume of the simplex $C_\F$ is equal to
\begin{align*}
|C_\F| &= \frac{1}{n!}\abs{\det\brR{\begin{array}{c}
c\sub{F^0}\\ \vdots\\
c\sub{F^{k-1}}\\ \vdots\\
c\sub{F^{n-1}}
\end{array}}}
=\frac{1}{n!}\abs{\det\brR{\begin{array}{c}
\vdots\\ \frac1k\,(k_1\,c\sub{F_1^{k_1-1}} +k_2\,c\sub{F_2^{k_2-1}})\\ \vdots\\
\end{array}}} \\
&=\frac{1}{n!^2}\abs{\det\brR{\begin{array}{c}
\vdots\\ k_1\,c\sub{F_1^{k_1-1}} + k_2\,c\sub{F_2^{k_2-1}}\\ \vdots\\
\end{array}}}.
\end{align*}
Since the zero-dimensional face contained in $\F$ is written by $F^0=F_1^0\oplus_1\varnothing$ or $F^0=\varnothing\oplus_1 F_2^0$, the first row of the above matrix is either $c\sub{F_1^0}$ or $c\sub{F_2^0}$. In general, the $k$-dimensional face contained in $\F$ is obtained from the $(k-1)$-dimensional face by increasing dimension in either $\F_1$ or $\F_2$ and by keeping the same in the other flag. In other words, if the $(k-1)$-dimensional face $F^{k-1}$ is written by $F_1^{k_1-1}\oplus_1 F_2^{k_2-1}$, then the $k$-dimensional face $F^k$ is given by either $F_1^{k_1}\oplus_1 F_2^{k_2-1}$ or $F_1^{k_1-1}\oplus_1 F_2^{k_2}$. Therefore, the sum of two centroids in each row of the above matrix can be sequentially reduced from the top of rows to one term through the Gauss elimination. So,
\begin{align*}
\det\brR{\begin{array}{c}
\vdots\\ k_1\,c\sub{F_1^{k_1-1}}\\ k_2\,c\sub{F_2^{k_2-1}}\\ \vdots\\
\end{array}}
&=\det\brR{\begin{array}{c|c}
1\cdot c\sub{F_1^0} & 0\\ \vdots & \vdots\\
n_1\,c\sub{F_1^{n_1-1}} & 0\\ \hline
0 & 1\cdot c\sub{F_2^0}\\ \vdots & \vdots\\
0 & n_2\,c\sub{F_2^{n_2-1}}
\end{array}}\\
&=(n_1!\,n_2!)\det\brR{\begin{array}{c}
c\sub{F_1^0}\\ \vdots\\
c\sub{F_1^{n_1-1}}
\end{array}}\det\brR{\begin{array}{c}
c\sub{F_2^0}\\ \vdots\\
c\sub{F_2^{n_2-1}}
\end{array}}.
\end{align*}
Therefore, for the $\ell_1$-sum case, we have the following volume formula:
\begin{equation}\label{ell1_vol_formula}
\abs{C_\F}=\brR{\frac{n_1!n_2!}{n!}}^2\Abs{(C_1)_{\F_1}}\cdot\Abs{(C_2)_{\F_2}}
\end{equation}
where each $C_j=(c\sub{F_j})$ is the set of centroids of faces of a Hanner polytope $H_j$ in lower dimension for $j=1,2$.\\

\noindent{\bf CASE ($H=H_1\oplus_\infty H_2$)}.\quad Now consider the $\ell_\infty$-sum case. It follows from Remark \ref{rmk:face_form}  that each element of $\F$ can be expressed by
\begin{equation*}
F^{n-k}=F_1^{n_1-k_1}\oplus F_2^{n_2-k_2}\quad\text{for }k=1,\ldots,n
\end{equation*}
where $k_1+k_2=k$, $0\le k_j\le n_j$ and $F_j^{n_j}=H_j$ for $j=1,2$. By Lemma \ref{centroid_formula}, we have
\begin{align*}
|C_\F| &= \frac{1}{n!}\abs{\det\brR{\begin{array}{c}
c\sub{F^0}\\ \vdots\\
c\sub{F^{n-k}}\\ \vdots\\
c\sub{F^{n-1}}
\end{array}}}
=\frac{1}{n!}\abs{\det\brR{\begin{array}{c}
\vdots\\ c\sub{F_1^{n_1-k_1}} + c\sub{F_2^{n_2-k_2}}\\ \vdots\\
\end{array}}}.
\end{align*}
The last row of the above matrix is either $c\sub{F_1^{n_1-1}}$ or $c\sub{F_2^{n_2-1}}$ because the $(n-1)$-dimensional face $F^{n-1}$ contained in $\F$ is given by $F^{n-1}=F_1^{n_1-1}\oplus H_2$ or $H_1\oplus F_2^{n_2-1}$. The sum of two centroids in each row of the above matrix can be sequentially reduced from the bottom of rows to one term through the Gauss elimination. Thus, for the $\ell_\infty$-sum case, we have
\begin{align}\label{ell_inf_vol_formula}
\abs{C_\F}&=\frac1{n!}\abs{\det\brR{\begin{array}{c}
\vdots\\ c\sub{F_1^{n_1-k_1}} \\ c\sub{F_2^{n_2-k_2}}\\ \vdots\\
\end{array}}}
=\frac1{n!}\abs{\det\brR{\begin{array}{c}
c\sub{F_1^0}\\ \vdots\\
c\sub{F_1^{n_1-1}}
\end{array}}\det\brR{\begin{array}{c}
c\sub{F_2^0}\\ \vdots\\
c\sub{F_2^{n_2-1}}
\end{array}}}\notag \\
&=\frac{n_1!n_2!}{n!}\Abs{(C_1)_{\F_1}}\cdot\Abs{(C_2)_{\F_2}},
\end{align}
where each $C_j=(c\sub{F_j})$ is the set of centroids of faces of $H_j$ for $j=1,2$.

Finally, we can use induction on dimension, to conclude from \eqref{ell1_vol_formula}, \eqref{ell_inf_vol_formula} that the simplices $C_\F$ have equal volumes for all flags $\F$ over a Hanner polytope $H$. 

\end{proof}
} 

\begin{proof}
Let $H$ be the $\ell_1$ or $\ell_\infty$-sum of two Hanner polytopes $H_1\subset\R^{n_1}$ and $H_2\subset\R^{n_2}$. Since the volume of the linear image of $C_\F$ by $T\in{\rm GL}(n)$ is always equal to $\abs{\det T}\abs{C_\F}$ for each flag $\F$, we may assume that $\R^n=\R^{n_1}\oplus\R^{n_2}$ is the orthogonal sum of $\R^{n_1}$ and $\R^{n_2}$.  

Let $\F=\set{F^0,\ldots,F^{n-1}}$ be a flag over $H$. Consider the lower dimensional flags, defined in \eqref{lower_flags}, $$\F_1=\set{F_1^0,\ldots,F_1^{n_1-1}}\quad\text{and}\quad\F_2=\set{F_2^0,\ldots,F_2^{n_2-1}}.$$
Let $\sigma=(j_1,\ldots,j_n)\in J$ be the type of $\F$ (Definition \ref{def:type}). For $j=1,2$, denote by $\sigma_j(k)$ the number of $j$'s among the first $k$ entries $j_1,\ldots,j_k$ of $\sigma$, and let $F_j^{-1}=\varnothing$, $F_j^{n_j}=H_j$ be the improper faces of $H_j$. \\

\noindent{\bf CASE ($H=H_1\oplus_1 H_2$)}.\quad It follows from Remark \ref{rmk:flag_form} that each element of a flag $\F=\set{F^0,\ldots,F^{n-1}}$ of type $\sigma$ is expressed by
 $$F^{k-1}=F_1^{\sigma_1(k)-1}\oplus_1 F_2^{\sigma_2(k)-1},\quad\text{for }1\le k\le n.$$
By Lemma \ref{centroid_formula}, the centroid of $F^{k-1}$ is given by
\begin{align*}
c\sub{F^{k-1}}&=\frac{\sigma_1(k)}{\sigma_1(k)+\sigma_2(k)}\,\,c_{\!F_1^{\sigma_1\!(k)\!-\!1}} +\frac{\sigma_2(k)}{\sigma_1(k)+\sigma_2(k)}\,\,c_{\!F_2^{\sigma_2\!(k)\!-\!1}}\\
&=\frac1k\brR{\sigma_1(k)\,c_{\!F_1^{\sigma_1\!(k)\!-\!1}}+\sigma_2(k)\,c_{\!F_2^{\sigma_2\!(k)\!-\!1}}}.
\end{align*}
Thus, the volume of the simplex $C_\F$ is equal to the absolute value of
\begin{align*}
\frac{1}{n!}\det\brR{\begin{array}{c}
c\sub{F^0}\\ \vdots\\ 
c\sub{F^{k-1}}\\ \vdots\\
c\sub{F^{n-1}}\\ 
\end{array}}
&=\frac{1}{n!^2}\det\brR{\begin{array}{c}
\vdots\\ \sigma_1(k)\,c_{\!F_1^{\sigma_1\!(k)\!-\!1}}+\sigma_2(k)\,c_{\!F_2^{\sigma_2\!(k)\!-\!1}}\\ \vdots\\
\end{array}}.
\end{align*}
Applying Lemma \ref{lem:gauss_elimination} with $\xi_1=\cdots=\xi_n=0$, the absolute value of the determinant of the matrix whose rows are $$\sigma_1(k)\,c_{\!F_1^{\sigma_1(k)-1}}+\sigma_2(k)\,c_{\!F_2^{\sigma_2(k)-1}},\quad k=1,\ldots,n$$ is equal to that of the matrix whose rows are
\begin{align*}
k_1\,c_{\!F_1^{k_1\!-\!1}}, \qquad &k_1=1,\ldots,n_1\\ 
k_2\,c_{\!F_2^{k_2\!-\!1}}, \qquad &k_2=1,\ldots,n_2.
\end{align*}
The determinant of the above matrix is, up to a multiple of $\pm1$, equal to
\begin{align*}
\det\brR{\begin{array}{c|c}
1\cdot c\sub{F_1^0} & 0\\ \vdots & \vdots\\
n_1\cdot c_{\!F_1^{n_1\!-\!1}} & 0\\ \hline
0 & 1\cdot c\sub{F_2^0}\\ \vdots & \vdots\\
0 & n_2\cdot c_{\!F_2^{n_2\!-\!1}}
\end{array}}
=(n_1!\,n_2!)\det\brR{\begin{array}{c}
c\sub{F_1^0}\\ \vdots\\
c_{\!F_1^{n_1\!-\!1}}
\end{array}}\det\brR{\begin{array}{c}
c\sub{F_2^0}\\ \vdots\\
c_{\!F_2^{n_2\!-\!1}}
\end{array}},
\end{align*}
Thus, for the $\ell_1$-sum case, we have
\begin{equation}\label{ell1_vol_formula}
\abs{C_\F}=\brR{\frac{n_1!n_2!}{n!}}^2\Abs{(C_1)_{\F_1}}\cdot\Abs{(C_2)_{\F_2}}
\end{equation}
where each $C_j=(c\sub{F_j})$ is the set of centroids of faces of $H_j$ for $j=1,2$.\\

\noindent{\bf CASE ($H=H_1\oplus_\infty H_2$)}.\quad From Remark \ref{rmk:flag_form}, each element of the flag $\F=\set{F^0,\ldots,F^{n-1}}$ of type $\sigma$ can be expressed by $$F^{n-k}=F_1^{n_1-\sigma_1(k)}\oplus F_2^{n_2-\sigma_2(k)}.$$ 
By Lemma \ref{centroid_formula}, the volume of the simplex $C_\F$ is equal to the absolute value of
\begin{align*}
\frac1{n!}\det\brR{\begin{array}{c}
c\sub{F^0}\\ \vdots\\ 
c\sub{F^{n-k}}\\ \vdots\\
c\sub{F^{n-1}}\\ 
\end{array}}
&=\frac1{n!}\det\brR{\begin{array}{c}
\vdots\\ c_{\!F_1^{n_1-\sigma_1(k)}}+c_{\!F_2^{n_2-\sigma_2(k)}}\\ \vdots\\
\end{array}}.
\end{align*}
Applying Lemma \ref{lem:gauss_elimination} with $\xi_1=\cdots=\xi_n=0$, the absolute value of  the determinant of the matrix whose rows are $$c_{\!F_1^{n_1-\sigma_1(k)}}+c_{\!F_2^{n_2-\sigma_2(k)}},\quad k=1,\ldots,n$$  is equal to that of the matrix whose rows are
\begin{align*}
c_{\!F_1^{n_1-k_1}}, \qquad &k_1=1,\ldots,n_1\\ 
c_{\!F_2^{n_2-k_2}}, \qquad &k_2=1,\ldots,n_2.
\end{align*}
Then, by a similar argument to the $\ell_1$-sum case, the volume of $C_\F$ for the $\ell_\infty$-sum case is given by
\begin{align}\label{ell_inf_vol_formula}
\abs{C_\F}
&=\frac{n_1!n_2!}{n!}\Abs{(C_1)_{\F_1}}\cdot\Abs{(C_2)_{\F_2}},
\end{align}
where each $C_j=(c\sub{F_j})$ is the set of centroids of faces of $H_j$ for $j=1,2$.

Finally, we use the induction on the dimension of $H$ to conclude from \eqref{ell1_vol_formula}, \eqref{ell_inf_vol_formula} that the simplices $C_\F$ have equal volumes for each flag $\F$ over $H$. 

\end{proof}

\begin{corollary}\label{cor:differentiable}
Let $H$ be a Hanner polytope in $\R^n$ and $C=(c\sub{F})$ the set of centroids of faces of $H$. Then, the volume function $V(\cdot)$ for $H$ given in Definition \ref{def:vol_function} is infinitely differentiable around the point $C=(c\sub{F})$.
\end{corollary}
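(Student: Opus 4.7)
The plan is to show that near $C$ the function $V$ is in fact a polynomial, by arguing that the absolute values appearing in its definition can be stripped off with a fixed sign on a neighborhood of $C$. Recall that for any flag $\F=\{F^0,\ldots,F^{n-1}\}$ and any tuple $Z=(z\sub{F})\sub{F\in\FF}$,
$$|Z_\F|=\frac{1}{n!}\bigl|D_\F(Z)\bigr|,\qquad D_\F(Z):=\det\bigl(z\sub{F^0},\ldots,z\sub{F^{n-1}}\bigr),$$
where $D_\F$ is a polynomial in the coordinates of the vectors $z\sub{F^0},\ldots,z\sub{F^{n-1}}$. Thus $V(Z)=\frac{1}{n!}\sum_\F|D_\F(Z)|$ is a finite sum of absolute values of polynomials, and it is clearly $C^\infty$ on the complement of the zero set of any individual $D_\F$.

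The only substantive step is to verify the nondegeneracy $D_\F(C)\neq0$ for every flag $\F$. By Proposition \ref{prop:equal_volumes}, the volumes $|C_\F|$ are independent of $\F$; call the common value $v$. On the other hand, from \eqref{eq:division_by_simplices} and the pairwise disjointness of the interiors of the simplices $C_\F$ (established in the paragraph following that display), we have
$$|H|=\sum_\F|C_\F|=Nv,$$
where $N$ is the number of flags over $H$. Since $H$ has nonempty interior, $|H|>0$, and therefore $v>0$, which is equivalent to $D_\F(C)\neq0$ for every flag $\F$.

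Since each $D_\F$ is continuous and nonzero at $C$, there exist a neighborhood $U$ of $C$ and signs $\epsilon_\F\in\{-1,+1\}$ such that $\epsilon_\F D_\F(Z)>0$ for all $Z\in U$ and all flags $\F$ (the intersection over the finitely many flags is still a neighborhood of $C$). On $U$ we therefore have
$$V(Z)=\frac{1}{n!}\sum_\F \epsilon_\F\,D_\F(Z),$$
which is a polynomial in the entries of $Z$ and hence infinitely differentiable. There is no real obstacle beyond the nondegeneracy claim, and that follows almost formally from combining Proposition \ref{prop:equal_volumes} with the decomposition \eqref{eq:division_by_simplices}.
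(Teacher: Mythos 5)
Your proof is correct and follows essentially the same route as the paper: express $V$ as a sum of absolute values of determinants and strip the absolute values locally, the key point being $D_\F(C)\neq 0$ for every flag. Your derivation of the nondegeneracy via $|H|=\sum_\F|C_\F|=Nv>0$ is a slightly more explicit justification of the paper's brief remark that each $C_\F$ has nonzero volume, but it is the same argument in substance.
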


Note that the function $V(\cdot)$ is a sum of the volumes of simplices, and the volume of each simplex can be expressed by the absolute value of a determinant function. Since every simplex $C_\F$ has equal volume which is non-zero, the volume function $V(\cdot)$ around the point $C=(c\sub{F})$ is just a sum of the determinant functions. Thus $V(\cdot)$ is infinitely differentiable in a neighborhood of the point $C=(c\sub{F})$.

\section{Differential of the Volume Function $V(\cdot)$}\label{differential_volume_function}

In this section we prove that the condition \eqref{differential_zero} in Proposition \ref{prop:vol_difference}, $$\brA{V'(C),Z}=0 \quad\text{for any }Z=\big(z_F\big)\text{ with all }z\sub{F}\in A\sub{F}-A\sub{F}$$ holds for Hanner polytopes. By linearity it is enough to take $Z=\big(z\sub{F}\big)$ with 
\begin{equation*}
z\sub{F} = \begin{cases} z, & \text{if $F=G$}\\ 
0, & \text{otherwise} 
\end{cases}
\end{equation*}
for a fixed face $G$ and $z\in A_{G}-A_{G}$. Since
\begin{align*}
\brA{V'(C),Z}&=\lim_{t\rightarrow0}{V(C+tZ)-V(C)\over t} \\
&= \lim_{t\rightarrow0}\frac1t\brR{\sum_{\F\ni G}\abs{(C+tZ)_\F}-\sum_{\F\ni G}\abs{C_\F}},
\end{align*}
we need to show $\sum\limits_{\F\ni G}\abs{(C+tZ)_\F}=\sum\limits_{\F\ni G}\abs{C_\F}$ for small $t>0$ to get $\brA{V'(C),Z}=0$.

\begin{lemma}\label{lem:stability_for_even}
Let $H$ be a Hanner polytope in $\R^n$ and $C=(c\sub{F})$ the set of centroids of faces of $H$. For $z$, $\xi=(\xi_1,\ldots,\xi_n)\in\R^n$, consider the point $W_{\xi,z}=(w\sub{F})$ defined by $w\sub{F}=\xi_{\dim F+1}\,z$ for each face $F$ of $H$. Then, for any $\xi$ close to the origin,  $V(C+W_{\xi,z})=V(C)$, i.e., $$\sum_\F\abs{(C+W_{\xi,z})_\F}=\sum_\F\abs{C_\F}.$$
\end{lemma}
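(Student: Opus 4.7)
My plan is to exploit the central symmetry of the Hanner polytope. Since $H = -H$, every flag $\F = \set{F^0, \ldots, F^{n-1}}$ over $H$ admits a reflected flag $-\F = \set{-F^0, \ldots, -F^{n-1}}$, and the map $\F \mapsto -\F$ is an involution on the set of flags. I will pair each $\F$ with $-\F$ and show their joint contribution to $V(C + W_{\xi, z})$ equals the $\xi$-independent quantity $\abs{C_\F} + \abs{C_{-\F}}$.

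First I would expand the determinant defining $\abs{(C + W_{\xi, z})_\F}$ by multilinearity in its rows: writing the $k$-th row as $c_{F^{k-1}} + \xi_k z$, every term involving two or more copies of $z$ vanishes, so
\begin{align*}
\det\brR{c_{F^0} + \xi_1 z,\, \ldots,\, c_{F^{n-1}} + \xi_n z} = D_\F + \sum_{k=1}^n \xi_k D_\F^{(k)},
\end{align*}
where $D_\F = \det\brR{c_{F^0}, \ldots, c_{F^{n-1}}}$ and $D_\F^{(k)}$ is the same determinant with the $k$-th row replaced by $z$. Next I would use $c_{-F^k} = -c_{F^k}$ and factor $n$ minus signs out of $D_{-\F}$ and $n-1$ out of $D_{-\F}^{(k)}$ (the $z$-row being unchanged) to obtain the companion identity
\begin{align*}
D_{-\F} + \sum_k \xi_k D_{-\F}^{(k)} = (-1)^n \brR{D_\F - \sum_k \xi_k D_\F^{(k)}}.
\end{align*}
By Proposition \ref{prop:equal_volumes}, $\abs{D_\F}$ equals a fixed positive constant as $\F$ varies, so for $\xi$ in a small enough neighborhood of the origin both quantities $D_\F \pm \sum_k \xi_k D_\F^{(k)}$ retain the sign of $D_\F$. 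Taking absolute values and adding then yields
\begin{align*}
\abs{(C + W_{\xi, z})_\F} + \abs{(C + W_{\xi, z})_{-\F}} = \frac{2\abs{D_\F}}{n!} = \abs{C_\F} + \abs{C_{-\F}},
\end{align*}
independent of $\xi$.

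The only thing left to verify — and the only step I see where something could go wrong — is that $\F \mapsto -\F$ has no fixed points, so that the pairs $\set{\F, -\F}$ partition all flags. But $\F = -\F$ would force each $F^k = -F^k$, and in particular the vertex $F^0$ would have to equal $\set{0}$, contradicting $0 \in {\rm int}(H)$. Summing over the pairs $\set{\F, -\F}$ then gives $V(C + W_{\xi, z}) = V(C)$. Notably, the Hanner structure enters the proof solely through Proposition \ref{prop:equal_volumes}; everything else uses only central symmetry of $H$.
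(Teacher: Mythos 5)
Your argument is correct and rests on exactly the same two facts as the paper's proof: each determinant $\det\brR{c_{F^0}+\xi_1 z,\ldots,c_{F^{n-1}}+\xi_n z}$ is affine in $\xi$ because any term with two $z$-rows vanishes, and central symmetry gives $c_{-F}=-c_F$. The difference is purely in packaging: the paper records these as the statement that $\xi\mapsto V(C+W_{\xi,z})$ is both affine and even (the latter by reindexing the sum via $\F\mapsto -\F$), hence constant, whereas you implement the identical cancellation flag-by-flag by pairing $\F$ with $-\F$. One could also note that the fixed-point check is optional --- even if $\F=-\F$ occurred, your displayed identity would simply read $2\abs{(C+W_{\xi,z})_\F}=2\abs{C_\F}$ --- and that for the sign-stability step you only need each $D_\F\neq0$ together with finiteness of the flag set, so citing Proposition~\ref{prop:equal_volumes} is stronger than necessary, though of course harmless.
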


\begin{proof}
Let $z=(z_1,\ldots,z_n)\in\R^n$.
Consider the function from $\R^n$ to $\R_+$, defined by 
\begin{equation}\label{eq:det_affinemap}
\xi=(\xi_1,\ldots,\xi_n)\longmapsto \det(M+\xi^Tz),
\end{equation}
where $M$ is a $n\times n$ matrix and $\xi^Tz$ is the $n\times n$ matrix with $\xi_i z_j$ in the $(i,j)$-entry. Note that $\frac{\partial(\det(M+\xi^Tz))}{\partial\xi_{i_1}\cdots\partial\xi_{i_k}}$ is equal to the determinant of the matrix obtained from $M$ by replacing the $i_1,\ldots,i_k$-th rows of $M$ with the same row $z$. It follows that all second-order derivatives are zero, so the function \eqref{eq:det_affinemap} is affine. Moreover, since the volume function $V(C+W_{\xi,z})$ can be expressed by a sum of determinant functions similar to \eqref{eq:det_affinemap}, the function 
\begin{equation}\label{eq:det_evenmap}
\xi\longmapsto V(C+W_{\xi,z})
\end{equation}
is also affine. Also, \eqref{eq:det_evenmap} is an even function. Indeed,
\begin{align*}
V(C+W_{-\xi,z})&=\sum_\F \abs{(C-W_{\xi,z})_\F}=\sum_\F \abs{(C-W_{\xi,z})_{-\F}}\\
&=\sum_\F \abs{(-C-W_{\xi,z})_\F}=V(C+W_{\xi,z}). 
\end{align*}
Thus, it should be a constant. It implies that $V(C+W_{\xi,z})=V(C)$ when $\xi$ is close to the origin. 
\end{proof}

\begin{theorem}\label{thm:differential_zero}
Let $H$ be a Hanner polytope in $\R^n$ and $C=(c\sub{F})$ the set of centroids of faces of $H$. Fix a face $G$ of $H$. Take any $\xi_1,\ldots,\xi_n\in\R$ with small absolute values, and $z\in A_G-A_G$ where $A_G$ is the affine subspace defined in Section \ref{Hanner_polytopes}. Consider $W=(w\sub{F})$ with
\begin{equation*}
w\sub{F} = \begin{cases} \xi_{\dim F+1}\,z, & \text{if $F\supset G$ or $F\subset G$}\\ 
0, & \text{otherwise}.
\end{cases}
\end{equation*}
Then $$\sum_{\F\ni G}\abs{(C+W)_\F}=\sum_{\F\ni G}\abs{C_\F}.$$
\end{theorem}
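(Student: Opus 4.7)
My plan is to show that $f(\xi) := \sum_{\F \ni G} |(C+W)_\F|$ is constant in small $\xi$, via linearization combined with a symmetry argument and induction on $\dim H$. For linearization, note that every face $F^i$ in a flag $\F \ni G$ is comparable to $G$, so every row of the matrix whose determinant gives $|(C+W)_\F|$ is shifted in the common direction $z$ with coefficient $\xi_{i+1}$. By multilinearity of the determinant, together with the fact that any two-row expansion in the direction $z$ vanishes,
\begin{equation*}
\det\bigl(c_{F^i} + \xi_{i+1}\,z\bigr)_{i=0}^{n-1} = \det(c_{F^i}) + \sum_{i=0}^{n-1}\xi_{i+1}\,D_i(\F, z),
\end{equation*}
where $D_i(\F, z)$ denotes the determinant obtained by replacing row $i$ by $z$. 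Since the unperturbed determinants are nonzero by Proposition \ref{prop:equal_volumes}, their signs are locally constant; hence $f$ is affine in $\xi$, and its gradient $v(z)$ at $\xi = 0$ is linear in $z$. It therefore suffices to show $v(z) = 0$ on a spanning subset of $A_G - A_G$.

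\emph{Symmetry reduction.} For any $R \in O(n)$ preserving $H$ and fixing $G$ set-wise, $R$ permutes centroids and induces a bijection of $\{\F \ni G\}$, giving $|(C+W_{\xi, z})_\F| = |(C+W_{\xi, Rz})_{R\F}|$ and hence $f(\xi;z) = f(\xi; Rz)$. If moreover $Rz = -z$, then by the linearity of $v$ we conclude $v(z) = v(-z) = -v(z)$, so $v(z) = 0$. I would then prove by induction on $\dim H$ that $A_G - A_G$ is spanned by such ``antisymmetric'' vectors. The base case $\dim H = 1$ is immediate. For $H = H_1 \oplus_p H_2$, I classify $G$ by Lemma \ref{lem:sum_faces} and decompose $A_G - A_G$ according to Definition \ref{def:AF} into (a) summands $A_{F_j} - A_{F_j}$, handled by the inductive hypothesis applied to $(H_j, F_j)$ and lifted by the identity on the other factor, and (b) full ambient summands $\R^{n_{3-j}}$ arising in the $\ell_1$-sum cases with an empty summand, flipped by the central symmetry $-\mathrm{Id}$ of $H_{3-j}$.

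\emph{Main obstacle.} The stubborn case is $G = F_1 \oplus F_2$ in an $\ell_\infty$-sum with both $F_j$ proper: here $A_G - A_G$ contains an extra one-dimensional summand $\spn\{e\}$ with $e = c_{F_1}/(n_1-k_1) - c_{F_2}/(n_2-k_2)$, and every product symmetry $(R_1, R_2) \in \mathrm{Stab}(F_1, H_1) \times \mathrm{Stab}(F_2, H_2)$ fixes $e$. For this direction the pure symmetry argument fails, and cancellation must come from a combinatorial identity. I would handle this by expanding $f(\xi; e)$ via Lemma \ref{lem:gauss_elimination}, parametrizing flags $\F \ni G$ by a type $\sigma \in J$ together with lower-dimensional flags $\F_j \ni F_j$ in $H_j$, and exhibiting the required cancellation in the sum over $\sigma$. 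The specific normalizing factors in Definition \ref{def:AF} (engineered precisely so that condition \eqref{differential_zero} holds) are what make this combinatorial identity work.
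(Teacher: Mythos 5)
Your symmetry observation is sound and rather elegant: if $R\in O(n)$ preserves $H$, fixes $G$ set-wise, and satisfies $Rz=-z$, then $f(\xi;z)=f(\xi;Rz)=f(\xi;-z)=f(-\xi;z)$, which together with the affine dependence on $\xi$ forces $f(\cdot\,;z)$ to be constant. This cleanly handles the full ambient summands $\R^{n_{3-j}}$ (replacing the paper's Lemma~\ref{lem:stability_for_even}), and you correctly single out the extra direction $e=c_{F_1}/(n_1-k_1)-c_{F_2}/(n_2-k_2)$ in the $\ell_\infty$ case as the one that requires an actual computation rather than symmetry.

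However, the inductive scheme you propose does not close. Your inductive claim is that ``$A_G-A_G$ is spanned by antisymmetric vectors,'' and you yourself note that this is false when $G=F_1\oplus F_2$ is in an $\ell_\infty$-sum with both $F_j$ proper. Such an $\ell_\infty$-sum with both summands proper can occur at any level of the recursive decomposition of $H$, so the inductive hypothesis you wish to apply to $(H_j,F_j)$ may itself fail. Concretely, for $z\in A_{F_j}-A_{F_j}$, the lift $R_j\oplus\mathrm{Id}$ produces an antisymmetric vector for $(H,G)$ only when $z$ is already antisymmetric for $(H_j,F_j)$; when the lower level has the $\ell_\infty$ obstacle, the extra direction $e_j\subset A_{F_j}-A_{F_j}$ admits no flipping symmetry of $H_j$, and knowing $v_{H_j,F_j}(e_j)=0$ by a combinatorial identity does not by itself give $v_{H,G}(e_j\oplus0)=0$. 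The missing ingredient is a reduction that transfers $v_{H_j,F_j}(z)=0$ to $v_{H,G}(z\oplus 0)=0$ without a symmetry. In the paper this is exactly Lemma~\ref{lem:gauss_elimination} together with the factorization of $\abs{(C+W)_\F}$ in formula \eqref{differential_vol1} (and its $\ell_\infty$ analogue \eqref{differential_vol2}), which lets the sum over flags $\F\ni G$ factor through the lower-dimensional sum over $\F_1\ni G_1$; you invoke Lemma~\ref{lem:gauss_elimination} only for the direction $e$, but it is also needed for the ``easy'' summands $A_{F_j}-A_{F_j}$. Once you supply that reduction your argument essentially coincides with the paper's proof, with your symmetry trick serving as a small simplification of Lemma~\ref{lem:stability_for_even} rather than a different route. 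Finally, note that the key combinatorial cancellation \eqref{eq:sum_phi12_m12} for the direction $e$ is deferred rather than proved, and that identity is where the particular normalizations in Definition~\ref{def:AF} actually earn their keep.
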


\begin{proof}
Let $H$ be the $\ell_1$ or $\ell_\infty$-sum of two Hanner polytopes $H_1\subset\R^{n_1}$ and $H_2\subset\R^{n_2}$. Assume that $\R^n=\R^{n_1}\oplus\R^{n_2}$ is the orthogonal sum of $\R^{n_1}$ and $\R^{n_2}$  Fix a face $G$ of $H$, and consider a flag $\F=\set{F^0,\ldots,F^{n-1}}$ over $H$ containing $G$. Then $\F$ induces two lower dimensional flags $\F_1=\{F_1^0,\ldots,F_1^{n_1-1}\}$, $\F_2=\{F_2^0,\ldots,F_2^{n_2-1}\}$ defined in \eqref{lower_flags}. Denote by $F_j^{-1}=\varnothing$ and $F_j^{n_j-1}=H_j$ the improper faces of $H_j$  for each $j=1,2$. Let 
\begin{equation*}
m=\begin{cases}
1+\dim G, &\text{for the $\ell_1$-sum case}\\
n-\dim G, &\text{for the $\ell_\infty$-sum case}.
\end{cases}
\end{equation*}
From \eqref{face_form1}, \eqref{face_form2}, the face $G$ can be written by
\begin{equation}\label{eq:decomposition_G}
G=\begin{cases}
F^{m-1}=F_1^{m_1-1}\oplus_1 F_2^{m_2-1}, & \text{for the $\ell_1$-sum case}\\
F^{n-m}=F_1^{n_1-m_1}\oplus F_2^{n_2-m_2}, & \text{for the $\ell_\infty$-sum case}
\end{cases}
\end{equation}
where $m_1$, $m_2$ are integers satisfying $0\le m_1\le n_1$ $0\le m_2\le n_2$ and $m_1+m_2=m$. Suppose that $\F$ is of type $\sigma=(j_1,\ldots,j_n)$. Since $\F$ contains $G$, it follows from the above representation of $G$ and Remark \ref{rmk:flag_form} that the number of $j$'s among the first $m$ entries of $\sigma$ is $m_j$ for each $j=1,2$. Thus, the type $\sigma$ of a flag $\F$ containing the face $G$ can be viewed as an element of 
\begin{equation}\label{def_Jm}
J_m=\set{\sigma=(j_1,\ldots,j_n)\in\set{1,2}^n\Big|\,\,\substack{\sigma_1(n)=n_1,\\ \sigma_2(n)=n_2,}\,\substack{\sigma_1(m)=m_1\\ \sigma_2(m)=m_2}\,}
\end{equation}
where $\sigma_j(k)$ denotes the number of $j$'s among the first $k$ entries $j_1,\ldots,j_k$ of $\sigma=(j_1,\ldots,j_n)$ for $j=1,2$. As in Remark \ref{rmk:flag_form}, every flag over the $\ell_1$ or $\ell_\infty$ sum of $H_1$, $H_2$ containing the face $G$ is associated with an triple consisting of an element of $J_m$ and two lower dimensional flags over $H_1$, $H_2$ containing the corresponding summands in the decomposition \eqref{eq:decomposition_G} of $G$.\\

\noindent{\bf CASE ($H=H_1\oplus_1 H_2$)}.\quad It follows from Remark \ref{rmk:flag_form} that each element of a flag $\F=\set{F^0,\ldots,F^{n-1}}$ of type $\sigma$ is expressed by $$F^{k-1}=F_1^{\sigma_1(k)-1}\oplus_1 F_2^{\sigma_2(k)-1},\quad(1\le k\le n).$$
By Lemma \ref{centroid_formula}, the centroid of $F^{k-1}$ is given by
$$c\sub{F^{k\!-\!1}}=\frac1k\brR{\sigma_1(k)\,c_{\!F_1^{\sigma_1\!(k)\!-\!1}}+\sigma_2(k)\,c_{\!F_2^{\sigma_2\!(k)\!-\!1}}}.$$
So, the volume of the simplex $(C+W)_\F$ is equal to the absolute value of
\begin{align*}
\frac1{n!}\det\brR{\begin{array}{c}
c\sub{F^0}+\xi_1 z\\ \vdots\\ 
c\sub{F^{k-1}}+\xi_k z\\ \vdots\\
c\sub{F^{n-1}}+\xi_n z\\ 
\end{array}}
&=\frac1{n!^2}\det\brR{\begin{array}{c}
\vdots\\ \sigma_1(k)\,c_{\!F_1^{\sigma_1\!(k)\!-\!1}}+\sigma_2(k)\,c_{\!F_2^{\sigma_2\!(k)\!-\!1}} + (k\xi_k) z\\ \vdots\\
\end{array}}.
\end{align*}
By Lemma \ref{lem:gauss_elimination}, the absolute value of the determinant of the matrix whose rows are $$\sigma_1(k)\,c_{\!F_1^{\sigma_1\!(k)\!-\!1}}+\sigma_2(k)\,c_{\!F_2^{\sigma_2\!(k)\!-\!1}} + (k\xi_k) z,\quad k=1,\ldots,n$$ is equal to that of the matrix whose rows are
\begin{align*}
k_1\,c_{\!F_1^{k_1-1}} + k_1\bar\phi_1(k_1) z, \quad &k_1=1,\ldots,n_1\\ 
k_2\,c_{\!F_2^{k_2-1}} + k_2\bar\phi_2(k_2) z, \quad &k_2=1,\ldots,n_2
\end{align*}
where $\bar\phi_1$, $\bar\phi_2$ are some functions depending on $\sigma\in J_m$ and $\xi_1,\ldots,\xi_n$. The exact formulas for $\bar\phi_1$, $\bar\phi_2$ are not needed in this proof of the $\ell_1$-sum case. Thus, the volume of the simplex $(C+W)_\F$ is equal to the absolute value of 
\begin{equation}\label{eq:volume_simplex_CW_F}
\frac1{n!^2}\det\brR{\begin{array}{c}
\vdots\\ k_1\,c_{\!F_1^{k_1-1}} + k_1\bar\phi_1(k_1) z\\ 
k_2\,c_{\!F_2^{k_2-1}} + k_2\bar\phi_2(k_2) z\\ \vdots\\
\end{array}}=\frac{n_1!\,n_2!}{n!^2}\det\brR{\begin{array}{c}
\vdots\\ c_{\!F_1^{k_1-1}} + \bar\phi_1(k_1) z\\ 
c_{\!F_2^{k_2-1}} + \bar\phi_2(k_2) z\\ \vdots\\
\end{array}}.
\end{equation}
Note that $z\in A_G-A_G$, and $G$ is written from \eqref{eq:decomposition_G} by 
\begin{equation*}
G=G_1\oplus_1G_2\quad\text{for }\, G_1=F_1^{m_1-1}\text{ and } G_2=F_2^{m_2-1}.
\end{equation*}
Also, from \eqref{def_AF1} we have 
\begin{equation*}
A_{G}-A_{G}=
\begin{cases}
(A_{G_1}-A_{G_1}) + (A_{G_2}-A_{G_2}), &\text{if }G_1\neq\varnothing, G_2\neq\varnothing\\
(A_{G_1}-A_{G_1}) + \R^{n_2}, &\text{if }G_1\neq\varnothing, G_2=\varnothing \\
\,\,\R^{n_1} + (A_{G_2}-A_{G_2}), &\text{if }G_1=\varnothing, G_2\neq\varnothing.
\end{cases}
\end{equation*}
Without loss of generality, we may assume one of the following three cases.
\begin{enumerate}
\item $z\in A_{G_1}-A_{G_1}$ when $G_1\neq\varnothing$, $G_2\neq\varnothing$.
\item $z\in A_{G_1}-A_{G_1}$ when $G_1\neq\varnothing$, $G_2=\varnothing$.
\item $z\in \R^{n_1}$\quad when $G_1=\varnothing$, $G_2\neq\varnothing$.
\end{enumerate}
Since $z\in\R^{n_1}$ in any cases, from \eqref{eq:volume_simplex_CW_F} we have
\begin{align}\label{differential_vol1}
\abs{(C+W)_\F} &=\frac{n_1! n_2!}{n!^2}\abs{\det\brR{\begin{array}{ccc|c}
c\sub{F_1^0}&+&\bar\phi_1 (1)z &0 \\
&\vdots&&\vdots \\
c_{\!F_1^{n_1\!-\!1}}&+&\bar\phi_1 (n_1)z  &0 \\
\hline
&&\bar\phi_2 (1)z & c\sub{F_2^0}\\
&&\vdots&\vdots \\
&&\bar\phi_2 (n_2)z & c_{\!F_2^{n_2-1}}
\end{array}}} \notag \\
&=\brR{\frac{n_1!n_2!}{n!}}^2\Abs{(C_1+W_\sigma)_{\F_1}}\cdot\Abs{(C_2)_{\F_2}}
\end{align}
where $W_\sigma=(w\sub{F})$ is defined by 
\begin{equation*}
w\sub{F} = \begin{cases} \bar\phi_1 (\dim F+1)\, z, & \text{if $F\supset G_1$ or $F\subset G_1$}\\ 
0, & \text{otherwise.} 
\end{cases}
\end{equation*}
Consider the first case that $z\in A_{G_1}-A_{G_1}$ when $G_1\neq\varnothing$ and $G_2\neq\varnothing$. Since $G=G_1\oplus_1G_2$,  
\begin{equation*}
\sum_{\F\ni G}\abs{(C+W)_\F}=\sum_{\sigma\in J_m}\sum_{\F_1\ni G_1}\,\sum_{\F_2\ni G_2}\abs{(C+W)_\F},
\end{equation*}
where the flag $\F$ on the right hand side means the flag induced by lower dimensional flags $\F_1$, $\F_2$ and a type $\sigma$ as in Remark \ref{rmk:flag_form}.
It follows from \eqref{differential_vol1} that
\begin{align}\label{eq:C+W_LHS}
\sum_{\F\ni G}\abs{(C+W)_\F}&={\rm (Const)}\sum_{\sigma\in J_m}\,\,\sum_{\F_1\ni G_1}\abs{(C_1+W_\sigma)_{\F_1}},
\end{align}
where ${\rm(Const)}=\brR{\frac{n_1!n_2!}{n!}}^2\sum_{\F_2\ni G_2}\abs{(C_2)_{\F_2}}$. Similarly, letting $\xi_1=\cdots=\xi_n=0$ we get 
\begin{equation}\label{eq:C_RHS}
\sum_{\F\ni G}\abs{C_\F}={\rm (Const)}\sum_{\sigma\in J_m}\,\,\sum_{\F_1\ni G_1}\abs{(C_1)_{\F_1}}.
\end{equation}
Since $$\sum_{\F_1\ni G_1}\abs{(C_1+W_\sigma)_{\F_1}}=\sum_{\F_1\ni G_1}\abs{(C_1)_{\F_1}}$$ by the induction hypothesis, the right hand side of \eqref{eq:C+W_LHS} is the same as that of \eqref{eq:C_RHS}, which implies $\sum_{\F\ni G}\abs{(C+W)_\F}=\sum_{\F\ni G}\abs{C_\F}$. Similarly, in the second case, we have the same conclusion with a different constant  ${\rm(Const)}=\brR{\frac{n_1!n_2!}{n!}}^2\sum_{\F_2}\abs{(C_2)_{\F_2}}$ for \eqref{eq:C+W_LHS}.

Consider the third case that $z\in \R^{n_1}$ when $G_1=\varnothing$ and $G_2\neq\varnothing$. In this case, note that $W_\sigma=(w\sub{F})$ is the point with $w\sub{F}=\bar\phi_1 (\dim F+1)\, z$ for each face $F$ of $H_1$ (without any other restrictions on $F$). So, Lemma \ref{lem:stability_for_even} gives $$\sum_{\F_1}\abs{(C+W_\sigma)_\F}=\sum_{\F_1}\abs{C_\F}.$$ Therefore,
\begin{align*}
\sum_{\F\ni G}\abs{(C+W)_\F}&={\rm (Const)}\sum_{\sigma\in J_m}\sum_{\F_1}\abs{(C+W_\sigma)_\F} ={\rm (Const)}\sum_{\sigma\in J_m}\sum_{\F_1}\abs{C_\F}=\sum_{\F\ni G}\abs{C_\F},
\end{align*}
which completes the proof for the $\ell_1$-sum case.\\

\noindent{\bf CASE ($H=H_1\oplus_\infty H_2$)}.\quad From Remark \ref{rmk:flag_form}, each element of the flag $\F=\set{F^0,\ldots,F^{n-1}}$ of type $\sigma$ can be expressed by $$F^{n-k}=F_1^{n_1-\sigma_1(k)}\oplus F_2^{n_2-\sigma_2(k)}.$$
Note that $z\in A_G-A_G$, and by \eqref{eq:decomposition_G} 
\begin{equation*}
G=G_1\oplus G_2\quad\text{for }\, G_1=F_1^{n_1-m_1}\text{ and } G_2=F_2^{n_2-m_2}.
\end{equation*}
In addition, $A_{G}-A_{G}$ can be written from \eqref{def_AF2} by
\begin{align*}
A_{G_1}-A_{G_1},\quad &\text{if }G_1\neq H_1, G_2=H_2 \\
A_{G_2}-A_{G_2},\quad &\text{if }G_1=H_1, G_2\neq H_2 
\end{align*}
and
\begin{equation*}
(A_{G_1}-A_{G_1}) + (A_{G_2}-A_{G_2}) + \spn\set{\frac1{m_1}c^{}_{G_1} \!-\! \frac1{m_2}c^{}_{G_2}}\quad \text{if }G_1\neq H_1, G_2\neq H_2.
\end{equation*}
Without loss of generality, we may consider the following three cases: 
\begin{enumerate}
\item $z\in A_{G_1}-A_{G_1}$ when $G_1\neq H_1$, $G_2=H_2$.
\item $z\in A_{G_1}-A_{G_1}$ when $G_1\neq H_1$, $G_2\neq H_2$.
\item $z\in\spn\set{\frac1{m_1}c^{}_{G_1} \!-\! \frac1{m_2}c^{}_{G_2}}$ when $G_1\neq H_1$, $G_2\neq H_2$.
\end{enumerate}
The volume of the simplex $(C+W)_\F$ is, up to a multiple of $\pm1$,
\begin{align*}
\frac{1}{n!}\det\brR{\begin{array}{c}
c\sub{F^0}+\xi_1 z\\ \vdots\\ 
c\sub{F^{n-k}}+\xi_k z\\ \vdots\\
c\sub{F^{n-1}}+\xi_n z\\ 
\end{array}}
&=\frac{1}{n!}\det\brR{\begin{array}{c}
\vdots\\ c_{\!F_1^{n_1-\sigma_1(k)}}+c_{\!F_2^{n_2-\sigma_2(k)}} + \xi_k z\\ \vdots\\
\end{array}}.
\end{align*}
Here, for simplicity, $\xi_1,\ldots,\xi_n$ are rearranged by $\xi_{n-\dim F}$ instead of $\xi_{\dim F+1}$.
By Lemma \ref{lem:gauss_elimination}, the determinant of the matrix whose rows are $$c_{\!F_1^{n_1-\sigma_1(k)}}+c_{\!F_2^{n_2-\sigma_2(k)}} + \xi_k z,\quad k=1,\ldots,n$$ is equal to that of the matrix whose rows are
\begin{align*}
c_{\!F_1^{k_1-1}} + \phi_1^\sigma(k_1) z, \quad &k_1=1,\ldots,n_1\\ 
c_{\!F_2^{k_2-1}} + \phi_2^\sigma(k_2) z, \quad &k_2=1,\ldots,n_2.
\end{align*}
where $\phi_1^\sigma$, $\phi_2^\sigma$ are the functions defined in \eqref{eq:def_phi_j}.
Thus
\begin{align}\label{differential_vol2}
\abs{(C+W)_\F} &=\frac1{n!}\abs{\det\brR{\begin{array}{c}
\vdots\\ c_{\!F_1^{n_1-k_1}} + \phi_1^\sigma(k_1) z\\ 
c_{\!F_2^{n_2-k_2}} + \phi_2^\sigma(k_2) z\\ \vdots\\
\end{array}}}.
\end{align}
For the first two cases $z\in A_{G_1}-A_{G_1}$ when $G_1\neq H_1$, we can conclude by the same argument as in the $\ell_1$-sum case to get $\sum_{\F\ni G}\abs{(C+W)_\F}=\sum_{\F\ni G}\abs{C_\F}$. For the third one, let $z=\frac1{m_1}c^{}_{G_1} \!-\! \frac1{m_2}c^{}_{G_2}$. In this case, the $2\times n$ sub-matrix from \eqref{differential_vol2}
\begin{align*}
\brR{\begin{array}{c}
c^{}_{G_1} + \phi_1^\sigma(m_1) z\\ 
c^{}_{G_2} + \phi_2^\sigma(m_2) z
\end{array}}
&=\brR{\begin{array}{rcr}
\brS{1+\frac{\phi_1^\sigma(m_1)}{m_1}}c^{}_{G_1} &-& \frac{\phi_1^\sigma(m_1)}{m_2}\,c^{}_{G_2} \\ 
\frac{\phi_2^\sigma(m_2)}{m_1}\,c^{}_{G_1} &+& \brS{1-\frac{\phi_2^\sigma(m_2)}{m_2}}c^{}_{G_2}  
\end{array}}
\end{align*}
is, under the Gauss elimination,  equivalent to 
\begin{align*}
\brR{\begin{array}{c}
\la_1\,c^{}_{G_1}\\ 
\la_2\,c^{}_{G_2}
\end{array}}
\end{align*}
where $\la_1=1+\frac{\phi_1^\sigma(m_1)}{m_1}$ and $\la_2=1-\frac{\phi_2^\sigma(m_2)}{m_2}+\frac{\phi_1^\sigma(m_1)}{m_2}\frac{\phi_2^\sigma(m_2)}{m_1}\big[1+\frac{\phi_1^\sigma(m_1)}{m_1}\big]^{-1}$. So, $$\la_1\la_2=1+\frac{\phi_1^\sigma(m_1)}{m_1}-\frac{\phi_2^\sigma(m_2)}{m_2}.$$
Since $z$ is a linear combination of two rows $c^{}_{G_1}$ and $c^{}_{G_2}$, the $z$-term in each row of the matrix in \eqref{differential_vol2} disappears through the Gauss elimination, that is, the absolute value of
\begin{align*}
\det\brR{\begin{array}{c}
\vdots\\ c_{\!F_1^{n_1-k_1}} + \phi_1^\sigma(k_1) z\\ 
c_{\!F_2^{n_2-k_2}} + \phi_2^\sigma(k_2) z\\ \vdots\\
\end{array}}
=\brR{1+\frac{\phi_1^\sigma(m_1)}{m_1} - \frac{\phi_2^\sigma(m_2)}{m_2}} \cdot
\det\brR{\begin{array}{c}
\vdots\\ c\sub{F_1^{n_1-k_1}} + \phi_1^\sigma(k_1) z\\ 
c_{\!F_2^{n_2-k_2}} + \phi_2^\sigma(k_2) z\\ \vdots\\
c^{}_{G_1}\\ 
c^{}_{G_2}\\
\vdots\\ c_{\!F_1^{n_1-k_1'}} + \phi_1^\sigma(k_1') z\\ 
c_{\!F_2^{n_2-k_2'}} + \phi_2^\sigma(k_2') z\\ \vdots\\
\end{array}}
\end{align*}
is equal to
\begin{align}\label{differential_vol3}
\abs{(C+W)_\F} &=\brR{1+\frac{\phi_1^\sigma(m_1)}{m_1} - \frac{\phi_2^\sigma(m_2)}{m_2}}\cdot\frac1{n!}
\abs{\det\brR{\begin{array}{c}
c\sub{F_1^0}\\ \vdots\\  c\sub{F_1^{n_1\!-\!1}}
\end{array}}\cdot
\det\brR{\begin{array}{c}
c\sub{F_2^0}\\ \vdots\\  c\sub{F_2^{n_2\!-\!1}}
\end{array}}} \notag \\
&= \brR{1+\frac{\phi_1^\sigma(m_1)}{m_1} - \frac{\phi_2^\sigma(m_2)}{m_2}}\cdot\frac{n_1!\,n_2!}{n!}\abs{(C_1)_{\F_1}}\abs{(C_2)_{\F_2}}.
\end{align}
From $G=G_1\oplus G_2$, we get
\begin{align*}
\sum_{\F\ni G}\abs{(C+W)_\F}&=\sum_{\sigma\in J_m}\sum_{\F_1\ni G_1}\,\sum_{\F_2\ni G_2}\abs{(C+W)_\F},
\end{align*}
where the flag $\F$ on the right hand side is the flag induced by lower dimensional flags $\F_1$, $\F_2$ and a type $\sigma$ as in Remark \ref{rmk:flag_form}.
The formula \eqref{differential_vol3} implies
\begin{equation}\label{eq:RHS}
\sum_{\F\ni G}\abs{(C+W)_\F}={\rm (Const)}\sum_{\sigma\in J_m}\brR{1+\frac{\phi_1^\sigma(m_1)}{m_1} - \frac{\phi_2^\sigma(m_2)}{m_2}}
\end{equation}
where ${\rm(Const)}=\frac{n_1!n_2!}{n!}\sum_{\F_1\ni G_1}\sum_{\F_2\ni G_2}\abs{(C_1)_{\F_1}}\abs{(C_2)_{\F_2}}$. Similarly, letting $\xi_1=\cdots=\xi_n=0$, we get $\phi_1^\sigma(k)=0=\phi_2^\sigma(k)$ for each $k$ and hence
\begin{equation}\label{eq:LHS}
\sum_{\F\ni G}\abs{C_\F}={\rm (Const)}\sum_{\sigma\in J_m}1.
\end{equation}
To complete the proof from \eqref{eq:RHS} and \eqref{eq:LHS}, we claim that 
\begin{equation}\label{eq:sum_phi12_m12}
\sum_{\sigma\in J_m}\brR{\frac{\phi_1^\sigma(m_1)}{m_1} - \frac{\phi_2^\sigma(m_2)}{m_2}}=0.
\end{equation}
It follows from \eqref{eq:phi_sigma_jk1_1}, \eqref{eq:phi_sigma_jk1_2}, \eqref{eq:phi_sigma_jk2_1}, \eqref{eq:phi_sigma_jk2_2} that
\begin{equation*}
\frac{\phi_1^\sigma(m_1)}{m_1} - \frac{\phi_2^\sigma(m_2)}{m_2}=
\frac{(-1)^{1+j_m}}{m_{j_m}}\xi_m+\brR{\frac1{m_1}+\frac1{m_2}}\Big(\sum\limits_{I_{\!12}^\sigma\cap[1,m)}\xi_\ell - \sum\limits_{I_{\!21}^\sigma\cap[1,k)}\xi_\ell\Big),
\end{equation*}
where $I_{\!12}^\sigma=\set{\ell:\, j_\ell=1,\, j_{\ell+1}=2}$ and $I_{\!21}^\sigma=\set{\ell:\, j_\ell=2,\, j_{\ell+1}=1}$.
Therefore,
\begin{align*}
\sum_{\sigma\in J_m}\brR{\frac{\phi_1^\sigma(m_1)}{m_1} - \frac{\phi_2^\sigma(m_2)}{m_2}}=\brR{\frac{\mu_m^1}{m_1}-\frac{\mu_m^2}{m_2}}\xi_m + \brR{\frac1{m_1}+\frac1{m_2}}\sum_{\ell=1}^{m-1} (\mu_\ell^{12}-\mu_\ell^{21})\xi_\ell,
\end{align*}
where $\mu_m^1$, $\mu_m^2$ and $\mu_\ell^{12}$, $\mu_\ell^{21}$ for $\ell=1,\ldots,m-1$ are the numbers given by
\begin{align*}
\mu_m^1 &= \#\Set{\sigma=(j_1,\ldots,j_n)\in J_m:j_m=1} = \binom{m-1}{m_1-1}\cdot\binom{n-m}{n_1-m_1}\\
\mu_m^2 &= \#\Set{\sigma=(j_1,\ldots,j_n)\in J_m:j_m=2} = \binom{m-1}{m_2-1}\cdot\binom{n-m}{n_1-m_1}\\
\mu_\ell^{12} &= \#\Set{\sigma=(j_1,\ldots,j_n)\in J_m:j_\ell=1, j_{\ell+1}=2}= \binom{m-2}{m_1-1}\cdot\binom{n-m}{n_1-m_1}\\
\mu_\ell^{21} &= \#\Set{\sigma=(j_1,\ldots,j_n)\in J_m:j_\ell=2, j_{\ell+1}=1}= \binom{m-2}{m_2-1}\cdot\binom{n-m}{n_1-m_1}.
\end{align*}
We can easily see that $\frac{\mu_m^1}{m_1}=\frac{\mu_m^2}{m_2}$ and $\mu_\ell^{12}=\mu_\ell^{21}$ from $n_1+n_2=n$ and $m_1+m_2=m$, which implies \eqref{eq:sum_phi12_m12} and completes the proof for the $\ell_\infty$-sum case.
\end{proof}

\begin{corollary}\label{cor:differential_zero}
Let $H$ be a Hanner polytope in $\R^n$, $C=(c\sub{F})$ the set of centroids of faces of $H$, and $A\sub{F}$'s the affine subspaces defined from $H$ as in Section \ref{Hanner_polytopes}. Then $$\brA{V'(C),Z}=0 \quad\text{for any }Z=\big(z\sub{F}\big)\text{ with all }z\sub{F}\in A\sub{F}-A\sub{F}.$$
\end{corollary}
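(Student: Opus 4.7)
The plan is to derive this directly from Theorem \ref{thm:differential_zero} by exploiting the linearity of $Z\mapsto\brA{V'(C),Z}$. Since $V$ is smooth near $C$ by Corollary \ref{cor:differentiable}, this map is linear, and the admissible set $\set{(z\sub{F}):z\sub{F}\in A\sub{F}-A\sub{F}}$ is a direct sum of the subspaces $A\sub{F}-A\sub{F}$ indexed by faces $F$. It therefore suffices to verify the vanishing for each $Z$ supported at a single face: I fix a face $G$ of $H$ and a vector $z\in A\sub{G}-A\sub{G}$, and consider
$$z\sub{F}=\begin{cases}z,&F=G,\\ 0,&F\ne G.\end{cases}$$

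Only those flags $\F$ that contain $G$ produce simplices whose vertex list depends on $t$, so the finite difference reduces to
$$V(C+tZ)-V(C)=\sum_{\F\ni G}\brR{\abs{(C+tZ)_\F}-\abs{C_\F}}.$$
I then invoke Theorem \ref{thm:differential_zero} with the parameter choice $\xi_{\dim G+1}=t$ and $\xi_k=0$ for $k\neq\dim G+1$. In the corresponding vector $W=(w\sub{F})$, the entry $w\sub{F}=\xi_{\dim F+1}\,z$ can be nonzero only when $\dim F=\dim G$; combined with the requirement $F\supset G$ or $F\subset G$, this forces $F=G$. Hence $W=tZ$, and the theorem yields $\sum_{\F\ni G}\abs{(C+tZ)_\F}=\sum_{\F\ni G}\abs{C_\F}$ for all sufficiently small $t$.

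Therefore $V(C+tZ)=V(C)$ in a neighbourhood of $t=0$, and dividing by $t$ and letting $t\to 0$ gives $\brA{V'(C),Z}=0$. All the combinatorial and determinantal work has been absorbed into Theorem \ref{thm:differential_zero}, so the only thing to check in the corollary is that the single-face perturbation $Z$ is recovered as the specialization $\xi_{\dim G+1}=t$, $\xi_k=0$ otherwise, of the broader family treated there; this reduces to the trivial observation that a proper containment $F\supsetneq G$ or $F\subsetneq G$ strictly changes the dimension. I anticipate no further obstacle.
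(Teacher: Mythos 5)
Your proof is correct and follows essentially the same route as the paper: reduce by linearity to a perturbation supported at a single face $G$, then specialize Theorem \ref{thm:differential_zero} to $\xi_{\dim G+1}=t$ and all other $\xi_k=0$ so that $W=tZ$, giving $V(C+tZ)=V(C)$ for small $t$ and hence $\brA{V'(C),Z}=0$. The only difference is that you spell out explicitly why the specialization forces $F=G$ (equal dimension plus containment), which the paper leaves implicit.
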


\begin{proof}
By linearity it is enough to take $Z=\big(z\sub{F}\big)$ with 
\begin{equation*}
z\sub{F} = \begin{cases} z, & \text{if $F=G$}\\ 
0, & \text{otherwise} 
\end{cases}
\end{equation*}
for a face $G$ and $z\in A_{G}-A_{G}$. Letting all $\xi_1,\ldots,\xi_n$ be zero except $\xi_{1+\dim G}$ in Theorem \ref{thm:differential_zero}, we have
\begin{align*}
\brA{V'(C),Z}&=\lim_{t\rightarrow0}{V(C+tZ)-V(C)\over t} = \lim_{t\rightarrow0}\frac1t\brR{\sum_{\F\ni G}\abs{(C+tZ)_\F}-\sum_{\F\ni G}\abs{C_\F}}=0,
\end{align*}
which completes the proof.
\end{proof}


\section{Distance between a body $K$ and a polytope }\label{delta_gap}

We recall that every Hanner polytope in $\R^n$ can be obtained from $n$ symmetric intervals in $\R^n$ by taking the $\ell_1$ or $\ell_\infty$ sums. In particular, a Hanner polytope in $\R^n$ is called \emph{standard} if it is obtained from the intervals $[-e_1,e_1],\ldots,[-e_n,e_n]$ by taking the $\ell_1$ or $\ell_\infty$ sums. It is easy to see that every Hanner polytope is a linear image of a standard Hanner polytope. Since the volume product of a symmetric convex body is invariant under linear transformations, we can start with fixing $H$ as a standard Hanner polytope for the proof of Main theorem.

It is known (e.g.\ see \cite{Re3} or \cite{KimL}) that every standard Hanner polytope $H$ in $\R^n$ can be associated with a graph $G$ with the vertex set $\set{1,\cdots,n}$ and the edge set defined as follows: two different points $i,j\in\set{1,\cdots,n}$ are connected by an edge of $G$ if $e_i+e_j$ does not belong to $H$. We write
\begin{align*}
i\sim j \quad&\text{if \,\,$e_i+e_j\notin H$,  $i\neq j$,}\\
i\nsim j \quad&\text{if \,\,$e_i+e_j\in H$,  $i\neq j$.}
\end{align*}
In fact, if $i$, $j$ are connected to each other by an edge of $G$, the section of $H$ by $\spn\set{e_i,e_j}$ is the unit ball of 2-dimensional $\ell_1$. Otherwise, the section should be the unit ball of 2-dimensional $\ell_\infty$. Thus, the graph associated with the cross-polytope $B_1^n$ is the complete graph with $n$ vertices (i.e., every pair of vertices is connected), and the graph associated with the cube $B_\infty^n$ is the complement of the complete graph (i.e., no pair of vertices is connected). It is interesting to remark (\cite{Se}, \cite{Re3}) that standard Hanner polytopes are in one-to-one correspondence with the graphs which do not contain any induced path of edge length 3. Here, an induced path of a graph $G$ means a sequence of different vertices of $G$ such that each two adjacent vertices in the sequence are connected by an edge of $G$, and each two nonadjacent vertices in the sequence are not connected.

Given a graph $G$, a subset $J$ of the vertex set is called a \emph{clique} of $G$ if any two points in $J$ are connected by an edge of $G$. A subset $I$ of the vertex set is called an \emph{independent set} of $G$ if any two points in $I$ are not connected by an edge. 

Let $G$ be the graph associated with a standard Hanner polytope $H$ as above. It turns out \cite{Re3} that a point $v\in\R^n$ is a vertex of $H$ if and only if each coordinate of $v$ is $-1$, $0$, or $1$, and the set  $$\supp(v)=\set{j:\brA{v,e_j}\neq 0},$$ called the {\it support} of $v$, is a maximal independent set of $G$. Here, the maximality of cliques and independent sets comes from the partial order of inclusion. Similarly, a point $v^\star\in\R^n$ is a vertex of $H^\circ$ if and only if each coordinate of $v^\star$ is $-1$, $0$, or $1$, and the support of $v^\star$ is a maximal clique of $G$.
In addition, it is known \cite{Ha,HL,Re3} that every Hanner polytope $H$ satisfies {\it CL-property}: $\abs{\brA{v,v^\star}}=1$ for every vertex $v$ of $H$ and every vertex $v^\star$ of $H^\circ$. In other words, if $G$ is the graph associated with a standard Hanner polytope, then 
\begin{quote}
there exists a unique common element between any {\it maximal independent set} and any {\it maximal clique in $G$}.
\end{quote}
In this section we prove the following result:
\begin{theorem}\label{thm:delta_gaps}
Let $H$ be a standard Hanner polytope in $\R^n$ and $K_0$ a symmetric convex body in $\R^n$ with $\dBM(K_0,H)=1+\delta$ for small $\de>0$. Then, there exists a symmetric convex body $K$ of with $\dH(K,K_0)=1+o(\de)$  such that 
\begin{equation}\label{delta_gaps_pert}
|K||K^\circ|\ge V(X)V(X^*)+c(n)\de
\end{equation}
where $X=(x\sub{F})$, $X^*=(x\sub{F^*})$ are the sets of the $x\sub{F}$-points obtained from $K$, $K^\circ$ as in Definition \ref{def:AF}.
\end{theorem}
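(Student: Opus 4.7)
The strategy is to locate where the Banach--Mazur gap of size $\de$ manifests geometrically and convert it into a linear-order excess in the volume product $\abs{K}\abs{K^\circ}$ over the simplex sum $V(X)V(X^*)$.

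\emph{Step 1 (positioning).} By the affine invariance of $\PP$ and the definition of $\dBM$, we may apply a linear transformation so that $H\subset K_0\subset(1+c\de)H$ (absorbing the transformation into $K$ at the end). Then Proposition \ref{prop:construction} guarantees that every $x\sub{F}$ lies within Euclidean distance $O(\de)$ of $c\sub{F}$, and similarly for every $x\sub{F^*}$. Consequently the star-shaped unions $\bigcup_\F X_\F\subset K_0$ and $\bigcup_\F X^*_{\F^*}\subset K_0^\circ$ consist of pairwise disjoint-interior simplices, yielding the trivial bounds $\abs{K_0}\ge V(X)$ and $\abs{K_0^\circ}\ge V(X^*)$. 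The theorem asks us to improve the product of these by a linear-in-$\de$ amount.

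\emph{Step 2 (localization of the gap).} Because $\dBM(K_0,H)=1+\de$ exactly and not less, there must be at least one point $p\in\partial K_0$ whose Euclidean distance to $\partial H$ is of order $\de$. The combinatorial description of the boundary of $H$ through maximal independent sets (vertices of $H$) and maximal cliques (vertices of $H^\circ$) of the associated graph $G$ lets us place this excess at some specific facet of $H$ or, after polarity, at some specific facet of $H^\circ$. Passing from $K_0$ to a symmetric convex body $K$ with $\dBM(K,K_0)=1+o(\de)$ is used to clean up any inconvenient tangential behavior at the critical face (for instance, to force the tangent hyperplanes at the $x\sub{F}$'s to respect the graph structure), at a cost absorbed into the $o(\de)$ error.

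\emph{Step 3 (induction through the $\ell_p$-sum).} To amplify this localized excess into a genuine $n$-dimensional gap of order $\de$, I would induct on dimension using the defining decomposition $H=H_1\oplus_p H_2$ with $p\in\set{1,\infty}$. Lemmas \ref{lem:sum_faces}--\ref{lem:sum_dual_face} together with Remark \ref{rmk:flag_form} produce a natural section or coordinate projection of $K$ associated with a summand $H_j$, which is itself a symmetric convex body of $\dBM$-distance at most $1+\de$ from $H_j$. The inductive hypothesis supplies a linear-in-$\de$ excess in that lower dimension, which is then lifted back to $\R^n$ by Fubini integration along the complementary summand (in the $\ell_\infty$ case, where $H=H_1+H_2$ is a Minkowski sum) or by the convex-hull / Brunn--Minkowski structure used in the proof of Lemma \ref{centroid_formula} (in the $\ell_1$ case). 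The base case $\dim H=1$ is trivial, since a centrally symmetric convex body in $\R^1$ is an interval and must equal $H$ exactly.

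\emph{Main obstacle.} The principal difficulty is organizing the combinatorial case split --- whether the $\de$-excess lies on the $H_1$-side, the $H_2$-side, or transverse to the decomposition $H=H_1\oplus_p H_2$ --- so that the linear-in-$\de$ gap propagates cleanly through the inductive passage between dimensions, without being degraded to $O(\de^2)$ or swallowed by the $V(X)V(X^*)$ bound of Proposition \ref{prop:split_equal_volumes}. The CL-property of Hanner polytopes --- that every maximal independent set and every maximal clique of $G$ share exactly one vertex --- is the key combinatorial fact that keeps this case analysis from proliferating and ensures that the localization of the $\de$-excess onto a single summand of $H$ is unambiguous. Combined with Propositions \ref{prop:split_equal_volumes}, \ref{prop:vol_difference} and Corollary \ref{cor:differential_zero} from the preceding sections, the resulting inequality $\abs{K}\abs{K^\circ}\ge V(X)V(X^*)+c(n)\de\ge \abs{H}\abs{H^\circ}-c\de^2+c(n)\de$ yields the Main theorem for $\de$ sufficiently small.
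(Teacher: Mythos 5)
The proposal and the paper both recognize that the key task is to localize the $\delta$-excess using the graph/CL structure of $H$ and to pass through a carefully chosen normalization. Beyond that the two routes diverge, and the route you propose has gaps that the paper's argument is specifically designed to avoid.

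First, a structural mismatch. The paper does not induct on dimension through the $\ell_p$-sum decomposition $H=H_1\oplus_p H_2$. Instead it proceeds in three concrete steps (Propositions \ref{prop:step1}--\ref{prop:step3}): (i) replace $K_0$ by a nearby body $K$ satisfying the two-sided sandwich $B_1^n\subset K\subset B_\infty^n$, which is substantially stronger than your $H\subset K\subset(1+c\de)H$; (ii) show, by a contradiction argument using Lemmas \ref{lem:AF_vertex} and \ref{lem:step2}, that for some vertex $v$ of $H$ the projection $K|\R^{(v)}$ is at Hausdorff distance $\ge c\de$ from the lower-dimensional cube $B_\infty^{(v)}$ (or dually for some vertex of $H^\circ$); and (iii) turn this into a first-order volume excess by the construction of the auxiliary point $x^\star=(1-\de)e_1+(1+\eps)\de\,x\sub{v^*}$ and a dichotomy on whether $K\subset(1+\eps^2\de)\bigcup_\F X_\F$. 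The coordinate subspace $\R^{(v)}$ is spanned by a maximal independent set of the graph, and it is generally not the ambient space of either summand $H_1$ or $H_2$ in a decomposition; so the paper is not doing a reduction to a summand, which is what your Step 3 proposes.

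Second, the induction you sketch has a genuine defect at the base. In dimension one every symmetric convex body is a symmetric interval, hence at Banach--Mazur distance exactly $1$ from any one-dimensional $H$ --- the hypothesis $\dBM(K,H)=1+\de$ with $\de>0$ never holds, so there is no nontrivial base case. This is not a cosmetic point: it signals that the $\de$-gap for a Hanner polytope is an intrinsically multi-coordinate phenomenon that is lost if you project onto a single summand. Precisely this issue --- that the gap may sit ``between'' the summands and be $o(\de)$ in each summand's subspace --- is what Proposition \ref{prop:step2} resolves by passing to vertex supports (maximal independent sets and maximal cliques) rather than summand subspaces, and by the delicate Lemma \ref{lem:step2}. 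Your sketch acknowledges the danger of the gap ``being degraded to $O(\de^2)$ or swallowed'' but offers no mechanism to prevent it.

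Third, the proposed lifting from a lower-dimensional excess to an $n$-dimensional one via Fubini or Brunn--Minkowski is not established and does not follow in the form stated. The body $K$ near $H_1\oplus_\infty H_2$ is not itself a Minkowski sum of bodies, and near $H_1\oplus_1 H_2$ it is not a convex hull of lower-dimensional pieces, so neither the Fubini factorization nor the Brunn--Minkowski slicing from Lemma \ref{centroid_formula} yields the desired product bound on $|K||K^\circ|$. The paper bypasses this entirely: once a vertex $v$ with $\dH(K|\R^{(v)},B_\infty^{(v)})\ge c\de$ is found, Proposition \ref{prop:step3} produces the extra volume directly in $\R^n$ by exhibiting either an extra simplex in $K\setminus\bigcup_\F X_\F$ or an extra simplex with apex $x^\star$ in $K^\circ\setminus\bigcup_\F X^*_{\F^*}$, with no inductive passage. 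These concrete constructions, together with the constant $\eps=1-\max\{\langle c\sub{F},c\sub{G^*}\rangle:F\not\subset G\}>0$, are what your sketch lacks.

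In summary, you have identified several of the correct landmarks (special positioning, graph structure of $H$, CL-property, localization of the gap), but the inductive engine you propose does not run: the base case degenerates, the localization onto a summand is not shown, and the promotion of a lower-dimensional gap to a full-dimensional volume excess is unsupported. The paper's argument achieves all three with a different and non-inductive mechanism.
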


We start with some preparatory lemmas and propositions.
\begin{lemma}\label{lem:Ej}
For $j=1,\ldots,n$, consider $E_j=H\cap(e_j+e_j^\perp)$, which is a face of $H$ with centroid $e_j$. Then, 
\begin{align}
\aff(E_j)&=e_j+\spn\set{e_i\in H:i\nsim j}\label{eq:aff_Ej} \\
A\sub{E_j}&=e_j+\spn\set{e_i\in H:i\sim j},\label{eq:A_Ej}.
\end{align}
\end{lemma}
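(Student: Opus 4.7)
The plan is to prove the two identities separately: \eqref{eq:aff_Ej} from a direct vertex analysis, and \eqref{eq:A_Ej} by induction on the $\ell_1/\ell_\infty$-construction of $H$.

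For \eqref{eq:aff_Ej}, I would use the vertex characterization recalled just before the lemma: the vertices of $E_j$ are exactly the vertices $v$ of $H$ with $v_j=1$, i.e.\ those whose support $S=\supp(v)$ is a maximal independent set of $G$ containing $j$. For any such $v$, the vector $v-e_j=\sum_{i\in S\setminus\{j\}}v_i\,e_i$ has support in $\{i:i\nsim j\}$ since $S$ is independent, giving $\aff(E_j)\subseteq e_j+\spn\{e_i:i\nsim j\}$. Conversely, given any $i$ with $i\nsim j$, the pair $\{i,j\}$ is independent and extends to a maximal independent set $S\ni i,j$; fixing any signs on the coordinates indexed by $S\setminus\{i,j\}$ and then flipping only the sign of the $i$-th coordinate produces two vertices $v_\pm\in E_j$ with $v_+-v_-=2e_i$, so $e_i\in\aff(E_j)-e_j$. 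The centroid assertion then follows from the sign-flip symmetries $e_i\mapsto-e_i$ for $i\nsim j$, which preserve $H$, fix $e_j$, and map $E_j$ to itself; hence the centroid of $E_j$ must lie in $\aff(E_j)\cap(e_j+\R e_j)=\{e_j\}$.

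For \eqref{eq:A_Ej}, I would induct on $\dim H$. The base case $\dim H=1$ is immediate from Definition \ref{def:AF}(1): $H=[-e_1,e_1]$, $E_1=\{e_1\}$, $A\sub{E_1}=E_1=\{e_1\}$, and the associated graph has no edges. For the inductive step, write $H=H_1\,\square\,H_2$ with $\square\in\{\oplus_1,\oplus_\infty\}$, $\R^n=\R^{n_1}\oplus\R^{n_2}$, and, without loss of generality, $j\le n_1$. The key structural input is the relation between the graph $G$ of $H$ and the graphs $G_1,G_2$ of $H_1,H_2$: for indices $i,k\le n_1$ one has $i\sim_H k\iff i\sim_{H_1}k$ in both sum cases (using $\conv(H_1\cup H_2)\cap\R^{n_1}=H_1$, valid because $H_2\ni 0$, in the $\ell_1$ case); for $i\le n_1<k$, one has $i\nsim_H k$ in the $\ell_\infty$ case (since $e_i+e_k\in H_1+H_2=H$) and $i\sim_H k$ in the $\ell_1$ case (a decomposition $e_i+e_k=\lambda a+(1-\lambda)b$ with $a\in H_1\subset[-1,1]^{n_1}$, $b\in H_2$, $\lambda\in[0,1]$ would force $\lambda\ge 1$ from the $\R^{n_1}$-projection and thereby kill the $e_k$-component). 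Thus $\ell_\infty$-summation of Hanner polytopes corresponds to the disjoint union of their graphs, while $\ell_1$-summation corresponds to the join.

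To finish, Lemma \ref{lem:sum_faces} identifies $E_j$ as a face of $H$: setting $E_j^{(1)}=H_1\cap(e_j+e_j^\perp)$, one has $E_j=E_j^{(1)}\oplus H_2$ in the $\ell_\infty$ case and $E_j=E_j^{(1)}\oplus_1\varnothing$ in the $\ell_1$ case (vertices of $E_j$ must have $v_j=1$, which rules out vertices coming from $H_2$ in the $\ell_1$ case). Applying the recursion formulas \eqref{def_AF2}, \eqref{def_AF1} together with $\spn(H_2)=\R^{n_2}$ gives $A\sub{E_j}=A\sub{E_j^{(1)}}$ in the $\ell_\infty$ case and $A\sub{E_j}=A\sub{E_j^{(1)}}+\R^{n_2}$ in the $\ell_1$ case. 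The inductive hypothesis provides $A\sub{E_j^{(1)}}=e_j+\spn\{e_i:i\sim_{H_1}j\}$, and combining with the graph decomposition above yields $A\sub{E_j}=e_j+\spn\{e_i:i\sim_H j\}$ in both cases. The main obstacle I foresee is the combinatorial bookkeeping in the case split — making sure the $\ell_1/\ell_\infty$ duality between ``join'' and ``disjoint union'' of graphs lines up correctly with the branching of Definition \ref{def:AF} between appending $\spn(H_2)$ or not; once this correspondence is in place, the remaining induction is routine.
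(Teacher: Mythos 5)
Your proposal is correct. For the second identity \eqref{eq:A_Ej} your argument is essentially the paper's: induct on the $\ell_1/\ell_\infty$ decomposition $H=H_1\,\square\,H_2$, identify $E_j$ as $E_j^{(1)}\oplus_1\varnothing$ or $E_j^{(1)}\oplus H_2$, and apply the recursion of Definition \ref{def:AF} together with the inductive hypothesis; the only difference is that you make explicit the graph-theoretic bookkeeping (the $\ell_\infty$-sum is the disjoint union of the graphs, the $\ell_1$-sum is the join), which the paper uses implicitly in its final line, and you identify the face decomposition of $E_j$ via vertices rather than via the affine-hull comparison with Lemma \ref{lem:sum_faces}. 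For the first identity \eqref{eq:aff_Ej} your route is genuinely different: the paper sandwiches $E_j$ between $\conv\set{\pm e_i+e_j\in H:i\nsim j}$ and an affine subspace, proving the upper inclusion by projecting onto the two-dimensional coordinate planes $\spn\set{e_i,e_j}$ with $i\sim j$ and using that these sections of $H$ are $\ell_1$-balls, whereas you read everything off from the maximal-independent-set description of the vertices of $H$. Both are sound; the paper's 2D-section argument is self-contained within the geometry of $H$, while yours leans on the combinatorial vertex characterization (and on unconditionality of standard Hanner polytopes for the centroid claim, which the paper's proof does not address at all), and has the mild advantage of dovetailing with the graph formalism you then use in the induction.
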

\begin{proof}
The equality \eqref{eq:aff_Ej} can be obtained from $$\conv\set{\pm e_i+e_j\in H:i\nsim j}\subset E_j\subset\aff\set{\pm e_i+e_j\in H:i\nsim j}.$$ Here, the first inclusion follows directly from definitions of $E_j$ and $i\nsim j$. We can get the second inclusion by showing that $E_j\subset e_i^\perp$ whenever $i\sim j$. Indeed, if $x\in E_j$, then the orthogonal projection of $x$ to $\spn\set{e_i,e_j}$ is $\brA{x,e_i}e_i+\brA{x,e_j}e_j\in H$. If $i\sim j$, then $\abs{\brA{x,e_i}}+\abs{\brA{x,e_j}}\le 1$ because the section of $H$ by $\spn\set{e_i,e_j}$ is the $2$-dimensional $\ell_1$-ball. Since $\brA{x,e_j}=1$, we get $\brA{x,e_i}=0$.

To prove \eqref{eq:A_Ej}, we use the induction on the dimension $n=\dim H$. If $n=1$, then \eqref{eq:A_Ej} is trivial. Now assume that \eqref{eq:A_Ej} is true for all standard Hanner polytopes of dimension less than $n$. Let $H\in\R^n$ be the $\ell_1$ or $\ell_\infty$ sum of two standard Hanner polytopes $H_1$ and $H_2$. Without loss of generality, we fix $j=1$ and assume $e_1\in \spn(H_1)$. Consider $F=H\cap(e_1+e_1^\perp)$ and $F_1=H_1\cap(e_1+e_1^\perp)$. From \eqref{eq:aff_Ej}, we get $\aff(F)=e_1+\spn\set{e_i\in H:i\nsim 1}$ and $\aff(F_1)=e_1+\spn\set{e_i\in H_1:i\nsim 1}$. So, if $H=H_1\oplus_1H_2$, then we get $\aff(F)=\aff(F_1)$, which implies $F=F_1\oplus_1\varnothing$ by Lemma \ref{lem:sum_faces}. If $H=H_1\oplus_\infty H_2$, then we get $\aff(F)=\aff(F_1)+\spn(H_2)$, which implies $F=F_1\oplus H_2$ by Lemma \ref{lem:sum_faces}. It follows from Definition \ref{def:AF} that
\begin{equation*}
A\sub{F}=
\begin{cases}
A\sub{F_1}+\spn(H_2), &\text{if } H=H_1\oplus_1H_2,\\
A\sub{F_1}, &\text{if } H=H_1\oplus_\infty H_2.
\end{cases}
\end{equation*}
Since $A\sub{F_1}=e_1+\spn\set{e_i\in H_1:i\sim1}$ by the induction hypothesis,  in any case we have $A\sub{F}=e_1+\spn\set{e_i\in H:i\sim1}$.
\end{proof}

\begin{proposition}\label{prop:step1}
Let $H$ be a standard Hanner polytope in $\R^n$, and $K_0$ be a symmetric convex body in $\R^n$ with $\dH(K_0,H)=\de$ for small $\de>0$. Consider the polytope $P_0$ defined by the convex hull of all the $x\sub{F}$-points of $K_0$. Then, there exists a polytope $K$ with $\dBM(K,P_0)=1+O(\de^2)$ such that $$\dH(K,H)=O(\de)\quad\text{and}\quad B_1^n\subset K\subset B_\infty^n.$$
\end{proposition}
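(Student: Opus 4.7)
The plan is to construct $K$ as (a small modification of) a linear image $TP_0$ of $P_0$. The map $T\in{\rm GL}(n)$ will send canonical vertices of $P_0$ to standard basis vectors, ensuring $B_1^n\subset TP_0$; a second-order analysis then controls $TP_0\subset B_\infty^n$, and $K$ is obtained by intersecting $TP_0$ with $B_\infty^n$ if needed. Since any linear image of $P_0$ has $\dBM=1$, the BM-distance bound $1+O(\de^2)$ will follow once we know the intersection trims only $O(\de^2)$-thin slivers.

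The canonical vertices come from Lemma \ref{lem:Ej}: each face $E_j=H\cap(e_j+e_j^\perp)$ has centroid $e_j$ and affine subspace $A\sub{E_j}=e_j+\spn\set{e_i:i\sim j}$. By Proposition \ref{prop:construction}, the vertex $x\sub{E_j}$ of $P_0$ is $O(\de)$-close to $e_j$ and, since it lies in $t\sub{E_j}A\sub{E_j}$, has support contained in $\set{j}\cup\set{i:i\sim j}$; dually, $x\sub{E_j^*}\in\partial K_0^\circ$ has support contained in $\set{j}\cup\set{i:i\nsim j,\,i\ne j}$ and is $O(\de)$-close to $e_j$. I define $T$ by the $n$ equations $Tx\sub{E_j}=e_j$; this gives $T=I+O(\de)$, and central symmetry of $K_0$ yields $\pm e_j=T(\pm x\sub{E_j})\in TP_0$, hence $B_1^n\subset TP_0$. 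The Hausdorff bound $\dH(TP_0,H)=O(\de)$ then follows from the triangle inequality together with $\dH(P_0,H)=O(\de)$.

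The main obstacle is ensuring $TP_0\subset B_\infty^n$ with only $O(\de^2)$ slack, which is equivalent to requiring $T^Te_j\in K_0^\circ$ up to a second-order perturbation. A direct first-order expansion shows that $T^Te_j$ is supported on $\set{j}\cup\set{i:i\sim j}$---the \emph{opposite} support pattern from the natural polar point $x\sub{E_j^*}$---so the two conditions $Tx\sub{E_j}=e_j$ and $T^Te_j=x\sub{E_j^*}$ cannot both be enforced exactly: simultaneous enforcement would require the pairing $\brA{x\sub{E_i^*},x\sub{E_j}}$ to equal $1$ when $i=j$ and $0$ when $i\ne j$, whereas the off-diagonal terms are only $O(\de)$, not zero. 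I resolve this by accepting the mismatch and setting $K:=TP_0\cap B_\infty^n$, which trivially gives $B_1^n\subset K\subset B_\infty^n$ and preserves $\dH(K,H)=O(\de)$. The technical crux is then to show the trimmed slivers $TP_0\setminus B_\infty^n$ have thickness $O(\de^2)$: near each vertex $e_j$ of $TP_0$, the tangent hyperplane (inherited from $TA\sub{E_j}$) should deviate from the facet $\set{x:x_j=1}$ of $B_\infty^n$ only at quadratic order. I expect this cancellation to come from Proposition \ref{prop:construction}'s polarity identity $\brA{x\sub{E_j},x\sub{E_j^*}}=1$ combined with the support-orthogonality of $x\sub{E_j}$ and $x\sub{E_j^*}$ dictated by the Hanner graph $G$ (essentially the CL-property) and the central symmetry of $K_0$, which together should force all first-order terms in the expansion of the support function $h_{TP_0}(e_j)-1$ to vanish. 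Making this second-order cancellation rigorous uniformly across all Hanner graphs is what I anticipate to be the delicate part of the proof.
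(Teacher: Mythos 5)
There is a genuine gap, and in fact the approach as stated fails at first order. You normalize by $Tx\sub{E_j}=e_j$ and then hope that the slivers $TP_0\setminus B_\infty^n$ have thickness $O(\de^2)$; you explicitly defer the proof of this cancellation, but no such cancellation occurs. The supporting hyperplane of $P_0$ near $e_j$ is tilted by $O(\de)$ in the directions $e_i$ with $i\nsim j$, and the points $x\sub{F}$ for faces $F\subsetneq E_j$ exploit this tilt at first order; pinning $T$ down by the contact points $x\sub{E_j}$ alone cannot see the tilt. Concretely, take $n=2$, $H=B_\infty^2$, and $K_0=\conv\set{\pm(1,-1),\pm(1+\de)(1,1)}$. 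Then $A\sub{E_1}=\set{e_1}$, so $x\sub{E_1}=t_1e_1$ with $t_1=1+\de/(2+\de)=1+\de/2+O(\de^2)$, and likewise $x\sub{E_2}=t_2e_2$ with $t_2=t_1$; your $T$ is the diagonal map with entries $1/t_1,1/t_2$. But for the vertex $F=\set{(1,1)}$ one has $A\sub{F}=e_1+e_2+\spn\set{e_1-e_2}$, hence $x\sub{F}=(1+\de)(1,1)$ and $Tx\sub{F}=(1+\de/2+O(\de^2))(1,1)$, which protrudes from $B_\infty^2$ by $\de/2$ up to $O(\de^2)$. Intersecting with $B_\infty^2$ then removes a cap of first-order thickness, so $\dBM(K,P_0)=1+\Omega(\de)$, violating the required bound $1+O(\de^2)$.

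The paper avoids this by normalizing with the tangent-hyperplane data rather than the contact points: it chooses directions $\te_j$ (built from a tangent hyperplane $\varphi_j+\varphi_j^\perp$ at $x\sub{E_j}$ parallel to $A\sub{E_j}$, made orthogonal both to $\spn\set{x_i:i\sim j}$ and to $\varphi_j^\perp\cap\spn(\set{e_j}\cup\set{e_i:i\nsim j})$) and proves that $P_0$ lies between $\pm(1+O(\de^2))(x_j+\te_j^\perp)$; the first linear map $T_1$ sends these \emph{tilted} slabs to the coordinate slabs, which is exactly what accommodates points like $x\sub{F}=(1+\de)(1,1)$ above. The price is that the inclusion $B_1^n\subset T_1P_0$, which your normalization gets for free, is no longer automatic: the paper recovers points $r_je_j\in T_1P_0$ with $r_j=1-O(\de^2)$ by a separate convex-combination argument (using auxiliary points $z$ built from vertices $e_1\pm e_j\in H$) and then applies a second correction $T_2=Id+O(\de^2)$. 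If you want to salvage your plan, you must replace the condition $Tx\sub{E_j}=e_j$ by a condition on the supporting hyperplanes (equivalently, allow $T$ the $O(\de)$ off-diagonal entries in the coordinates $i\nsim j$ that the tangency data dictates) and then supply the $B_1^n$-containment argument separately.
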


\begin{proof}
Let $j\in\set{1,\ldots,n}$. Consider $E_j=H\cap(e_j+e_j^\perp)$, which is a face of $H$ with centroid $e_j$, and let $x_j=x\sub{E_j}$ be the $x\sub{F}$-point for $F=E_j$. Then, $\abs{x_j-e_j}=O(\de)$ by Proposition \ref{prop:construction}. Choose a hyperplane $\varphi_j+\varphi_j^\perp$ tangent to $K_0$ at $x_j$ and parallel to $A\sub{E_j}$.
First, we will prove
\begin{equation}\label{eq:hyperplane_delta}
\abs{\varphi_j-e_j}=O(\de).
\end{equation}
Indeed, if $i\sim j$, then $e_i$ is parallel to $A\sub{E_j}$ by \eqref{eq:A_Ej}, which implies
\begin{equation}\label{eq:ei_xij}
\brA{e_i,\varphi_j}=0\quad\text{whenever }i\sim j.
\end{equation}
If $i\nsim j$, then $e_i+e_j\in H$ gives $t^\pm(e_j\pm e_i)\in K_0$ for some $t^\pm=1+O(\de)$. So, the points $e_j\pm e_i$ are bounded by the hyperplanes $\pm(1+O(\de))(\varphi_j+\varphi_j^\perp)$. In addition, note that $\brA{x_j,\varphi_j}=\brA{\varphi_j,\varphi_j}$, $\abs{x_j-e_j}=O(\de)$, and $c_1\le\abs{\varphi_j}\le c_2$ for constants $c_1$, $c_2$ (by $\varphi_j/\abs{\varphi_j}^2\in\partial K_0^\circ$). Thus, if $i\nsim j$, then
\begin{align}
\brA{\pm e_i,\varphi_j}&=\brA{e_j\pm e_i,\varphi_j}-\brA{x_j,\varphi_j}+\brA{x_j-e_j,\varphi_j}\notag \\
&= \brA{e_j\pm e_i,\varphi_j}-\brA{\varphi_j,\varphi_j}+O(\de)\le O(\de). \label{eq:hyperplane_ij}
\end{align}
To estimate $\brA{e_j,\varphi_j}$, note that 
\begin{equation*}
\brA{\varphi_j,\varphi_j}=\sum_{i=1}^n\brA{e_i,\varphi_j}^2=\brA{e_j,\varphi_j}^2+\sum_{i\nsim j}\brA{e_i,\varphi_j}^2=\brA{e_j,\varphi_j}^2+O(\de^2)
\end{equation*}
is equal to $\brA{x_j,\varphi_j}=\brA{e_j,\varphi_j}+O(\de)$ because $x_j\in \varphi_j+\varphi_j^\perp$. It implies $\brA{e_j,\varphi_j}=1+O(\de)$, which, together with \eqref{eq:ei_xij} and \eqref{eq:hyperplane_ij}, completes the proof of \eqref{eq:hyperplane_delta}.

Choose $\te_j\in\Sp$ which is orthogonal to  
\begin{equation}\label{eq:theta_j}
\spn\Big(\!\set{e_j}\cup\set{e_i:i\nsim j}\!\Big)\cap\varphi_j^\perp+\spn\Set{x_i:i\sim j},
\end{equation}
and satisfies $\brA{e_j,\te_j}\ge0$.
Here, the dimension of the right hand side of \eqref{eq:theta_j} is at most $n-1$ because $\varphi_j\in\spn(\set{e_j}\cup\set{e_i:i\nsim j})$ by \eqref{eq:ei_xij}, so such $\te_j$ exists. Moreover, 
\begin{equation}\label{eq:theta_j_ej}
\abs{\te_j-e_j}=O(\de).
\end{equation}
Indeed, if $i\sim j$, then $\brA{x_i,\te_j}=0$, which implies $\brA{e_i,\te_j}=O(\de)$. Assume $i\nsim j$. Let $P_{\varphi_j^\perp}(e_i)$ be the orthogonal projection of $e_i$ onto $\varphi_j^\perp$. Then,
$P_{\varphi_j^\perp}(e_i)-e_i$ is parallel to $\varphi_j$, i.e., $P_{\varphi_j^\perp}(e_i)-e_i=\al\varphi_j$ where $\al=-\brA{e_i,\varphi_j}/\abs{\varphi_j}^2$ is obtained from $\langle P_{\varphi_j^\perp}(e_i),\varphi_j\rangle=0$. So, $|P_{\varphi_j^\perp}(e_i)-e_i|=\abs{\al\varphi_j}=\abs{\brA{e_i,\varphi_j}}/\abs{\varphi_j}=O(\de)$. Moreover, \eqref{eq:ei_xij} implies $P_{\varphi_j^\perp}(e_i)=e_i+\al\varphi_j\in\varphi_j^\perp\cap\spn(\set{e_j}\cup\set{e_k:k\nsim j})$. Thus, we have $\brA{P_{\varphi_j^\perp}(e_i),\te_j}=0$, which implies $\brA{e_i,\te_j}=O(\de)$. To estimate $\brA{e_j,\te_j}$, note that $|\te_j|^2=\sum\brA{e_i,\te_j}^2=\brA{e_j,\te_j}^2+O(\de^2)$. From $|\te_j|=1$ and $\brA{e_j,\te_j}>0$, we get $\brA{e_j,\te_j}=1+O(\de^2)$, which completes the proof of \eqref{eq:theta_j_ej}.

Consider the hyperplane $x_j+\te_j^\perp$. We will prove that $P_0=\conv\set{x\sub{F}}$ is bounded by hyperplanes $\pm(1+O(\de^2))(x_j+\te_j^\perp)$. Note that a face $F$ is contained in $E_j$ if and only if $c\sub{F}\in e_j+e_j^\perp$. Indeed, since $c\sub{F}\in{\rm int}(F)$, it can be written as $c\sub{F}=\sum_v\la_v v$ where $v$ runs over all vertices of $H$ contained in $F$, and all $\la_v$'s are positive numbers satisfying $\sum_v \la_v=1$. It implies that $\brA{c\sub{F},e_j}=1$ if and only if $\brA{v,e_j}=1$ for each vertex $v$ of $H$ in $F$, which is equivalent to $F\subset E_j$. Thus, the point $x\sub{F}$ is $O(\de)$-close to $x_j+\te_j^\perp$ if and only if $F\subset E_j$. It suffices to prove $\brA{x\sub{F},\te_j}\le \brA{x_j,\te_j}+O(\de^2)$ whenever $F\subset E_j$. For any face $F\subset E_j$,
\begin{align}
\brA{x\sub{F},\theta_j}-\brA{x_j,\theta_j} &=\brA{x\sub{F}-x_j,\varphi_j} + \brA{x\sub{F}-x_j,\theta_j-\varphi_j}\notag\\
&=\brA{x\sub{F}-x_j,\varphi_j} + \brA{sc\sub{F}-\varphi_j,\te_j-\varphi_j} + \brA{x\sub{F}-sc\sub{F},\te_j-\varphi_j}\label{eq:Aj_delta_sq}
\end{align}
where $s=\abs{\varphi_j}^2/\brA{c\sub{F},\varphi_j}=1+O(\de)$.
The first term in \eqref{eq:Aj_delta_sq} satisfies $\brA{x\sub{F}-x_j,\varphi_j}\le0$ because $\varphi_j+\varphi_j^\perp$ is tangent to $K_0$ at $x_j$. For the second term in \eqref{eq:Aj_delta_sq}, note that $sc\sub{F}-\varphi_j \in \varphi_j^\perp$ by choice of $s$. Moreover, since $\brA{e_i,\varphi_j}=0=\brA{e_i,c\sub{F}}$ for each $i\sim j$ by \eqref{eq:ei_xij} and \eqref{eq:aff_Ej}, the point $sc\sub{F}-\varphi_j$ belongs to $\varphi_j^\perp\cap\spn(\set{e_j}\cup\set{e_i:i\nsim j})$, which gives $sc\sub{F}-\varphi_j\in\te_j^\perp$. Thus, $\brA{sc\sub{F}-\varphi_j,\te_j-\varphi_j}=0$. The last term in \eqref{eq:Aj_delta_sq} satisfies  $\brA{x\sub{F}-sc\sub{F},\te_j-\varphi_j}\le \abs{x\sub{F}-sc\sub{F}}\cdot\abs{\te_j-\varphi_j}=O(\de^2)$. Finally, $\brA{x\sub{F},\te_j}\le \brA{x_j,\te_j}+O(\de^2)$.

Take a linear transformation $T_1$ which maps $x_j+\te_j^\perp$ to $e_j+e_j^\perp$ for each $j=1,\ldots,n$. Indeed, to see the existence of such a linear transformation, consider the parallelepiped $Q$ bounded by the hyperplanes $\pm x_j+\te_j^\perp$ for $j=1,\ldots,n$. If $T_1$ is the linear transformation which maps the centroid of the facet $Q\cap(x_j+\te_j^\perp)$ of $Q$ to $e_j$ for $j=1,\ldots,n$, then it is a desired linear transformation. Here, both $x_j$ and $\te_j$ are $O(\de)$-close to $e_j$, so is the centroid of $Q\cap(x_j+\te_j^\perp)$. Thus, $\norm{T_1-Id}=O(\de)$ where $Id$ is the identity operator on $\R^n$. Let $P_1=T_1P_0$, and $x_j'=T_1x_j$ for each $j$. Then, $\dH(P_1,P_0)=O(\de)$. Also, $\dH(P_1,P_1\cap B_\infty^n)=O(\de^2)$ because $P_0$ is bounded by hyperplanes $\pm(1+O(\de^2))(x_j+\te_j^\perp)$ for each $j$. Moreover, if $i\sim j$, then $x_i$ is parallel to $x_j+\te_j^\perp$ by \eqref{eq:theta_j}. So, $x_i'$ is parallel to $e_j+e_j^\perp$ whenever $i\sim j$, that is,
\begin{equation}\label{eq_perturb}
\brA{x_i',e_j}=0\quad\text{whenever $i\sim j$}.
\end{equation}
Let $(1-c\de)H\subset P_1\subset (1+c\de)H$ for some constant $c>0$. By \eqref{eq_perturb} we can write $$x_1'=e_1+\sum_{j\in J}\la_je_j$$ where $J=\set{j: j\nsim1}$ and $\la_j=O(\de)$. Consider the point $$z=\frac{1-c\de}{\la}\sum_{j\in J}\abs{\la_j}\big(e_1-{\rm sign}(\la_j)\,e_j\big),$$ where $\la=\sum|\la_j|=O(\de)$. Since $e_1\pm e_j\in H$ for each $j\in J$, we get $\frac1{\la}\sum_{j\in J}\abs{\la_j}(e_1-{\rm sign}(\la_j)e_j)\in H$. So, $z\in P_1$. Moreover, consider the point 
\begin{equation*}
\frac{x_1'+\frac{\la}{1-c\de}z}{1+\frac{\la}{1-c\de}}=\frac{1+\la}{1+\frac{\la}{1-c\de}}e_1=:r_1e_1,
\end{equation*}
where $r_1=1-c\de\la+O(\de^2)=1+O(\de^2)$. Then, $r_1e_1\in P_1$ because it is a convex combination of $x_1'\in P_1$ and $z\in P_1$. Repeat the above procedure to get $r_2e_2,\ldots,r_ne_n\in P_1$ for $j=2,\ldots,n$ where $r_j=1-O(\de^2)$ for each $j$. 

Take a linear transformation $T_2$ which maps $r_je_j$ to $e_j$ for each $j$. Then $\norm{T_2-Id}=O(\de^2)$ because $r_j=1-O(\de^2)$. Let $T=T_2T_1$ and $K=TP_0\cap B_\infty^n$. Then, $\dBM(K,P_0)=1+O(\de^2)$, $\dH(K,H)=O(\de)$ and $B_1^n\subset K\subset B_\infty^n$.
\end{proof}

For each vertex $v$ of a standard Hanner polytope in $\R^n$, consider the subspace $\R^{(v)}$, the {\it lower dimensional cube} $B_\infty^{(v)}$, and the {\it lower dimensional cross-polytope} $B_1^{(v)}$, defined by $$\R^{(v)}=\spn\set{e_j:j\in\supp(v)},\qquad B_\infty^{(v)}=B_\infty^n\cap\R^{(v)},\qquad B_1^{(v)}=B_1^n\cap\R^{(v)}.$$  Then, we can see that
\begin{align*}
B_\infty^{(v)}\,&=H\cap\R^{(v)}\,=H|\R^{(v)},\quad\text{for every vertex $v$ of $H$,}\\
B_1^{(v^{\!\star}\!)}&=H\cap\R^{(v^{\!\star}\!)}=H|\R^{(v^{\!\star}\!)}\!, \quad\text{for every vertex $v^\star$ of $H^\circ$,}
\end{align*}
where $H|\R^{(v)}$ denotes the orthogonal projection of $H$ to $\R^{(v)}$.
\begin{proposition}\label{prop:step2}
Let $H$ be a standard Hanner polytope in $\R^n$ and $K$ a symmetric convex body in $\R^n$ with $\dH(K,H)=\de$ for small $\de>0$. Then, there exists either a vertex $v$ of $H$ with $\dH\brR{K|\R^{(v)},B_\infty^{(v)}}\ge c\de$, or a vertex $v^\star$ of $H^\circ$ with $\dH\brR{K^\circ|\R^{(v^{\!\star}\!)},B_\infty^{(v^{\!\star}\!)}}\ge c\de$, where $c=c(n)$ is a positive constant.
\end{proposition}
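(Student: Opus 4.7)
I argue by contrapositive: assume that, for a constant $c=c(n)>0$ to be chosen sufficiently small,
\begin{equation*}
\dH\brR{K|\R^{(v)},B_\infty^{(v)}}<c\de\quad\text{for every vertex $v$ of $H$,}
\end{equation*}
and
\begin{equation*}
\dH\brR{K^\circ|\R^{(v^\star)},B_\infty^{(v^\star)}}<c\de\quad\text{for every vertex $v^\star$ of $H^\circ$,}
\end{equation*}
and I aim to deduce $\dH(K,H)<\de$, contradicting the assumption. Since $H$ is a symmetric polytope, $\dH(K,H)<\de$ is equivalent, up to a constant depending on $H$ only, to the two containments $K\subset(1+c_0\de)H$ and $K\supset(1-c_0\de)H$. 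The first fails iff $h_K(v^\star)>1+c_0\de$ for some vertex $v^\star$ of $H^\circ$, and the second fails iff $h_{K^\circ}(v)>1+c_0\de$ for some vertex $v$ of $H$. By polar duality $K\leftrightarrow K^\circ$, $H\leftrightarrow H^\circ$ — which swaps the two hypotheses of the proposition — the second case reduces to the first, so without loss of generality there exist a vertex $v^\star$ of $H^\circ$ and a point $x_0\in K$ with $\brA{x_0,v^\star}\ge 1+c_0\de$.

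Re-expressing the second hypothesis via the polarity $(K^\circ|E)^\circ_E=K\cap E$, it is equivalent to $\dH(K\cap\R^{(v^\star)},B_1^{(v^\star)})<c'\de$. Since $h_{B_1^{(v^\star)}}(v^\star)=\max_{j\in\supp(v^\star)}|v^\star_j|=1$, this yields $\sup_{x\in K\cap\R^{(v^\star)}}\brA{x,v^\star}\le 1+O(c\de)$. The crux is therefore to produce a point $x_1\in K\cap\R^{(v^\star)}$ with $\brA{x_1,v^\star}\ge 1+c_1\de$ for some $c_1\gg c$, which will contradict the bound above. A natural candidate is $x_1=P_{\R^{(v^\star)}}x_0$, since $v^\star\in\R^{(v^\star)}$ gives $\brA{x_1,v^\star}=\brA{x_0,v^\star}\ge 1+c_0\de$; what remains is to argue that $x_1$ actually lies in $K$ (or within $O(c\de)$ of $K\cap\R^{(v^\star)}$).

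To achieve this, I exploit the sign-flip symmetries of $H$ in combination with the CL property and the first hypothesis. Let $C=\supp(v^\star)$, and for $S\subset\set{1,\ldots,n}\setminus C$ let $\tau_S$ denote the reflection $e_j\mapsto-e_j$ for $j\in S$. Each $\tau_S$ preserves $H$ and fixes $v^\star$, and the average $2^{-(n-|C|)}\sum_S\tau_S(x_0)$ equals $P_{\R^{(v^\star)}}x_0=x_1$ with $\brA{x_1,v^\star}=\brA{x_0,v^\star}$. If one could show $\dH(\tau_S(K),K)=O(c\de)$ for every such $S$, then each $\tau_S(x_0)$ lies within $O(c\de)$ of $K$, and so does the convex combination $x_1$; since $x_1\in\R^{(v^\star)}$, it then lies within $O(c\de)$ of $K\cap\R^{(v^\star)}$, contradicting the bound on $\brA{\cdot,v^\star}$ over $K\cap\R^{(v^\star)}$ once $c\ll c_0$. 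The required symmetry $\dH(\tau_S(K),K)=O(c\de)$ is supplied, morally, by the first hypothesis: the cubes $B_\infty^{(v)}$ are themselves $\tau_S$-invariant, and the CL property — every vertex $v$ of $H$ satisfies $|\brA{v,v^\star}|=1$, equivalently every maximal independent set meets $C$ in exactly one coordinate — ensures that every index $j\notin C$ lies in some $\supp(v)$ for which the projection hypothesis $K|\R^{(v)}\approx B_\infty^{(v)}$ transfers an approximate $\tau_j$-symmetry to $K$.

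The main obstacle is making this symmetry-transfer step quantitative: a direct triangle-inequality argument only gives $\dH(\tau_S(K),K)=O(\de)$, which is too weak. To improve it to $O(c\de)$, one must combine the projection hypotheses on all relevant subspaces $\R^{(v)}$ simultaneously, using the rigid combinatorial structure of the Hanner graph (its cograph / no-induced-$P_4$ property) together with CL to produce the necessary approximate reflection symmetries of $K$ without accumulating dimension-dependent losses. An alternative attack by induction on $\dim H$ via the $\ell_1/\ell_\infty$ sum decomposition (with trivial base case $n=1$, where the first hypothesis directly yields $\dH(K,H)<c\de$) is also appealing and leverages the face structure of Lemmas \ref{lem:sum_faces}--\ref{lem:sum_dual_face}, but its inductive step appears to face the same fundamental difficulty of comparing sections and projections coherently across the two summand subspaces.
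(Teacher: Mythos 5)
Your reduction of the problem to producing a point of $K\cap\R^{(v^\star)}$ whose inner product with $v^\star$ exceeds $1+c_1\de$ is sound, and it is indeed where the paper's argument also heads; but the mechanism you propose for getting there --- an approximate reflection symmetry $\dH(\tau_S(K),K)=O(c\de)$ for all sign flips $\tau_S$ supported off $\supp(v^\star)$ --- is left unproven, and you acknowledge as much. This is not a technical loose end: it is essentially the entire content of the proposition. The hypotheses control only the projections $K|\R^{(v)}$ (onto spans of maximal independent sets) and the sections $K\cap\R^{(v^\star)}$ (by spans of maximal cliques), and no soft argument turns these shadows into a symmetry of $K$ itself to precision $o(\de)$. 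Indeed, $\dH(\tau_S K,K)=o(\de)$ for all $S$ is essentially as strong as the conclusion $\dH(K,H)=o(\de)$ that you are trying to contradict (it would follow from it by the triangle inequality, since $H$ is unconditional), so the plan is circular in spirit precisely at its load-bearing step.

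What the paper does instead is strictly pointwise and never symmetrizes $K$. It locates a tangent point $x_v$ with an off-support coordinate $\abs{\brA{x_v,e_1}}\ge c\de$, and proves a coordinate-transfer statement (Lemma \ref{lem:step2}): if two vertices $v,w$ of $H$ share a support, $1\notin\supp(v)$, and they agree on all coordinates $i\nsim1$, then $\brA{x_v,e_1}=\brA{x_w,e_1}+o(\de)$. The proof of that lemma is the real work --- it builds $z=\sqrt{\de}\,y+(1-\sqrt{\de})\bar x$ from points $(1-\de)(e_{j_k}-e_k)$ and $(1-\de)\tfrac{e_{j_k}+e_n}{2}$ chosen via a maximal clique, lands $z$ in $K\cap\R^{(v^\star)}$, and reads the coordinate bound off the section hypothesis; the $\sqrt{\de}$ weighting is what avoids the $O(\de)$ loss you correctly identify as fatal. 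With that lemma, averaging the $2m$ tangent points $x_{w_k^\pm}$ over a clique produces a point $y\in K$ that is $O(\de)$-close to the \emph{interior} point $\tfrac1m v^\star$ of the facet $(v^\star)^*$ and satisfies $\brA{y,v^\star}\ge1+\tfrac cm\de$; interiority is what allows a final convex combination to push $y$ into $\R^{(v^\star)}$ with only $O(\de^2)$ loss. Your candidate $x_1=P_{\R^{(v^\star)}}x_0$ has no such interiority, so even granting the symmetry step, passing from ``$x_1$ is $O(c\de)$-near $K$ and lies in $\R^{(v^\star)}$'' to ``$x_1$ is $O(c\de)$-near $K\cap\R^{(v^\star)}$'' would need a further argument you do not supply. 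In short, the skeleton is compatible with the paper's, but both load-bearing steps (the quantitative coordinate-transfer estimate and the interior-point trick) are missing.
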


For the proof by contradiction, we assume that
\begin{align}
\dH\brR{K\,|\,\R^{(v)},B_\infty^{(v)}}&=o(\de)\quad\text{for every vertex $v$ of $H$},\label{dist_projection}\\
\dH\brR{K\cap\R^{(v^{\!\star}\!)},B_1^{(v^{\!\star}\!)}}&=o(\de)\quad\text{for every vertex $v^\star$ of $H^\circ$.} \label{dist_section}
\end{align}
Here, the equality in \eqref{dist_section} is equivalent to $\dH\brR{K^\circ|\R^{(v^{\!\star}\!)},B_\infty^{(v^{\!\star}\!)}}=o(\de)$ because the polar of $K\cap\R^{(v^{\!\star}\!)}$ in $\R^{(v^{\!\star}\!)}$ is the same as $K^\circ|\R^{(v^{\!\star}\!)}$.
We need the following two lemmas to prove Proposition \ref{prop:step2}.
\begin{lemma}\label{lem:AF_vertex}
Let $v$ be a vertex of a standard Hanner polytope $H$ in $\R^n$, and $A_v$ the affine subspace for the (zero-dimensional) face $v$ defined in Definition \ref{def:AF}. Then,
$$A_v\cap\R^{(v)}=A_v |\,\R^{(v)}.$$  
\end{lemma}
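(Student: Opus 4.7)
The plan is to induct on $n = \dim H$. When $n=1$, $H$ is a symmetric interval, the two vertices satisfy $A_v = \set{v}$ by Definition \ref{def:AF}, and $\R^{(v)} = \R$, so both $A_v \cap \R^{(v)}$ and $A_v \mid \R^{(v)}$ equal $\set{v}$. For the inductive step, write $H = H_1 \oplus_p H_2$ with $p \in \set{1,\infty}$ and lower-dimensional standard Hanner polytopes $H_j \subset \R^{n_j}$, under the orthogonal decomposition $\R^n = \R^{n_1} \oplus \R^{n_2}$. I will repeatedly use the crucial fact that any vertex $v$ of a standard Hanner polytope has all coordinates in $\set{-1,0,1}$, so $v = \sum_{j \in \supp(v)}\brA{v,e_j}\,e_j \in \R^{(v)}$; in particular, the vertex-summands $v_j$ appearing below satisfy $v_j \in \R^{(v_j)}$.

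In the $\ell_1$ case, by Lemma \ref{lem:sum_faces} I may assume (after possibly swapping the summands) that $v = v_1 \oplus_1 \varnothing$ for a vertex $v_1$ of $H_1$. Then $\supp(v) = \supp(v_1)$ and $\R^{(v)} = \R^{(v_1)} \subset \R^{n_1}$, and Definition \ref{def:AF} gives $A_v = A_{v_1} + \spn(H_2) = A_{v_1} + \R^{n_2}$. Since $\R^{n_2}$ is orthogonal to $\R^{(v)}$, its contribution is annihilated by projection onto $\R^{(v)}$, yielding $A_v \mid \R^{(v)} = A_{v_1} \mid \R^{(v_1)}$. Conversely, any point of $A_v$ that lies in $\R^{(v)} \subset \R^{n_1}$ must have zero $\R^{n_2}$-component, giving $A_v \cap \R^{(v)} = A_{v_1} \cap \R^{(v_1)}$. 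The induction hypothesis applied to $v_1$ in $\R^{n_1}$ equates the two.

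In the $\ell_\infty$ case, Lemma \ref{lem:sum_faces} gives $v = v_1 \oplus v_2$ with each $v_j$ a vertex of $H_j$, so $\R^{(v)} = \R^{(v_1)} \oplus \R^{(v_2)}$ is an orthogonal decomposition. Using $c_{v_j} = v_j$ and $\dim H_j - \dim v_j = n_j$, Definition \ref{def:AF} gives
$$A_v = A_{v_1} + A_{v_2} + \spn\Set{\tfrac{v_1}{n_1} - \tfrac{v_2}{n_2}}.$$
The central observation is that $v_j/n_j \in \R^{(v_j)} \subset \R^{(v)}$, so the extra spanning direction $v_1/n_1 - v_2/n_2$ already lies in $\R^{(v)}$. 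Splitting a typical element of $A_v$ as $(a_1 + tv_1/n_1) + (a_2 - tv_2/n_2)$ with $a_j \in A_{v_j}$ and $t \in \R$, the condition that it lies in $\R^{(v)} = \R^{(v_1)} \oplus \R^{(v_2)}$ is equivalent, componentwise, to $a_j + (-1)^{j+1}tv_j/n_j \in \R^{(v_j)}$, which (using $v_j/n_j \in \R^{(v_j)}$) reduces to $a_j \in \R^{(v_j)}$ for $j = 1, 2$. This yields
$$A_v \cap \R^{(v)} = (A_{v_1} \cap \R^{(v_1)}) + (A_{v_2} \cap \R^{(v_2)}) + \spn\Set{\tfrac{v_1}{n_1} - \tfrac{v_2}{n_2}}.$$
Projecting $A_v$ onto $\R^{(v)}$ componentwise, and using both $\R^{(v_j)} = \R^{(v)} \cap \R^{n_j}$ (so that the projection $\R^{n_j} \to \R^{(v)}$ factors through $\R^{(v_j)}$) and the fact that the extra direction is fixed by this projection, gives the analogous identity
$$A_v \mid \R^{(v)} = (A_{v_1} \mid \R^{(v_1)}) + (A_{v_2} \mid \R^{(v_2)}) + \spn\Set{\tfrac{v_1}{n_1} - \tfrac{v_2}{n_2}}.$$
The induction hypothesis then matches these expressions term by term.

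The only delicate point is the verification that the extra direction $v_1/n_1 - v_2/n_2$ belongs to $\R^{(v)}$; this is the main technical obstacle one might worry about, but it is immediate from the $\set{-1,0,1}$-coordinate structure of vertices of standard Hanner polytopes. Once this is in place, the proof is a clean structural induction that mirrors the recursive definition of $A_F$.
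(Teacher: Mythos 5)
Your proof is correct and follows essentially the same route as the paper: induction on $\dim H$ mirroring the recursive definition of $A\sub{F}$, with the $\ell_1$ case reducing immediately to one summand and the $\ell_\infty$ case hinging on the observation that the extra direction $v_1/n_1-v_2/n_2$ already lies in $\R^{(v)}$. The only (immaterial) difference is that you compute both $A_v\cap\R^{(v)}$ and $A_v|\R^{(v)}$ exactly, whereas the paper only establishes the two inclusions needed and combines them with the trivial containment $A_v\cap\R^{(v)}\subset A_v|\R^{(v)}$.
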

\begin{proof}
We use the induction on $n=\dim H$. The statement is trivial if $n=1$. Assume that Lemma \ref{lem:AF_vertex} is true for all standard Hanner polytopes of dimension less than $n$. Let $H\subset\R^n=\R^{n_1}\oplus\R^{n_2}$ be the $\ell_1$ or $\ell_\infty$ sum of standard Hanner polytopes $H_1\subset\R^{n_1}$, $H_2\subset\R^{n_2}$. Let $v$ be a vertex of $H$. First, consider the case $H=H_1\oplus_1 H_2$. Then, it follows from Remark \ref{rmk:face_form} that $v=v_1\oplus_1\varnothing$ or $\varnothing\oplus_1v_2$ where $v_1$, $v_2$ are vertices of $H_1$, $H_2$ respectively. Say, if $v=v_1\oplus_1\varnothing$, then $\R^{(v)}=\R^{(v_1)}$, and Definition \ref{def:AF} gives $A_v=A_{v_1}+\spn(H_2)$. So, $A_v\cap\R^{(v)}=(A_{v_1}+\spn(H_2))\cap\R^{(v_1)}=A_{v_1}\cap\R^{(v_1)}$ is equal to $A_{v_1} |\,\R^{(v_1)}=A_v |\,\R^{(v)}$ by the induction hypothesis. Now, consider the case $H=H_1\oplus_\infty H_2$. In this case, $v$ can be written as $v=v_1\oplus v_2$ where $v_1$, $v_2$ are vertices of $H_1$, $H_2$ respectively. So,  $\R^{(v)}=\R^{(v_1)}+\R^{(v_2)}$ and $A_v=A_{v_1}+A_{v_2}+\spn\set{v_1/n_1-v_2/n_2}$. Since $v_1/n_1-v_2/n_2\in\R^{(v)}$, we get 
\begin{align*}
A_v\cap\R^{(v)} &\supset A_{v_1}\cap\R^{(v)}+A_{v_2}\cap\R^{(v)}+\spn\set{v_1/n_1-v_2/n_2}\\
&=A_{v_1}\cap\R^{(v_1)}+A_{v_2}\cap\R^{(v_2)}+\spn\set{v_1/n_1-v_2/n_2},
\end{align*}
and 
\begin{align*}
A_v|\,\R^{(v)}&\subset A_{v_1} |\,\R^{(v)} + A_{v_2} |\,\R^{(v)}+\spn\set{v_1/n_1-v_2/n_2}\\
&= A_{v_1} |\,\R^{(v_1)} + A_{v_2} |\,\R^{(v_2)}+\spn\set{v_1/n_1-v_2/n_2}
\end{align*}
Since $A_{v_1}\cap\R^{(v_1)}=A_{v_1} |\,\R^{(v_1)}$ and $A_{v_2}\cap\R^{(v_2)}=A_{v_2} |\,\R^{(v_2)}$ by the induction hypothesis, we have $A_v\cap\R^{(v)}\supset A_v |\,\R^{(v)}$. The opposite inclusion is trivial.
\end{proof}

Under the assumption \eqref{dist_projection}, Lemma \ref{lem:AF_vertex} implies that the orthogonal projection of the point $x_v$ (the $x\sub{F}$-point for $F=v$) to $\R^{(v)}$ is $o(\de)$-close to $v$. In other words, for any vertex $v$, the assumption \eqref{dist_projection} gives
\begin{equation}\label{vertex_projection}
\brA{x_v,e_j}=\brA{v,e_j}+o(\de)\qquad\text{for every $j\in\supp(v)$}.
\end{equation}
To prove \eqref{vertex_projection}, let $\tilde{x}_v$, $\tilde{A}_v$, $\tilde{K}$ be the orthogonal projections to $\R^{(v)}$ of $x_v$, $A_v$, $K$ respectively. Then $\tilde{A}_v=A_v\cap\R^{(v)}$ by Lemma \ref{lem:AF_vertex}. It gives that $t\tilde{A}_v$ is tangent to $\tilde{K}$ at $\tilde{x}_v$ whenever $tA_v$ is tangent to $K$ at $x_v$. Moreover, $\tilde{A}_v$ satisfies the conditions (a), (b) for $P=B_\infty^{(v)}$ given in Section \ref{general_construction}. By Proposition \ref{prop:construction}, the assumption \eqref{dist_projection} implies $|\tilde{x}_v-v|=o(\de)$, which complete the proof of \eqref{vertex_projection}.

The next lemma gives an estimate for the $j$-th coordinates of the $x_v$-points when $j\not\in\supp(v)$.

\begin{lemma}\label{lem:step2}
Let $H$ be a standard Hanner polytope in $\R^n$ and $K$ a symmetric convex body in $\R^n$ with $(1-\de)H\subset K\subset(1+\de)H$. Let $v$, $w$ be vertices of $H$ with $\supp(v)=\supp(w)$. Suppose that $1\notin\supp(v)$ and
\begin{equation}
\brA{v,e_i}=\brA{w,e_i}\quad\forall i\nsim1.
\end{equation}
Then, under the assumptions \eqref{dist_projection} and \eqref{dist_section}, we have $\brA{x_v,e_1}=\brA{x_w,e_1}+o(\de)$.
\end{lemma}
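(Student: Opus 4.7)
The plan is to exploit the reflection $\sigma_j\colon e_j\mapsto -e_j$ (fixing $e_1$ and all other basis vectors) together with the graph-theoretic structure of $H$. First I would reduce to the case $v-w=2\varepsilon e_j$ for a single $j\in T:=\{i\in S:i\sim 1\}$, taking $\varepsilon=1$. Since $v$ and $w$ differ only on indices of $T$, they can be connected by a chain of at most $|T|\le n$ vertices of $H$ all with support $S$, each obtained from the previous by flipping one sign at an index of $T$; consecutive pairs satisfy the hypothesis, and the differences telescope.

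Next I would establish, by induction on the Hanner decomposition of $H$, two structural facts valid whenever $1\notin\supp(v)$: (a) the hyperplane $A_v$ contains the entire line $v+\R e_1$; and (b) its normal $\xi_v:=c_{v^*}$ (well defined because $\dim v^*=n-1$ forces $A_{v^*}$ to be a single point, and then condition~(c) yields $A_v=\{y:\brA{\xi_v,y}=1\}$) lies in $\R^{(v)}=\spn\{e_i:i\in S\}$. For (a), at each step $H=H_1\oplus_p H_2$ one may assume $e_1\in\spn(H_1)$, and the formulas of Definition~\ref{def:AF} propagate the inclusion from the corresponding $H_1$-statement. For (b), one observes that $v^*$ is invariant under $\sigma_k$ for every $k\notin S$ (since $\sigma_k v=v$ and $\sigma_k H^\circ=H^\circ$), hence so is its centroid, forcing the $e_k$-coordinate of $\xi_v$ to vanish for $k\notin S$. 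In particular $\brA{\xi_v,e_1}=0$. Combining (b) with \eqref{dist_projection} sharpens the tangent value to
\[
t_v=\max_{x\in K}\brA{\xi_v,x}=\max_{y\in K|\R^{(v)}}\brA{\xi_v,y}=\brA{\xi_v,v}+o(\de)=1+o(\de),
\]
and analogously $t_w=1+o(\de)$. Because $\sigma_j$ preserves $H$, maps $v\leftrightarrow w$, and fixes $e_1$, the $\sigma_j$-equivariance of the $x_F$-construction gives $x_v[\sigma_j K]=\sigma_j x_w[K]$, so the lemma reduces to showing $\brA{x_v[K]-x_v[\sigma_j K],e_1}=o(\de)$.

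To produce a dual witness along $e_1$, pick a maximal clique $C\supset\{1,j\}$; by the CL-property $C\cap S=\{j\}$. Assumption \eqref{dist_section} applied to $C$ gives, for each sign pattern $\varepsilon=(\varepsilon_k)_{k\in C\setminus\{1,j\}}$, points $\mu_\pm\in K^\circ$ whose projections to $\spn(C)$ are $(e_1\pm e_j+\sum_k \varepsilon_k e_k)+o(\de)$; testing $\pm\mu_\pm$ against the standard vertices of $H$ pins their components outside $C$ to size $O(\de)$. Summing $\mu_++\mu_-$ and then averaging over sign patterns yields $\bar\nu\in K^\circ$ with $\bar\nu|\spn(C)=e_1+o(\de)$ and $O(\de)$-tails. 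By \eqref{vertex_projection} the coordinates of $x_v-x_w$ in $S\setminus\{j\}$ are $o(\de)$ and those outside $S$ are $O(\de)$; combined with the $O(\de)$-tails of $\bar\nu$, the pairing $\brA{\bar\nu,x_v-x_w}$ reduces to $\brA{x_v-x_w,e_1}+o(\de)$.

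The hard part will be the final tightening: the inequality $\brA{\bar\nu,x_v}\le 1$ by itself only yields an $O(\de)$ bound, not $o(\de)$. Since $K^\circ$ is symmetric and convex, any combination $\lambda\bar\nu+(1-\lambda)(\pm\xi_v)$ remains in $K^\circ$; choosing $\lambda$ so that the combined direction inherits the exact tangent identity $\brA{\xi_v,x_v}=1+o(\de)$ in its $\R^{(v)}$-component makes the relevant inequality tight up to $o(\de)$. Applying the same tightening to $x_w$ and to the reflected body $\sigma_j K$ via equivariance, and combining the resulting one-sided estimates, should yield $\brA{x_v-x_w,e_1}=o(\de)$, as required.
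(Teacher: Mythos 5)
Your reduction to a single flipped coordinate, the observation that $A_v=\{y:\brA{c_{v^*},y}=1\}$ with $c_{v^*}\in\R^{(v)}$ (so $A_v\supset v+\R e_1$), the computation $t_v=1+o(\de)$, the reflection equivariance $x_v[\sigma_j K]=\sigma_j x_w[K]$, the choice of the maximal clique $C\supset\{1,j\}$ with $C\cap S=\{j\}$, and the ``testing against vertices'' argument for the $O(\de)$-tails are all sound, and in fact the paper's own proof of this lemma works with exactly this clique. The gap is in the construction of the dual witness. Let $\bar x=\tfrac12(x_v-x_w)\in K$; by \eqref{vertex_projection} and the Hausdorff bound, $\bar x_j=1+o(\de)$, $\bar x_i=o(\de)$ for $i\in S\setminus\{j\}$, and $\bar x_i=O(\de)$ for $i\notin S$; the quantity to bound is $\bar x_1$. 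By adding $\mu_+$ and $\mu_-$ you cancel precisely the $e_j$-component, and $\bar\nu\approx e_1$ gives only $\brA{\bar\nu,\bar x}\approx\bar x_1\le 1$, which is vacuous. The proposed tightening does not repair this: any $\nu'=\lambda\bar\nu+(1-\lambda)c_{v^*}/t_v$ has $e_j$-coefficient $(1-\lambda)\brA{c_{v^*},e_j}/t_v$, and $\brA{c_{v^*},e_j}$ is a fixed number strictly less than $1$ (it is $1/n$ for the cube). Pairing against $\bar x$ gives $\lambda\bar x_1+(1-\lambda)\brA{c_{v^*},e_j}\le 1+o(\de)$, so $\bar x_1\le\big(1-(1-\lambda)\brA{c_{v^*},e_j}\big)/\lambda+o(\de)/\lambda$; the right side is bounded away from $0$ uniformly in $\lambda\in(0,1]$, so no choice of $\lambda$ produces $o(\de)$. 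Tightness of $\brA{c_{v^*},x_v}=t_v$ cannot transfer because $c_{v^*}$ sees $e_j$ only with a partial coefficient, and because the inequality that must be sharp is against $\bar x$, not $x_v$.

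The fix is small and does make your polar route close: do \emph{not} add $\mu_-$. Averaging only the $\mu_+(\varepsilon)$ over the sign patterns $\varepsilon$ yields $\bar\mu_+\in K^\circ$ with $\bar\mu_+|\spn(C)=e_1+e_j+o(\de)$ and $O(\de)$-tails. Then
\[
1\ \ge\ \brA{\bar\mu_+,\bar x}\ =\ \bar x_1+\bar x_j+o(\de)\ =\ \bar x_1+1+o(\de),
\]
since the cross terms are $o(\de)\cdot O(\de)$ on $C\setminus\{1,j\}$, $O(\de)\cdot o(\de)$ on $S\setminus\{j\}$, and $O(\de)\cdot O(\de)$ elsewhere, all $o(\de)$. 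This gives $\bar x_1\le o(\de)$, and repeating with $-e_1+e_j$ gives $\bar x_1\ge -o(\de)$. That corrected argument is a genuinely different route from the paper's: the paper works in the section $K\cap\R^{(v^\star)}$ with the $\ell_1$-norm and must use a $\sqrt{\de}$-splitting device to force $\bar x$ (which carries $O(\de)$-mass off $\spn(C)$) into the section, whereas the polar pairing absorbs those $O(\de)$-tails into an $O(\de^2)$ error automatically. But as written, with $\bar\nu$ in place of $\bar\mu_+$, the proposal does not prove the lemma.
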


\begin{proof}
It suffices to consider the case that two vertices $v$, $w$ have different values in only one coordinate and the same values in the other coordinates; for the general case, consider a sequence of vertices $v=v_1, v_2,\ldots,v_k=w$ such that any two consecutive vertices $v_j$, $v_{j+1}$ have different values in only one coordinate. Thus, we may assume that  $\brA{v,e_j}=\brA{w,e_j}$ for any $j=1,\ldots,n-1$, and $\brA{v,e_n}\neq\brA{w,e_n}$.
Let $I=\supp(v)$. Then $v$ and $w$ can be written as $$v=e_n+\sum_{j\in I\setminus\set{n}}e_j \qquad\text{and}\qquad w=-e_n+\sum_{j\in I\setminus\set{n}}e_j.$$ Under the assumption \eqref{dist_projection}, as in \eqref{vertex_projection},  the points $x_v$, $x_w\in K$ can be expressed by 
\begin{align*}
x_v&=e_n+\sum_{j\in I\setminus\set{n}}e_j + \sum_{j\notin I}a_je_j + o(\de)\\
x_w&=-e_n+\sum_{j\in I\setminus\set{n}}e_j + \sum_{j\notin I}b_je_j + o(\de)
\end{align*}
where $|a_j|=O(\de)$ and $|b_j|=O(\de)$ follow from $|x_v-v|=O(\de)$. So, 
\begin{equation}\label{eq:midpoints_xv}
\frac{x_v-x_w}{2} = e_n + \sum_{j\notin I}\xi_j e_j + o(\de),\quad\xi_j=\frac{a_j-b_j}{2}=O(\de). 
\end{equation}
Here, we may assume that each $\xi_j$ is nonnegative; if $\xi_j<0$, then replace $e_j$ with $-e_j$.  Then we claim that 
\begin{equation}\label{eq:xij_odelta}
\xi_1=o(\de).
\end{equation}
To prove \eqref{eq:xij_odelta}, let $\bar{x}=e_n + \sum_{j\notin I}\xi_j e_j$. Then, $(1-\eps)\bar{x}\in K$ for some $\eps=o(\de)$ by \eqref{eq:midpoints_xv}. Take a maximal clique $J$ containing $\set{1,n}$ in the graph $G$ associated with $H$, and let  $v^\star=\sum_{j\in J}e_j$, which is a vertex of $H^\circ$ with support $J$.
First, if $I\cup J=\set{1,\ldots,n}$, then $\bar{x}=e_n + \sum_{j\notin I}\xi_j e_j\in\R^{(v^{\!\star}\!)}$. By \eqref{dist_section}, the $\ell_1$-norm of $(1-\eps)\bar{x}$ satisfies 
$\norm{(1-\eps)\bar{x}}_1\le 1+o(\de)$, which gives $\xi_1=o(\de)$. Thus, it remains to consider the case that $M:=\set{1,\ldots,n}\setminus(I\cup J)$ is not empty. It follows from maximality of $J$ that for every $k\in M$, there exists $j_k\in J$ with $j_k\nsim k$. Consider the point 
\begin{equation*}
y=(1-\de)\sum_{k\in M}\brR{\la\,\frac{e_{j_k}+e_n}{2}+\la_k(e_{j_k}-e_k)},
\end{equation*}
where $\la$ and all $\la_k$'s are non-negative and obtained from $\sum_{k\in M}(\la+\la_k)=1$ and $\la_k=\xi_k/(\sqrt{\de}+\de)$.
Then, $y\in K$ because it is a convex combination of points $(1-\de)\frac{e_{j_k}+e_n}{2}\in K$ and $(1-\de)(e_{j_k}-e_k)\in K$ for $k\in M$. Let $z=\sqrt{\de}y + (1-\sqrt{\de})\bar{x}$.
That is, 
\begin{align*}
z&=\sqrt{\de}(1-\de)\sum_{k\in M}\brR{\la\,\frac{e_{j_k}+e_n}{2}+\la_k(e_{j_k}-e_k)} + (1-\sqrt{\de})\Big(e_n + \sum_{j\notin I} \xi_j e_j\Big) \\
&=\brR{\sqrt{\de}(1-\de)|M|\la/2+(1-\sqrt{\de})}e_n + \sqrt{\de}(1-\de)\sum_{k\in M}(\la/2+\la_k) e_{j_k} + (\star)
\end{align*}
where 
\begin{align*}
(\star)&=-\sqrt{\de}(1-\de)\sum_{k\in M}\la_ke_k + (1-\sqrt{\de})\sum_{j\notin I} \xi_j e_j\\
&= -\sqrt{\de}(1-\de)\sum_{k\in M}\brR{\la_k-\frac{1\!-\!\sqrt{\de}}{\sqrt{\de}(1\!-\!\de)}\,\xi_k}e_k + (1\!-\!\sqrt{\de})\!\!\!\sum_{j\in J\setminus\set{n}}\!\xi_j e_j  =(1\!-\!\sqrt{\de})\!\!\!\sum_{j\in J\setminus\set{n}}\!\xi_j e_j.
\end{align*}
Thus, 
\begin{equation}\label{eq:z_expression}
z=\brR{\sqrt{\de}(1-\de)|M|\la/2+1\!-\!\sqrt{\de}}e_n + \sqrt{\de}(1-\de)\sum_{k\in M}(\la/2+\la_k) e_{j_k} + (1\!-\!\sqrt{\de})\!\!\sum_{j\in J\setminus\set{n}}\!\!\xi_j e_j.
\end{equation}
Note that $(1-\eps)z\in K\cap\R^{(v^{\!\star}\!)}$ because $(1-\eps)y,(1-\eps)\bar{x}\in K$ and $z\in\R^{(v^{\!\star}\!)}$. So, \eqref{dist_section} gives $\norm{(1-\eps)z}_1\le1+o(\de)$, that is, $\norm{z}_1\le1+o(\de)$. Since all coefficients of the terms in \eqref{eq:z_expression} are non-negative by the assumption $\xi_j\ge0$, the $\ell_1$-norm of $z$ is
\begin{align*}
\norm{z}_1 &= \sqrt{\de}(1-\de)|M|\la/2+1\!-\!\sqrt{\de} + \sqrt{\de}(1-\de)\sum_{k\in M}(\la/2+\la_k) + (1\!-\!\sqrt{\de})\!\!\sum_{j\in J\setminus\set{n}}\!\!\xi_j \\
&=1-\sqrt{\de} + \sqrt{\de}(1-\de)\sum_{k\in M}(\la+\la_k) + (1\!-\!\sqrt{\de})\!\!\sum_{j\in J\setminus\set{n}}\!\!\xi_j = 1-\de\sqrt{\de} + (1\!-\!\sqrt{\de})\!\!\sum_{j\in J\setminus\set{n}}\!\!\xi_j 
\end{align*}
Therefore, $\norm{z}_1\le  1+o(\de)$ implies
\begin{equation*}
\sum_{j\in J\setminus\set{n}}\xi_j \le \frac{\de\sqrt{\de}}{1-\sqrt{\de}} +o(\de) = O(\de\sqrt{\de})  +o(\de) = o(\de),
\end{equation*}
which completes the proof.
\end{proof}

\begin{proof}[Proof of Proposition \ref{prop:step2}]
Suppose that $K$ and $H$ satisfy \eqref{dist_projection}, \eqref{dist_section}. It follows from $\dH(K,H)=\de$ that there exist a constant $c=c(n)$ and a vertex $v$ of $H$ or $H^\circ$ with $|x_v-v|\ge c\de$. Otherwise, $|x_v-v|=o(\de)$ for every vertex $v$ of $H$ or $H^\circ$, which gives $\dH(K,H)=o(\de)$; contradiction. In addition, since $\brA{x_v,e_j}=\brA{v,e_j}+o(\de)$ for every $j\in\supp(v)$ by \eqref{vertex_projection}, there exists $j\not\in\supp(v)$ such that $\abs{\brA{x_v,e_j}}\ge c\de$. Without loss of generality, we may assume that $v$ is a vertex of $H$,
\begin{equation}
1\notin\supp(v),\quad\text{and}\quad\brA{x_v,e_1}\ge c\de.
\end{equation}
Let $I=\supp(v)$. Take a maximal clique $J$ containing $1$ in the graph $G$ associated with $H$, and let $m$ be the unique common point between $I$ and $J$. Without loss of generality, we write $J=\set{1,2,\cdots,m}$, $I\cap J=\set{m}$, and $v=\sum_{i\in I}e_i$. For $k=1,\cdots,m$, define the subset $I_k$ of $I$ by 
\begin{align*}
I_1&=\Set{i\in I: i\nsim1}\\
I_k&=\Set{i\in I\backslash(I_1\cup\cdots\cup I_{k-1}): i\nsim k},\quad\text{if }k=2,\ldots,m.
\end{align*}
The maximality of $J$ gives that for each $i\in I\setminus\set{m}$ there exists $j\in J$ that is not connected to $i$, so
\begin{equation}\label{disjoint_union_I}
I=(I_1\cup\cdots\cup I_m)\cup\set{m}.
\end{equation}
\begin{center}
\includegraphics[scale=.7,page=1]{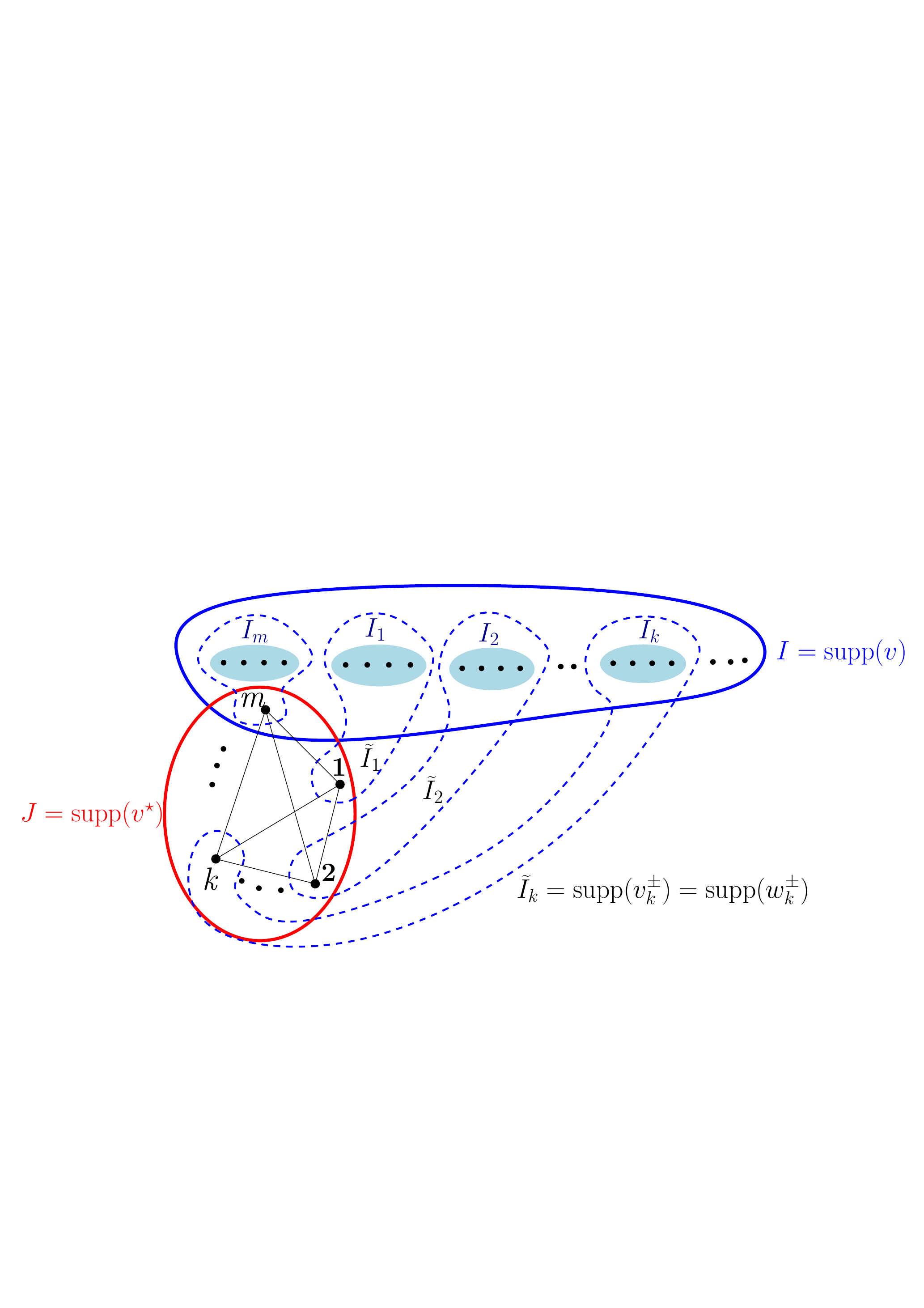}
\end{center}
Since each $I_k\cup\set{k}$ is an independent set, we can choose a maximal independent set $\widetilde{I}_k$ containing $I_k\cup\set{k}$ for $k=1,\cdots,m-1$. Consider the vertices $v_k^+$, $v_k^-$ with support $\widetilde{I}_k$ for $k=1\ldots,m-1$ defined by $$v_k^\pm=e_k-\sum_{I_k}e_i \pm\!\!\sum_{\widetilde{I}_k\setminus(I_k\cup\set{k})}\!\!e_i,$$
and the vertices $v_m^+$, $v_m^-$ with support $I$ defined by $$v_m^\pm =e_m+\!\!\!\!\sum_{I_1\cup\cdots\cup I_{m\!-\!1}}\!\!\!\!\!\!\!e_i \,\,\pm\,\sum_{I_m}\,e_i.$$
For $j=1,\ldots,m$, let 
\begin{equation}\label{eq:def_tj}
t_j = {\rm sign}\brR{\sum_{k\in J\setminus\set{j}}\brA{x_{v_k^+}\!+x_{v_k^-},\,\,e_j}}.
\end{equation}
Let $v^\star$ be the vertex of $H^\circ$ with support $J$ defined as $$v^\star=\sum_{j=1}^m t_je_j.$$ 
Consider the vertices $w_1^+,w_1^-,\ldots,w_m^+,w_m^-$ defined by
\begin{align*}
w_k^\pm&=t_ke_k-\sum_{I_k}e_i \,\,\pm\!\!\!\sum_{\widetilde{I}_k\setminus(I_k\cup\set{k})}\!\!\!\!e_i \quad\text{for }k=1,\ldots,m\!-\!1 \\
w_m^\pm&=t_me_m+\!\!\!\!\sum_{I_1\cup\cdots\cup I_{m\!-\!1}}\!\!\!\!\!\!\!e_i \,\,\pm\,\sum_{I_m}\,e_i.
\end{align*}
Since $j\in\supp(w_j^\pm)$ for each $j$, \eqref{vertex_projection} implies
\begin{equation}\label{eq:wj_ej}
\brA{x_{w_j^\pm},e_j}=\brA{w_j^\pm,e_j}+o(\de)=t_j+o(\de)\quad\text{for $j=1,\ldots,m$}.
\end{equation}
Note that $w_k^\pm$, $v_k^\pm$ have the same support and may have different values only in the $k$-the coordinate. If $k\in J$ is different from $j$, then $j\sim k$ and by Lemma \ref{lem:step2} we get
\begin{equation*}
\brA{x_{w_k^\pm},e_j}=\brA{x_{v_k^\pm},e_j}+o(\de) \quad\text{for each }k\in J\setminus\set{j}.
\end{equation*}
Together with \eqref{eq:def_tj}, it implies
\begin{equation}\label{eq:t_sign}
\sum_{k\in J\setminus\set{j}}\brA{x_{w_k^+}+x_{w_k^-},\,t_je_j}=t_j\sum_{k\in J\setminus\set{j}}\brA{x_{v_k^+}+x_{v_k^-},\,\,e_j}+o(\de) \ge o(\de). 
\end{equation}
From the construction of $w_k^\pm$'s and $v^\star$, we can see that
\begin{equation}\label{relation_with_dual}
\sum_{k=1}^m \frac{w_k^+ + w_k^-}{2} = v^\star.
\end{equation}
Let $F$ be the dual face of $v^\star$, i.e., $F=(v^\star)^*=\set{x\in H:\brA{x,v^\star}=1}$. Consider the point $$y=\frac{1}{m}\sum_{k=1}^m \frac{x_{w_k^+}+x_{w_k^-}}{2},$$ which belongs to $K$. To get a lower bound of $\brA{y,v^\star}$, let us split it into three parts as follows:
\begin{align*}
\brA{y,v^\star} &= \brA{\frac1m\sum_{k=1}^m \frac{x_{w_k^+}+x_{w_k^-}}{2},\sum_{j=1}^m t_je_j}= \frac1m\sum_{j=1}^m \sum_{k=1}^m \brA{\frac{x_{w_k^+}+x_{w_k^-}}{2},t_je_j} \\
&= \quad{\rm(I)}\quad+\quad{\rm(II)}\quad+\quad{\rm(III)}
\end{align*}
where 
\begin{align*}
{\rm(I)} &= \frac1m\sum_{j=1}^m \brA{\frac{x_{w_j^+}+x_{w_j^-}}{2},t_je_j}, \\
{\rm(II)} &= \frac{t_1}{m}\sum_{k\in J\setminus\set{1}} \brA{\frac{x_{w_k^+}+x_{w_k^-}}{2},e_1},\\
{\rm(III)} &= \frac1m\sum_{j=2}^m t_j\!\!\!\sum_{k\in J\setminus\set{j}} \brA{\frac{x_{w_k^+}+x_{w_k^-}}{2},e_j}.
\end{align*}
The first term satisfies ${\rm(I)}=\frac1m\sum_{j=1}^mt_j^2+o(\de)=1+o(\de)$ by \eqref{eq:wj_ej}, and the third term ${\rm(III)}$ is non-negative or $o(\de)$-small by \eqref{eq:t_sign}. For ${\rm(II)}$, note that $1\not\in\supp(w_k^\pm)$ for $k=2,\ldots,m$ because $k\sim1$, and every element of $I\setminus I_1$ is connected to $1$ due to the choice of $I_1$. The vertices $w_m^\pm$ and $v$ have the same support $I$ and may have different values only in the coordinates $i\in I_m\cup\set{m}\subset I\setminus I_1$. Thus, Lemma \ref{lem:step2} gives
\begin{equation}\label{eq:e1_xwm}
\brA{x_{w_m^\pm},e_1}=\brA{x_v,e_1}+o(\de)
\end{equation}
For $k=2,\ldots,m-1$, the vertices $w_k^+$ and $-w_k^-$ have the same support $\widetilde{I}_k$ and different values only in the coordinates $i\in I_k\cup\set{k}\subset I\setminus I_1$. By Lemma \ref{lem:step2}, we get
\begin{equation}\label{eq:e1_xwk}
\brA{x_{w_k^+},e_1}=\brA{x_{-w_k^-},e_1}+o(\de)\quad\text{for }k=2,\ldots,m-1,
\end{equation}
which gives $\brA{x_{w_k^+}+x_{w_k^-},e_1}=o(\de)$ by symmetry of $K$.
Thus, by \eqref{eq:e1_xwm} and  \eqref{eq:e1_xwk}, $$\sum_{k=2}^m \brA{\frac{x_{w_k^+}+x_{w_k^-}}{2},e_1}=\brA{x_v,e_1}+o(\de)\ge c\de+o(\de).$$ It gives $t_1=1$ and hence the second term ${\rm(II)}$ is at least $\frac{c}{m}\de+o(\de)$. Finally we have 
\begin{equation}\label{delta_above}
\brA{y,v^\star}\ge 1+\frac{c}{m}\,\de+o(\de).
\end{equation}
On the other hand, $y$ is a point of $K$ which is $O(\de)$-close to the point $\frac1m v^\star$ on the facet $F$ by \eqref{relation_with_dual}. In fact, $\frac1m v^\star$ is an interior point of  $F$. Indeed, 
\begin{align*}
\frac1m v^\star=\frac1{\abs{J}}\sum_{j\in J}t_je_j =\frac1{\abs{J}}\sum_{j\in J}\Big(\frac1{\abs{V_j}}\sum_{v\in V_j}\frac{v+\bar{v}}2\Big) =\sum_{j\in J}\sum_{v\in V_j}\frac1{\abs{J}\abs{V_j}}\,v,
\end{align*}
where $V_j=\set{v\in\ext(F):j\in\supp(v)}$ for $j\in J$ and $\bar{v}=2t_je_j-v\in V_j$ for $v\in V_j$. So, $\frac1m v^\star$ can be written as $\frac1m v^\star=\sum_{v\in\ext(F)}\la_v v$ where all $\la_v$'s are positive and satisfy $\sum\la_v=1$. It implies $\frac1m v^\star$ is an interior point of $F$. In addition, under the assumption \eqref{dist_section}, the point $\frac1m v^*$ is $o(\de)$-almost on the boundary of $K$ because $\frac1m v^\star\in \partial B_1^{(v^{\!\star}\!)}$. This contradicts \eqref{delta_above}. Indeed, write $y=y_0+\frac{t}{m}v^\star$ for $y_0\in F$ and $t>0$, and take a point $z$ on the boundary of $F$ meeting with the ray from $y_0$ to $\frac1m v^\star$. Then, $(1-c'\de)z\in K$ for some $c'>0$, and $\frac1m v^\star=(1-\mu)y_0+\mu z$ for some $\mu=O(\de)$ because $\frac1m v^\star$ is an interior point of the facet $F$. Then, the point 
\begin{equation}\label{eq:convex_combin_y_z}
\frac{(1-\mu)y +\frac{\mu}{1-c'\de}(1-c'\de)z}{1-\mu+\frac{\mu}{1-c'\de}}=\frac{1+(1-\mu)t}{1-\mu+\frac{\mu}{1-c'\de}}\cdot\frac1m v^\star = \big(1+(1-\mu)t+O(\de^2)\big)\cdot\frac1m v^\star
\end{equation}
belongs to $K\cap\R^{(v^{\!\star}\!)}$ because it is a convex combination of two points $y$ and $(1-\de)z$ in $K$. So, the $\ell_1$-norm of \eqref{eq:convex_combin_y_z} is at most $1+o(\de)$ by the assumption \eqref{dist_section}, which gives $t=o(\de)$. It implies $\brA{y,v^\star}=1+t=1+o(\de)$ which contradicts \eqref{delta_above} and completes the proof.
\end{proof}

\begin{proposition}\label{prop:step3}
Let $H$ be a standard Hanner polytope in $\R^n$ and $K$ a symmetric convex body satisfying $B_1^n\subset K\subset B_\infty^n$. If $\dH(K|\R^{(v)},B_\infty^{(v)})=\de$ for small $\de>0$ and some vertex $v$ of $H$, then $$|K||K^\circ|\ge V(X)V(X^*)+c(n)\de,$$ where $X=(x\sub{F})$, $X^*=(x\sub{F^*})$ are the $x\sub{F}$-points obtained from $K$, $K^\circ$ as in Definition \ref{def:AF}.
\end{proposition}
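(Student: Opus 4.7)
The plan is to establish $|K^\circ|-V(X^*)\ge c(n)\de$; combined with the uniform lower bound $V(X)\ge c(n)>0$ coming from $B_1^n\subset K$ (so that each simplex $X_\F$ has volume bounded below by a constant multiple of $|C_\F|$), this gives
\[
|K||K^\circ|\ge V(X)V(X^*)+V(X)\bigl(|K^\circ|-V(X^*)\bigr)\ge V(X)V(X^*)+c(n)\de.
\]

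First I would polar-dualize the projection gap. In the subspace $\R^{(v)}$ one has $(K|\R^{(v)})^\circ=K^\circ\cap\R^{(v)}$, and $B_1^n\subset K\subset B_\infty^n$ gives $B_1^{(v)}\subset K^\circ\cap\R^{(v)}\subset B_\infty^{(v)}$. I first argue that some \emph{vertex} $u$ of the cube $B_\infty^{(v)}$ sits at distance $\ge c(n)\de$ from $K|\R^{(v)}$: if every corner lay within $o(\de)$, convexity would force $K|\R^{(v)}$ itself to be $o(\de)$-close to $B_\infty^{(v)}$, contradicting the hypothesis --- and the missed region cannot concentrate near facets or edges because $K|\R^{(v)}$ already contains the cross-polytope $B_1^{(v)}$. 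The separating hyperplane at $u$ then yields a unit vector $\eta\in\R^{(v)}$ with $h_{K|\R^{(v)}}(\eta)\le\|\eta\|_1-c\de$, producing the excess polar point
\[
p^*:=\frac{\eta}{\|\eta\|_1-c\de}\in K^\circ\cap\R^{(v)}
\]
at distance $\ge c(n)\de$ from $B_1^{(v)}$.

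Next I convert this into a volume gain via $\ell_1$-splitting. By symmetry and convexity, $K^\circ$ contains $\conv(B_1^n\cup\{\pm p^*\})$. Writing $\R^n=\R^{(v)}\oplus(\R^{(v)})^\perp$, the cross-polytope splits as $B_1^n=B_1^{(v)}\oplus_1 B_1'$ with $B_1':=B_1^n\cap(\R^{(v)})^\perp$, and since $p^*\in\R^{(v)}$ the convex hull splits accordingly:
\[
\conv(B_1^n\cup\{\pm p^*\})=\conv\bigl(B_1^{(v)}\cup\{\pm p^*\}\bigr)\oplus_1 B_1'.
\]
Applying the $\ell_1$-sum volume identity (used implicitly in Lemma \ref{centroid_formula}) and bounding the inner gain by the volume of a pyramid of height $\Omega(\de)$ over a facet of $B_1^{(v)}$ yields $|K^\circ|-|B_1^n|\ge c(n)\de$.

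The final and hardest stage is upgrading this to $|K^\circ|-V(X^*)\ge c(n)\de$, since in general $V(X^*)$ may exceed $|B_1^n|$. The crucial structural observation is the identity $H^\circ\cap\R^{(v)}=(H|\R^{(v)})^\circ=(B_\infty^{(v)})^\circ=B_1^{(v)}$, so the native section of $H^\circ$ by $\R^{(v)}$ is exactly the cross-polytope; together with Proposition \ref{prop:construction} controlling $|x_{F^*}-c_{F^*}|=O(\de)$, this forces the section $(\bigcup X_\F^*)\cap\R^{(v)}$ to stay within an $O(\de)$-neighbourhood of $B_1^{(v)}$, whereas the cap of $K^\circ$ extends $\Omega(\de)$ further. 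A slab integration in directions transverse to $\eta$ converts this radial gap into a genuine volume gap. The principal obstacle here is that the dual facet tip $x_{v^*}$ is itself displaced by $\Omega(\de)$ along the very ray through $p^*$, so the gain must be isolated in a transverse slab not reached by any dual simplex; making this rigorous requires a careful case analysis using the combinatorial dictionary (vertices of $H\leftrightarrow$ maximal independent sets, facets of $H^\circ\leftrightarrow$ maximal cliques) recalled at the start of this section, together with stability arguments of the type of Lemma \ref{lem:step2}.
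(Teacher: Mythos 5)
Your proposal commits at the outset to proving the unconditional inequality $|K^\circ|-V(X^*)\ge c(n)\de$, and this is where the argument breaks: that inequality is simply false under the hypotheses of the proposition. Take $H=B_\infty^2$, $v=(1,1)$ (so $\R^{(v)}=\R^2$), and $K=B_\infty^2\cap\set{|x_1+x_2|\le 2-2\de}$; then $B_1^2\subset K\subset B_\infty^2$ and $\dH(K|\R^{(v)},B_\infty^{(v)})=\sqrt2\,\de$. Here $K^\circ=\conv\set{\pm e_1,\pm e_2,\pm\tfrac{1}{2-2\de}(e_1+e_2)}$, and one checks directly from Definition \ref{def:AF} that $x_{E_1^*}=e_1$, $x_{E_2^*}=e_2$ and $x_{v^*}=\tfrac{1}{2-2\de}(e_1+e_2)$, so the union $\bigcup_\F X^*_{\F^*}$ is exactly $K^\circ$ and $V(X^*)=|K^\circ|$: there is no dual-side gain whatsoever, and the whole $\Theta(\de)$ excess lives on the primal side, $|K|-V(X)=\Theta(\de)$. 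Your own text flags the obstruction (the tip $x_{v^*}$ is displaced by $\Omega(\de)$ along the very ray through your point $p^*$ — in the example $p^*$ \emph{is} $x_{v^*}$), but the proposed repair of ``isolating the gain in a transverse slab'' cannot succeed, because there is nothing to isolate. The preliminary steps (finding a cube vertex at distance $\ge c\de$ from $K|\R^{(v)}$, dualizing to get $p^*\in K^\circ\cap\R^{(v)}$ outside $B_1^{(v)}$ by $c\de$) are fine but do not touch this crux; also note that the intermediate bound $|K^\circ|-|B_1^n|\ge c\de$ is of no use for general $H$, since $V(X^*)\approx|H^\circ|$ can exceed $|B_1^n|$ by a constant.

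The paper avoids this trap with a dichotomy rather than a one-sided estimate. It first fixes $\eps=1-\max\set{\brA{c_F,c_{G^*}}:F\not\subset G}>0$ and builds the witness point $x^\star=(1-\de)x_{E_1^*}+(1+\eps)\de\, x_{v^*}=(1-\de)e_1+(1+\eps)\de\, x_{v^*}$, chosen so that $\brA{x_F,x^\star}\le 1-\eps^2\de+O(\de^2)$ for \emph{every} face $F$ (using $\brA{x_v,e_1}\le 1-\de$ for the face $F=v$ and the separation $\eps$ for all other faces). Hence $x^\star$ lies in the polar of $(1+\eps^2\de)\bigcup_\F X_\F$. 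Then: either $K\subset(1+\eps^2\de)\bigcup_\F X_\F$, in which case $x^\star\in K^\circ$ yet sticks out of the dual star body by $\eps\de$ over the simplex of a flag containing $v$ and $E_1$, yielding $|K^\circ|\ge V(X^*)+\eps\de|X^*_{\mathbb G}|$; or $K\not\subset(1+\eps^2\de)\bigcup_\F X_\F$, in which case $|K|\ge V(X)+\eps^2\de|X_\F|$ directly. Either branch gives $|K||K^\circ|\ge V(X)V(X^*)+c(n)\de$. Some mechanism of this kind — allowing the $\de$-gain to land on whichever side actually has it — is indispensable, and it is the missing idea in your proposal.
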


\begin{proof}
Without loss of generality, we may assume that $(1-\de)B_\infty^{(v)} \subset K|\R^{(v)}\subset B_\infty^{(v)}$ and $(1-\de)v$ is on the boundary of $K|\R^{(v)}$. In addition, taking an appropriate coordinate system, we may assume that $\R^{(v)}=\spn\set{e_1,\ldots,e_m}=\R^m$ and $\brA{x_v,e_1}=\min_{1\le j\le m} |\brA{x_v,e_j}|$. Then, $$\brA{x_v,e_1}\le 1-\de.$$ Indeed, if $\brA{x_v,e_j}>1-\de$ for all $j=1,\ldots,m$, then the orthogonal projection of $x_v$ to $\R^m$ is in the interior of $v+\de B_\infty^m$ by Lemma \ref{lem:AF_vertex}, but it is impossible because $K|\R^m\supset (1-\de)B_\infty^m$ and $(1-\de)v$ is on the boundary of $K|\R^m$.

Let $\eps=1-\max\Set{\brA{c\sub{F},c\sub{G^*}}:F\not\subset G,\text{ faces of } H}$. Then $\eps>0$. Indeed, since $c\sub{F}\in{\rm int}(F)$, the centroid can be written as $c\sub{F}=\sum_v\la_v v$ where $v$ runs over all vertices of $H$ contained in $F$, and all $\la_v$'s are positive numbers satisfying $\sum_v \la_v=1$. So, if $\brA{c\sub{F},c\sub{G^*}}=1$ then we have $\brA{v,w}=1$ for any vertex $v$ in $F$ and any vertex $w$ in $G^*$. It implies $G^*\subset F^*$; hence $F\subset G$. Thus, $\max\Set{\brA{c\sub{F},c\sub{G^*}}:F\not\subset G}<1$.

Let $E_1=\set{y\in H:\brA{y,e_1}=1}$ be the face of $H$ with centroid $e_1$. Then, $v\subset E_1$, and $x\sub{E_1^*}=e_1$ because $K\subset B_\infty^n$ and $e_1\in K$.  Consider the point 
\begin{align*}
x^\star&=(1-\de)x\sub{E_1^*}+(1+\eps)\de x\sub{v^*}\\
&=(1-\de)e_1+(1+\eps)\de x\sub{v^*},
\end{align*}
where $v^*$ is the dual face of (a zero-dimensional face) $v$.
Then 
\begin{align*}
\brA{x_v,x^\star} &= (1-\de)\brA{x_v,e_1} +(1+\eps)\de\\
 &\le  (1-\de)^2 +(1+\eps)\de =1-(1-\eps)\de +\de^2
\end{align*}
and, for any face $F$ of $H$ different from $v$,
\begin{align*}
\brA{x\sub{F},x^\star} &= (1-\de)\brA{x\sub{F},e_1} +(1+\eps)\de\brA{x\sub{F},x\sub{v^*}}\\
&\le  (1-\de) +(1+\eps)\de\brS{\brA{c\sub{F},c\sub{v^*}}+O(\de)}\\
&\le  (1-\de) +(1+\eps)\de(1-\eps)+O(\de^2) = 1-\eps^2\de +O(\de^2).
\end{align*}
Therefore, for small $\de>0$, $$x^\star\in\brS{(1+\eps^2\de)\brR{\bigcup\nolimits_\F X_\F}}^\circ$$ where $X_\F$ is the simplex defined from $X=(x\sub{F})$ as in \eqref{def_simplex}. 

First, consider the case that $K\subset(1+\eps^2\de)\brR{\bigcup_\F X_\F}$. Then $x^\star\in K^\circ$. In addition, the point $x^\star$ is outside of the polytope $\bigcup_\F X^*_{\F^*}$. If ${\mathbb G}=\set{F^0,\ldots,F^{n-1}}$ is a flag over $H$ containing $v$ and $E_1$, then the simplex by $x\sub{(F^{0})^*},\ldots,x\sub{(F^{n-1})^*}$ and $x^\star$ is contained in $K^\circ$, not in the polytope $\bigcup_\F X^*_{\F}$. So 
\begin{align*}
\abs{K^\circ} &\ge V(X^*)+\abs{\conv\set{x^\star,x\sub{(F^{0})^*},\ldots,x\sub{(F^{n-1})^*}}}\\
&= V(X^*)+\eps\de\abs{X^*_{\mathbb G}},
\end{align*}
which implies
\begin{align*}
\abs{K}\abs{K^\circ} &\ge \abs{K}V(X^*)+\eps\abs{K}\abs{X^*_{\mathbb G}}\de\\
&\ge V(X)V(X^*)+\Big(\frac12\cdot\eps\cdot\abs{B_1^n}\cdot\frac{\abs{B_1^n}}{n!2^n}\Big)\de.
\end{align*}
On the other hand, if $K\not\subset(1+\eps^2\de)\brR{\bigcup_\F X_\F}$, then there exist a flag $\F$ and a point $x\in\conv\set{x\sub{F}:F\in\F}$ such that $(1+\eps^2\de)x\in K$. Since it gives $\abs{K}\ge V(X)+\eps^2\de\abs{X_\F}$,
\begin{align*}
\abs{K}\abs{K^\circ} &\ge V(X)\abs{K^\circ}+\eps^2\abs{X_\F}\abs{K^\circ}\de\\
&\ge V(X)V(X^*)+\Big(\frac12\cdot\eps^2\cdot\frac{\abs{B_1^n}}{n!2^n}\cdot\abs{B_1^n}\Big)\de.
\end{align*}
Consequently, we have $\abs{K}\abs{K^\circ} \ge V(X)V(X^*)+c(n)\de$ where $c(n)=\frac{\eps^2\abs{B_1^n}^2}{n!2^{n+1}}=\frac{2^{n-1}\eps^2}{n!^3}$.
\end{proof}

\begin{proof}[\bf Proof of Theorem \ref{thm:delta_gaps}]
Let $H$ be a standard Hanner polytope in $\R^n$, and $K$ a symmetric convex body in $\R^n$ with $\dH(K,H)=\de$ for small $\de>0$. As mentioned in Section \ref{general_construction}, note that $V(X)$ can be viewed as the volume of a star-shaped, not necessary convex, polytope $P=\bigcup_\F X_\F$.. First, consider the case that the (Hausdorff) distance between $K$ and $P$ is comparable to $\de$. Then, there exist a flag $\F$ and a point $x\in\conv\set{x\sub{F}:F\in\F}$ with $(1+c\de)x\in K$. The convex hull of $(1+c\de)x$ and all $x\sub{F}$ for $F\in\F$ is contained in $K\setminus{\rm int}(P)$, and its volume is $c\de$ times the volume of $X_\F$ because the distance between $x$ and $(1+c\de)x$ is $c\de$ times the distance between $x$ and the origin. Thus, in this case
\begin{align*}
\abs{K}\abs{K^\circ} &\ge \brR{V(X)+c\de\abs{X_\F}}\,V(X^*) \\
&\ge V(X)V(X^*)+\frac{c\abs{B_1^n}^2}{n!2^n}\,\de.
\end{align*}
To prove the other case $\dH(K,P)=o(\de)$, we apply Propositions \ref{prop:step1},\ref{prop:step2},\ref{prop:step3}. So, there exists a polytope $\tilde{K}$ with $\dBM(\tilde{K},\conv(P))=1+O(\de^2)$ such that $$|\tilde{K}||\tilde{K}^\circ|\ge V(\tilde{X})V(\tilde{X}^*)+c(n)\de,$$
where $\tilde{X}=(x\sub{F})$, $\tilde{X}^*=(x\sub{F^*})$ are the sets of the $x\sub{F}$-points obtained from $\tilde{K}$, $\tilde{K}^\circ$ as in Definition \ref{def:AF}.
Moreover, we get $\dBM(\tilde{K},K)\le\dBM(\tilde{K},P)\cdot\dBM(K,P)=1+o(\de)$ in this case, which complete the proof.
\end{proof}

\begin{proof}[\bf Proof of Main theorem]
After taking a linear transformation and a small $o(\de)$-perturbation on $K$, by Theorem \ref{thm:delta_gaps} we have $$\abs{K}\abs{K^\circ} \ge  V(X)V(X^*)+c(n)\de.$$ Propositions \ref{prop:vol_difference} and Corollary \ref{cor:differential_zero} imply $$|V(X)-V(Y)|=O(\de^2)=|V(X^*)-V(Y^*)|.$$ Thus
\begin{align*}
\abs{K}\abs{K^\circ} &\ge  V(X)V(X^*)+c(n)\de\\
 &\ge V(Y)V(Y^*)+c'(n)\de.
\end{align*}
Finally, since $ V(Y)V(Y^*)\ge V(C)V(C^*)=|H||H^\circ|$ by Propositions \ref{prop:split_equal_volumes} and \ref{prop:equal_volumes}, we have $$\PP(K)\ge\PP(H)+c(n)\de.$$
\end{proof}





\noindent J. Kim: Department of Mathematics, Kent State
University, Kent, OH 44242, USA.\newline Email:  jkim@math.kent.edu

\end{document}